\newtheorem{thm}[equation]{Theorem}
\newtheorem{cor}[equation]{Corollary}
\newtheorem{lem}[equation]{Lemma}
\newtheorem{prop}[equation]{Proposition}
\theoremstyle{definition}
\newtheorem{defn}[equation]{Definition}
\newtheorem{exa}[equation]{Example}
\newtheorem{rem}[equation]{Remark}
\newtheorem{nota}[equation]{Notation}
\numberwithin{equation}{section}
\newcommand{\recht}[1]{\textnormal{#1}}
\newcommand\cat\textsf
\DeclareRobustCommand{\VAN}[3]{#2}
\begin{document}

\title{The zeta function of stacks of $G$-zips and truncated Barsotti--Tate groups}
\author{Milan Lopuhaä-Zwakenberg}
\date{\today}

\maketitle

\tableofcontents

\section{Introduction}

Throughout this article, let $p$ be a prime number. Over a field $k$ of characteristic $p$, the truncated Barsotti--Tate groups of level 1 (henceforth BT${}_1$) were first classified in \cite{kraft1975}. The main examples of BT${}_1$s come from $p$-kernels $A[p]$ of abelian varieties $A$ over $k$. As such, these results (independently obtained) were used in \cite{oort2001} to define a stratification on the moduli space of polarised abelian varieties. In \cite{moonen2001} the first step was made towards generalising this to Shimura varieties of PEL type; this corresponds to Barsotti--Tate groups of level 1 with the action of a fixed semisimple $\mathbb{F}_p$-algebra and/or a polarisation. The classification of these BT${}_1$s with extra structure over an algebraically closed field $\bar{k}$ turned out to be related to the Weyl group of an associated reductive group over $\bar{k}$. These BT${}_1$s with extra structure were then generalised in \cite{moonenwedhorn2004} to so-called \emph{$F$-zips}, that generalise the linear algebra objects that arise when looking at the Dieudonné modules corresponding to BT${}_1$s. Over an algebraically closed field the classification of these $F$-zips was also related to the Weyl group of a certain reductive group. In \cite{pinkwedhornziegler2011} and \cite{pinkwedhornziegler2015} this was again generalised to so-called \emph{$\hat{G}$-zips}, taking the reductive group $\hat{G}$ as the primordial object. For certain choices of $\hat{G}$ these $\hat{G}$-zips correspond to $F$-zips with some additional structure. Again their classification over an algebraically closed field is expressed in terms of the Weyl group of $\hat{G}$.\\

These classifications suggest two possible directions for further research. First, one could try to study $\hat{G}$-zips over non-algebraically closed fields; the first step would then be to understand the classification over finite fields. Another direction would be to study BT${}_n$s for general $n$, either over finite fields or over algebraically closed fields. One may approach both these problems by looking at their moduli stacks. For a reductive group $\hat{G}$ over a finite field $k$, a cocharacter $\chi\colon \mathbb{G}_{\recht{m},k'} \rightarrow \hat{G}_{k'}$ defined over some finite extension $k'$ of $k$, and a subgroup scheme $\Theta \subset  \pi_0(\hat{G}_{k'})$ one can consider the stack $\hat{G}\textrm{-}\mathsf{Zip}^{\chi,\Theta}_{k'}$ of $\hat{G}$-zips of type $(\chi,\Theta)$ (see section \ref{sectiegzips}); it is an algebraic stack of finite type over $k'$. Similarly, for two nonnegative integers $h \geq d$ one can consider the stack $\mathsf{BT}_{n}^{h,d}$ of truncated Barsotti--Tate groups of level $n$, height $h$ and dimension $d$; this  is an algebraic stack of finite type over $\mathbb{F}_p$ (see \cite[Proposition 1.8]{wedhorn2001}). One way to study these stacks is via their \emph{zeta function}. For an algebraic stack of finite type $\mathsf{X}$ over a finite field $\mathbb{F}_q$ its zeta function is defined to be the element of $\mathbb{Q}\llbracket t \rrbracket$ given by
$$Z(\mathsf{X},t) = \recht{exp}\left(\sum_{v \geq 1} \frac{q^v}{v} \sum_{x \in [\mathsf{X}(\mathbb{F}_{q^v})]} \frac{1}{\#\recht{Aut}(x)}\right),$$
where $[\mathsf{X}(\mathbb{F}_{q^v})]$ denotes the set of isomorphism classes of the category $\mathsf{X}(\mathbb{F}_{q^v})$. The zeta function is related to the cohomology of $l$-adic sheaves on $\mathsf{X}$ (see \cite{behrend1993} and \cite{sun2012}). As a power series in $t$, it defines a meromorphic function that is defined everywhere (as a holomorphic map $\mathbb{C} \rightarrow \mathbb{P}^1(\mathbb{C})$), but it is not necessarily rational; the reason for this is that for stacks, contrary to schemes, the $l$-adic cohomology algebra is in general not finite dimensional (see \cite[7.1]{sun2012}).\\

The aim of this article is to calculate the zeta functions of stacks of the form $\hat{G}\textrm{-}\mathsf{Zip}^{\chi,\Theta}_{k'}$ and $\mathsf{BT}_{n}^{h,d}$. The results are stated below. In the formulation of Theorem \ref{maintheorem1}, the set $\Xi^{\chi,\Theta}$ classifies the set of isomorphism classes in $\hat{G}\textrm{-}\mathsf{Zip}^{\chi,\Theta}(\bar{\mathbb{F}}_q)$; this classification turns out to be related to the Weyl group of $\hat{G}$ (see Proposition~\ref{classificatie}). For a $\xi \in \Xi^{\chi,\Theta}$, let $a(\xi)$ be the dimension of the automorphism group of the corresponding object in $\hat{G}\textrm{-}\mathsf{Zip}^{\chi,\Theta}(\bar{\mathbb{F}}_q)$, and let $f(\xi)$ be the minimal integer $f$ such that this object has a model over $\mathbb{F}_{q^f}$. It turns out that $\Xi^{\chi,\Theta}$ has a natural action of $\Gamma := \recht{Gal}(\bar{\mathbb{F}}_q/\mathbb{F}_q)$, and that the functions $a$ and $f$ are $\Gamma$-invariant. In the formulation of Theorem \ref{maintheorem2} the notation is the same, applied to the group $\recht{GL}_{h,\mathbb{F}_p}$ (with suitable $\chi$ and $\Theta$).

\begin{thm} \label{maintheorem1}
Let $q_0$ be a power of $p$, and let $\hat{G}$ be a reductive group over $\mathbb{F}_{q_0}$. Let $q$ be a power of $q_0$, let $\chi\colon \mathbb{G}_{\recht{m},\mathbb{F}_q} \rightarrow \hat{G}_{\mathbb{F}_q}$ be a cocharacter, and let $\Theta$ be a subgroup scheme of $\pi_0(\recht{Cent}_{\hat{G}_{\mathbb{F}_q}}(\chi))$. Let $\Xi^{\chi,\Theta}$ and $\Gamma$ be as in section \ref{sectiegzips} and $f,a\colon \Gamma \backslash \Xi^{\chi,\Theta} \rightarrow \mathbb{Z}_{\geq 0}$ be as in Notation \ref{nota}. Then
$$Z(\hat{G}\textrm{-}\mathsf{Zip}^{\chi,\Theta}_{\mathbb{F}_q},t) = \prod_{\bar{\xi} \in \Gamma \backslash \Xi^{\chi,\Theta}} \frac{1}{1-(q^{-a(\bar{\xi})}t)^{f(\bar{\xi})}}.$$
\end{thm}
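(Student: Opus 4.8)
The plan is to reduce the claimed identity to a formula for the stacky point counts, and then to prove that formula by classifying the $\mathbb{F}_{q^v}$-forms of each geometric isomorphism class of $\hat{G}$-zips. Writing $N_v:=\sum_{x\in[\hat{G}\textrm{-}\mathsf{Zip}^{\chi,\Theta}(\mathbb{F}_{q^v})]}\frac{1}{\#\Aut(x)}$ for the groupoid cardinality, the definition of the zeta function gives $Z(\hat{G}\textrm{-}\mathsf{Zip}^{\chi,\Theta}_{\mathbb{F}_q},t)=\exp\bigl(\sum_{v\ge 1}\frac{t^v}{v}N_v\bigr)$. On the other hand, using $-\log\bigl(1-(q^{-a}t)^{f}\bigr)=\sum_{m\ge 1}\frac1m(q^{-a}t)^{fm}$ and collecting coefficients of $t^v$, the asserted product equals $\exp\bigl(\sum_{v\ge1}\frac{t^v}{v}\sum_{\bar\xi:\,f(\bar\xi)\mid v}f(\bar\xi)q^{-a(\bar\xi)v}\bigr)$. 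Hence the theorem is equivalent to the identity
\begin{equation*}
N_v \;=\; \sum_{\substack{\bar\xi\in\Gamma\backslash\Xi^{\chi,\Theta}\\ f(\bar\xi)\mid v}} f(\bar\xi)\,q^{-a(\bar\xi)v}\qquad(v\ge 1),\tag{$\ast$}
\end{equation*}
and everything below is aimed at proving $(\ast)$.

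To compute $N_v$ I would partition the groupoid $\hat{G}\textrm{-}\mathsf{Zip}^{\chi,\Theta}(\mathbb{F}_{q^v})$ according to the isomorphism class $\xi(x)\in\Xi^{\chi,\Theta}$ of the base change $x_{\bar{\mathbb{F}}_q}$; this is legitimate by Proposition~\ref{classificatie}, and it is $\Gamma$-equivariant. No object over $\mathbb{F}_{q^v}$ lies over $\xi$ unless $\sigma^v$ fixes $\xi$; conversely $\xi$ has a model over $\mathbb{F}_{q^m}$ as soon as $\sigma^m$ fixes it, since the descent gerbe is banded by a smooth group and $\mathrm{cd}(\mathbb{F}_{q^m})\le 1$ --- this matching of $f$ with the $\Gamma$-orbit size is what is recorded in §\ref{sectiegzips}/Notation~\ref{nota}. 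Consequently "$\xi$ is defined over $\mathbb{F}_{q^v}$" is equivalent to $f(\xi)\mid v$, and because $\sigma$ permutes each $\Gamma$-orbit $\bar\xi$ transitively, for a given $\bar\xi$ either all of its $f(\bar\xi)$ members are defined over $\mathbb{F}_{q^v}$ or none of them are; in the first case they all have automorphism groups of dimension $a(\bar\xi)$. Thus $(\ast)$ reduces to the local statement: for each $\xi$ with a chosen model $x_0$ over $\mathbb{F}_{q^v}$,
\begin{equation*}
\sum_{\substack{x\in[\hat{G}\textrm{-}\mathsf{Zip}^{\chi,\Theta}(\mathbb{F}_{q^v})]\\ x_{\bar{\mathbb{F}}_q}\cong\xi}}\frac{1}{\#\Aut(x)}\;=\;q^{-a(\xi)v}.\tag{$\ast\ast$}
\end{equation*}

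For $(\ast\ast)$ I would invoke the standard theory of forms: with $A:=\underline{\Aut}_{\mathbb{F}_{q^v}}(x_0)$ the (smooth) automorphism group scheme, the objects over $\mathbb{F}_{q^v}$ with base change $\cong\xi$ are classified by $H^1(\mathbb{F}_{q^v},A)$ via $x\mapsto\underline{\Isom}(x_0,x)$, the automorphism group of the twist $x_c$ attached to a class $c$ being the inner twist $A^c$; so the left-hand side of $(\ast\ast)$ is $\sum_{[c]\in H^1(\mathbb{F}_{q^v},A)}\frac{1}{\#A^c(\mathbb{F}_{q^v})}$. The decisive structural input is that $A$ sits in an extension $1\to V\to A\to S\to 1$ with $V=A^\circ$ connected unipotent and $S=\pi_0(A)$ finite. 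Since $\mathbb{F}_{q^v}$ is perfect, every connected unipotent group over it is split, so each form $V^c$ has $\#V^c(\mathbb{F}_{q^v})=q^{v\dim V}$ and $H^i(\mathbb{F}_{q^v},V^c)=0$ for $i\ge 1$; feeding this into the long exact cohomology sequences yields $\#A^c(\mathbb{F}_{q^v})=q^{v\dim V}\cdot\#\pi_0(A^c)(\mathbb{F}_{q^v})$ and a bijection $H^1(\mathbb{F}_{q^v},A)\xrightarrow{\sim}H^1(\mathbb{F}_{q^v},S)$ compatible with taking $\pi_0$. As $\dim V=\dim A=a(\xi)$, the left side of $(\ast\ast)$ becomes $q^{-a(\xi)v}\sum_{[\bar c]\in H^1(\mathbb{F}_{q^v},S)}\frac{1}{\#S^{\bar c}(\mathbb{F}_{q^v})}$, and I would finish with the elementary mass identity for a finite group: writing $M:=S(\bar{\mathbb{F}}_q)$, the set $H^1(\mathbb{F}_{q^v},S)$ is the orbit set for the action of $M$ on the set $M$ by $\sigma^v$-twisted conjugation $c\cdot a:=\sigma^v(c)^{-1}ac$, with $\#S^{\bar c}(\mathbb{F}_{q^v})$ the size of the corresponding stabilizer, so these orbits partition $M$ and $\sum_{\text{orbits }O}\frac{1}{\#(\mathrm{stabilizer})}=\frac{1}{\#M}\sum_O\#O=1$. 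This establishes $(\ast\ast)$, hence $(\ast)$, hence the theorem.

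The step I expect to be the main obstacle is the structural claim just used --- that the identity component of the automorphism group scheme of a $\hat{G}$-zip is unipotent. I would deduce it from the presentation $\hat{G}\textrm{-}\mathsf{Zip}^{\chi,\Theta}\cong[E\backslash\hat{G}_{\mathbb{F}_q}]$ by a suitable zip group $E$ (recalled in §\ref{sectiegzips}), in which $A$ is a form of a stabilizer $\mathrm{Stab}_E(g)$: the image of this stabilizer in the reductive quotient $E/R_u(E)\cong\recht{Cent}_{\hat{G}_{\mathbb{F}_q}}(\chi)$ is finite because the relevant Lang-type difference map, of the shape $\ell\mapsto g^{-1}\sigma(\ell)\,g\,\ell^{-1}$, has invertible differential at the identity (the Frobenius $\sigma$ having zero differential), hence is smooth with finite kernel; so $\mathrm{Stab}_E(g)$, and therefore $A$, is an extension of a finite group by a connected unipotent group. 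I expect this to be already available in \cite{pinkwedhornziegler2015} or in §\ref{sectiegzips}. Granting it, the remaining ingredients are purely formal: expansion of a logarithm, vanishing of higher Galois cohomology of smooth connected unipotent groups over a finite field, and the orbit--stabilizer identity for a finite group acting on itself by twisted conjugation.
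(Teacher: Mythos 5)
Your proposal is correct in outline and follows essentially the same route as the paper: the formal $\log/\exp$ manipulation reducing the theorem to the point-count $(\ast)$, the classification of $\mathbb{F}_{q^v}$-forms of a fixed geometric class by $H^1$ of the automorphism group, the reduction $\#A^c(\mathbb{F}_{q^v})=q^{v\dim V}\cdot\#S^{\bar c}(\mathbb{F}_{q^v})$ via Lang's theorem and splitness of connected unipotent groups over a perfect field, and the twisted-conjugation mass formula summing to $1$ are exactly the content of Theorem~\ref{zetaquotient} and the lemma preceding it; your final paragraph on unipotence of the identity component is the content of Theorem~\ref{pwz81} together with Lemma~\ref{orbitallyfinite}. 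Two points deserve comment. First, your parenthetical ``(smooth) automorphism group scheme'' is false in general (see Remark~\ref{opmnonred}): $\underline{\Aut}(x_0)$ is typically non-reduced. This is harmless for the $H^1$ computation, since Galois descent over a perfect field only sees the abstract group $A(\bar{\mathbb{F}}_q)$ with its Galois action (the paper passes to $A^{\recht{red}}$ for exactly this reason), but it does undercut your argument for \emph{existence} of a model over $\mathbb{F}_{q^m}$ via neutrality of a gerbe banded by a smooth group; the paper instead constructs the model explicitly inside the quotient presentation $[E_{\hat{\mathcal{Z}}}\backslash\hat{G}]$ by twisting the trivial torsor with a cocycle (Lemma~\ref{cocykel} and Proposition~\ref{stacksfod}), which avoids any gerbe-theoretic input. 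Second, you treat $a(\bar\xi)$ as \emph{defined} to be $\dim\Aut$; the theorem as stated takes $a$ from Notation~\ref{nota}, namely $a(w)=\dim(G/P_+)-\ell_{I,J}(w)$, and equating the two is Proposition~\ref{rekenopm}.2, which rests on the orbit-dimension formula for non-connected algebraic zip data (Theorem~\ref{pwz75c}) --- a substantive extension of Pink--Wedhorn--Ziegler that your write-up defers to the literature rather than proves.
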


\begin{thm} \label{maintheorem2}
Let $h,n > 0$ and $0 \leq d \leq h$ be integers. Let $\Xi$ and $a\colon \Xi \rightarrow \mathbb{Z}_{\geq 0}$ be as in Notation \ref{truncnot}. Then
$$Z(\mathsf{BT}_{n}^{h,d},t) = \prod_{\xi \in \Xi} \frac{1}{1-p^{-a(\xi)}t}.$$
In particular the zeta function of the stack $\mathsf{BT}_{n}^{h,d}$ does not depend on $n$.
\end{thm}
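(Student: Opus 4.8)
The plan is to deduce Theorem~\ref{maintheorem2} from Theorem~\ref{maintheorem1} in two stages: first settle the case $n=1$, where $\mathsf{BT}_1^{h,d}$ is itself a stack of $G$-zips, and then show that $Z(\mathsf{BT}_n^{h,d},t)$ does not change when $n$ is increased. For $n=1$: combining the classification of truncated Barsotti--Tate groups of level $1$ over a base by $F$-zips (\cite{moonenwedhorn2004}) with the identification of $F$-zips and $\recht{GL}$-zips (\cite{pinkwedhornziegler2011}), one obtains an isomorphism of algebraic stacks $\mathsf{BT}_1^{h,d}\cong\recht{GL}_{h}\textrm{-}\mathsf{Zip}^{\chi,\Theta}_{\mathbb{F}_p}$, where $\chi$ has weights $(1^{(d)},0^{(h-d)})$ and $\Theta$ is forced to be trivial since $\recht{Cent}_{\recht{GL}_h}(\chi)\cong\recht{GL}_d\times\recht{GL}_{h-d}$ is connected. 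I would apply Theorem~\ref{maintheorem1} to the right-hand side. As $\recht{GL}_h$ splits over $\mathbb{F}_p$ and $\chi$ is defined over $\mathbb{F}_p$, the Galois group $\Gamma$ acts trivially on $\Xi^{\chi,\Theta}$ and every geometric point of $\recht{GL}_{h}\textrm{-}\mathsf{Zip}^{\chi,\Theta}$ already has a model over $\mathbb{F}_p$, so $f\equiv 1$; Theorem~\ref{maintheorem1} then reads $Z(\mathsf{BT}_1^{h,d},t)=\prod_{\xi\in\Xi}\frac{1}{1-p^{-a(\xi)}t}$. It remains to check that the set $\Xi$ and the function $a$ of Notation~\ref{truncnot} coincide with $\Xi^{\chi,\Theta}$ and $a$ for this $(\recht{GL}_h,\chi)$ — both sides are read off from the Weyl group of $\recht{GL}_h$ and from the dimensions of automorphism group schemes of the corresponding $F$-zips — which I expect to be a direct comparison of definitions.

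For the passage from $n$ to $n+1$ I would study the truncation morphism $\tau_n\colon\mathsf{BT}_{n+1}^{h,d}\to\mathsf{BT}_n^{h,d}$, $G\mapsto G[p^n]$, which is smooth and surjective and compatible with the Ekedahl--Oort stratification (by the isomorphism class of $G[p]$). The crucial claim is that every fibre of $\tau_n$ over a point of $\mathsf{BT}_n^{h,d}$ with values in a finite field $k$ has stacky point count $1$ over every finite extension of $k$. Granting this, the fibration formula for stacky point counts gives $\#\mathsf{BT}_{n+1}^{h,d}(\mathbb{F}_{p^v})=\#\mathsf{BT}_n^{h,d}(\mathbb{F}_{p^v})$ for all $v$, hence $Z(\mathsf{BT}_{n+1}^{h,d},t)=Z(\mathsf{BT}_n^{h,d},t)$, and Theorem~\ref{maintheorem2} follows by induction from the case $n=1$; alternatively one can phrase this as $R\tau_{n,!}\overline{\mathbb{Q}}_\ell=\overline{\mathbb{Q}}_\ell$ and invoke Behrend's Lefschetz trace formula for stacks. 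To prove the claim I would use the deformation theory of truncated Barsotti--Tate groups: the fibre of $\tau_n$ over a $k$-point $G$ is the stack of lifts of $G$ to a level-$(n+1)$ object, this stack is of the form $[\mathcal{L}_G/\mathcal{A}_G]$ with $\mathcal{L}_G$ the space of lifts (Zariski-locally an affine space $\mathbb{A}^{r(G)}$) and $\mathcal{A}_G$ a smooth group scheme whose identity component is unipotent, and the point is that $\dim\mathcal{A}_G=r(G)$ because the group classifying lifts and the group $\ker(\recht{Aut}(\tilde G)\to\recht{Aut}(G))$ of automorphisms of a lift inducing the identity on $G$ are both identified, via Illusie's and Grothendieck--Messing's deformation theory, with the same $\Hom$-type group scheme (of dimension $d(h-d)$). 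Together with the identities $\#[\mathbb{A}^r/\mathcal{A}](k')=\#\mathbb{A}^r(k')/\#\mathcal{A}(k')$ for smooth $\mathcal{A}$, the fact that a connected unipotent group has $(\#k')^{\dim}$ points over $k'$, and the fact that the classifying stack of any finite group scheme over a finite field has stacky point count $1$, this yields $\#\tau_n^{-1}(G)(k')=1$.

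The main obstacle will be making the deformation-theoretic input precise and uniform: one must identify the space of lifts of a $\mathsf{BT}_n$ to a $\mathsf{BT}_{n+1}$ and the automorphisms of a lift restricting to the identity with the \emph{same} (or at least equidimensional) group scheme, do so functorially over all of $\mathsf{BT}_n^{h,d}$ — not just over the ordinary locus, where both are finite, but also over the non-ordinary strata, where both have positive dimension — and verify that no reductive part ever intrudes into the automorphism group schemes, since a nontrivial torus would turn the corresponding factor of $Z$ into an infinite product. Once these structural properties of $\tau_n$ are established, the remainder of the argument is formal manipulation of the zeta function as defined in the introduction.
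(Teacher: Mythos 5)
Your base case ($n=1$) is essentially the paper's: the paper also passes through the identification of $\mathsf{D}^{h,d}_{1}$ with $\recht{GL}_h\textrm{-}\mathsf{Zip}^{\chi,\Theta}_{\mathbb{F}_p}$ (via \cite[9.18 \& 8.3]{pinkwedhornziegler2015}), and your observations that $\Theta$ is trivial, that $\Gamma$ acts trivially on ${}^IW$, and hence that $f\equiv 1$, are all correct and agree with Notation \ref{truncnot}. Your induction step, however, is a genuinely different route. The paper never inducts on $n$: it uses Vasiu's presentation $\mathsf{D}^{h,d}_n(\mathbb{F}_q)\simeq[\mathcal{H}_n\backslash\mathcal{D}_n](\mathbb{F}_q)$ with $\mathcal{D}_n=\mathbb{W}_n(\recht{GL}_h)$, decomposes by Ekedahl--Oort type $w$, observes that the preimage $\mathcal{D}_{n,w}$ of a level-$1$ representative $g_{1,w}$ is an affine space of dimension $h^2(n-1)$ acted on (through a connected group) by $\mathcal{H}_{n,w}$, whose identity component is unipotent of dimension $h^2(n-1)+a(w)$ by Gabber--Vasiu, and then applies Theorem \ref{affienquotient} once for each $w$ and each $n$. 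Your fibre-by-fibre analysis of the truncation morphism is finer and, if completed, would buy a cleaner conceptual statement ($R\tau_{n,!}\overline{\mathbb{Q}}_\ell=\overline{\mathbb{Q}}_\ell$); the paper's route buys a uniform, already-available structural input.

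The gap is exactly where you place it, but your proposed resolution is not correct as stated. You claim the space of lifts and $\ker(\recht{Aut}(\tilde G)\to\recht{Aut}(G))$ are both identified with ``the same $\Hom$-type group scheme of dimension $d(h-d)$''; this is inconsistent with your own (correct) remark that over the ordinary locus both are finite, and in general these dimensions vary with the stratum rather than equalling $d(h-d)$. What is actually true, and what makes the count work, is visible only after choosing a rigidified presentation: in the Dieudonné/Vasiu picture the fibre of $\tau_n$ over a $k'$-point $g\in\mathcal{D}_n(k')$ is $[\mathcal{K}\backslash\mathcal{P}_g]$ with $\mathcal{P}_g\cong\mathbb{A}^{h^2}_{k'}$ (the fibre of $\mathbb{W}_{n+1}(\recht{GL}_h)\to\mathbb{W}_n(\recht{GL}_h)$) and $\mathcal{K}=\ker(\mathcal{H}_{n+1}\to\mathcal{H}_n)$ connected unipotent of dimension $h^2$ (Gabber--Vasiu, as cited in the paper), whence the stacky count $p^{h^2v}/p^{h^2v}=1$; the intrinsic objects you want to compare (isomorphism classes of lifts, and lift-preserving automorphisms) are the quotient, respectively the stabiliser, of this action, and only their \emph{difference} of dimensions is controlled a priori. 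So to make your step rigorous you would in effect have to import the same input the paper uses (or redo Illusie's deformation theory uniformly over all strata); once you do, your argument closes. Two smaller points to attend to: the groupoid fibration formula for stacky counts needs the fibre taken as a $2$-fibre product (so that its automorphisms are the lift-preserving ones), and your count of $[\mathcal{L}_G/\mathcal{A}_G]$ equals $1$ only if the component group of $\mathcal{A}_G$ is handled as you indicate, via Lang's theorem and the unit count of classifying stacks of finite group schemes.
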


All the terminology used in the statements above will be introduced in due time. For now let us note that the functions $a$ and $f$ can also be expressed in terms of the action of the Weyl group of $\hat{G}$ on the root system, and are readily calculated for a given $(\hat{G},\chi,\Theta)$ (see Example \ref{voorbeeld2}).

\section{The Zeta function of quotient stacks} \label{sectiezeta}

In this section, we discuss some properties of quotient stacks over finite fields and their zeta functions.

\subsection{Fields of definition in quotient stacks} \label{fod}

Let $G$ be an algebraic group over a field $k$, and let $X$ be a scheme over $k$ with a left action of $G$. Then one can consider the \emph{quotient stack} $[G \backslash X]$, whose objects over a $k$-scheme $S$ are pairs $(T,f)$ of a $G$-torsor $T$ over $S$ and a $G_S$-equivariant morphism $f\colon T \rightarrow X_S$, with the obvious notions of morphisms and pullbacks. The automorphism group of such a pair $(T,f)$ can be considered as an algebraic group; it is the subgroup $\recht{Stab}(f) \subset \recht{Aut}_G(T)$.

\begin{rem} \label{fodopmerking}
Denote by ${}_G X$ the $\recht{Gal}(\bar{k}/k)$-set $G(\bar{k}) \backslash X(\bar{k})$. For a finite field extension $k'/k$  we will occasionally use the notation ${}_G X(k') := ({}_G X)^{\recht{Gal}(\bar{k}/k')}$. Over $\bar{k}$, all torsors are trivial, hence the set of isomorphism classes $[[G\backslash X](\bar{k})]$ corresponds bijectively to ${}_G X$, by sending a pair $(G_{\bar{k}},f)$ to  the orbit $f(G) \subset X_{\bar{k}}$. If $C \in {}_G X$, we will occasionally write $P(C)$ for the corresponding isomorphism class in $[G\backslash X](\bar{k})$. Note that for any element $c \in C$ we get an isomorphism $\recht{Stab}_G(c) \cong \recht{Aut}(P(C))$ of algebraic groups over $\bar{k}$.
\end{rem}

For the next main result we first need a lemma. For an algebraic group $G$ over a field $k$, recall that the set of cocycles $\recht{Z}^1(k,G)$ is defined as the set of continuous maps $z\colon \recht{Gal}(\bar{k}/k) \rightarrow G(\bar{k})$ (where the right hand side has the discrete topology) satisfying 
\begin{equation} \label{cocyclecondition}
z(\sigma \sigma') = z(\sigma)\cdot {}^{\sigma}z(\sigma')
\end{equation}
for all $\sigma,\sigma' \in \recht{Gal}(\bar{k}/k)$.

\begin{lem} \label{cocykel}
Let $k$ be a finite field, and let $G$ by an algebraic group over $k$. Let $\sigma \in \recht{Gal}(\bar{k}/k)$ be the $\#k$-th power Frobenius endomorphism of $\bar{k}$. Then there is a natural bijection
\begin{eqnarray*}
\recht{Z}^1(k,G) &\stackrel{\sim}{\rightarrow} & G(\bar{k})\\
z &\mapsto &z(\sigma).
\end{eqnarray*}
\end{lem}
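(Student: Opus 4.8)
\emph{Setup.} Write $q := \#k$, so that $k = \mathbb{F}_q$ and $\Gamma := \recht{Gal}(\bar{k}/k) \cong \hat{\mathbb{Z}}$, topologically generated by $\sigma$; the submonoid $\{\sigma^n : n \in \mathbb{Z}_{\geq 0}\}$ is dense in $\Gamma$, and the reduction maps $\Gamma \twoheadrightarrow \recht{Gal}(\mathbb{F}_{q^N}/\mathbb{F}_q) \cong \mathbb{Z}/N\mathbb{Z}$ send $\sigma \mapsto \bar 1$. Moreover $G(\bar{k}) = \bigcup_{m \geq 1} G(\mathbb{F}_{q^m})$, a directed union of \emph{finite} groups (each $G_{\mathbb{F}_{q^m}}$ is of finite type over a finite field). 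For $g \in G(\bar{k})$ and $n \geq 0$ set $P_n(g) := g \cdot {}^{\sigma}g \cdots {}^{\sigma^{n-1}}g \in G(\bar{k})$, so $P_0(g) = e$ and $P_{a+b}(g) = P_a(g) \cdot {}^{\sigma^a}P_b(g)$ for all $a,b \geq 0$.

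\emph{Injectivity.} Applying \eqref{cocyclecondition} repeatedly with $\sigma' = \sigma$ gives, by induction on $n$, that $z(\sigma^n) = P_n(z(\sigma))$ for every $n \geq 0$; and from $e = z(1) = z(\sigma)\cdot{}^{\sigma}z(\sigma^{-1})$ one recovers $z(\sigma^{-1})$, hence (iterating) the value of $z$ on all of $\langle\sigma\rangle$, from $z(\sigma)$. Since $z$ is continuous and $\langle\sigma\rangle$ is dense in $\Gamma$, the cocycle $z$ is determined by $z(\sigma)$, so $z \mapsto z(\sigma)$ is injective.

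\emph{Surjectivity.} Fix $g \in G(\bar{k})$ and choose $m$ with $g \in G(\mathbb{F}_{q^m})$; then each factor ${}^{\sigma^i}g$ ($0 \leq i < m$) is fixed by $\sigma^m$, so $N := P_m(g) \in G(\mathbb{F}_{q^m})$ and hence $N$ has finite order, say $s$. Using the multiplicativity of $P_\bullet$ and the $\sigma^m$-invariance of $N$ one gets $P_{mr}(g) = N^r$, so $P_{ms}(g) = e$ and therefore $P_{n+ms}(g) = P_n(g)\cdot{}^{\sigma^n}P_{ms}(g) = P_n(g)$ for all $n \geq 0$. Thus $n \mapsto P_n(g)$ factors through $\mathbb{Z}/ms\mathbb{Z}$, and precomposing with $\Gamma \twoheadrightarrow \recht{Gal}(\mathbb{F}_{q^{ms}}/\mathbb{F}_q) \cong \mathbb{Z}/ms\mathbb{Z}$ yields a continuous map $z\colon \Gamma \to G(\bar{k})$ with $z(\sigma^n) = P_n(g)$ for all $n \geq 0$; in particular $z(\sigma) = P_1(g) = g$. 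It remains to check that $z$ satisfies \eqref{cocyclecondition}. The identity $z(\sigma^a\sigma^b) = z(\sigma^a)\cdot{}^{\sigma^a}z(\sigma^b)$ holds for $a,b \geq 0$ by the multiplicativity of $P_\bullet$; both sides of \eqref{cocyclecondition} are continuous functions $\Gamma \times \Gamma \to G(\bar{k})$ (the $\Gamma$-action on the discrete group $G(\bar{k})$ is continuous), and they agree on the dense subset $\{\sigma^a:a\geq 0\}^2 \subset \Gamma\times\Gamma$, hence everywhere. So $z \in \recht{Z}^1(k,G)$ maps to $g$, proving surjectivity. Finally, a homomorphism $G \to G'$ of algebraic groups over $k$ carries $P_n(g)$ to $P_n$ of the image, so the bijection is natural in $G$.

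\emph{Main obstacle.} The only real work is in surjectivity, and specifically in producing a \emph{continuous} cocycle: one must show the partial products $P_n(g)$ are eventually periodic in $n$, which is exactly where the finiteness of $G(\mathbb{F}_{q^m})$ (equivalently, the finite order of $N$) is indispensable — over an infinite base field the corresponding statement is false. A secondary, routine point is that \eqref{cocyclecondition} must be verified on all of $\Gamma$ and not merely on powers of $\sigma$, which is handled by density of $\langle\sigma\rangle$ together with continuity of the action and multiplication maps.
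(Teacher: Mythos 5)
Your proof is correct and follows essentially the same strategy as the paper's: both build the cocycle from the partial products $g\cdot{}^{\sigma}g\cdots{}^{\sigma^{n-1}}g$, use the finiteness of the group of points over a finite extension to establish periodicity in $n$, and then extend continuously through a finite quotient of $\recht{Gal}(\bar{k}/k)$. The only cosmetic differences are that you define the cocycle on nonnegative powers of $\sigma$ only and invoke density plus continuity to handle the rest (where the paper writes an explicit formula for negative powers and verifies the cocycle condition on all of $\langle\sigma\rangle$), and that you derive periodicity from the finite order of $P_m(g)$ in $G(\mathbb{F}_{q^m})$ rather than from a pigeonhole argument.
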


\begin{proof}
Let $\Gamma$ be the Galois group $\recht{Gal}(\bar{k}/k)$. Since $\langle \sigma \rangle \subset \Gamma$ is a dense subgroup, the map is certainly injective. To show that it is surjective, let $g \in G(\bar{k})$, and define a map $z\colon \langle \sigma \rangle \rightarrow G(\bar{k})$ by
$$z(\sigma^n) = \left\{\begin{array}{ll} g \cdot {}^\sigma g \cdots  {}^{\sigma^{n-1}}g, & \textrm{ if $n \geq 0$}; \\
{}^{\sigma^{-1}} g^{-1} \cdots {}^{\sigma^{n}}g^{-1}& \textrm{ if $n < 0$}.\end{array}\right.$$
This satisfies the cocycle condition (\ref{cocyclecondition}) on $\langle \sigma \rangle$. Let $e$ be the unit element of $G(\bar{k})$. To show that we can extend $z$ continuously to $\Gamma$, we claim that there is an integer $n$ such that $z(\sigma^N) = e$ for all $N \in n\mathbb{Z}$. To see this, let $k'$ be a finite extension of $k$ such that $g \in G(k')$. Then from the definition of the map $z$ we see that $z$ maps $\langle \sigma \rangle$ to $G(k')$. The latter is a finite group, and hence there must be $m < m' \in \mathbb{Z}_{\geq 0}$ such that $z(\sigma^m) = z(\sigma^{m'})$. Set $n = m'-m$. From the definition of $z$ we see that
$$z(\sigma^{m'}) = z(\sigma^{m}) \cdot {}^{\sigma^{m}}g \cdots {}^{\sigma^{m'-1}}g,$$
hence ${}^{\sigma^{m}}g \cdots {}^{\sigma^{m'-1}}g = e$; but the left hand side of this is equal to ${}^{\sigma^{m}}z(\sigma^{n})$, hence $z(\sigma^n) = e$. The cocycle condition (\ref{cocyclecondition}) now shows that $z(\sigma^{N}) = e$ for every multiple $N$ of $n$; furthermore, we see that for general $f \in \mathbb{Z}$ the value $z(\sigma^f)$ only depends on $\bar{f} \in \mathbb{Z}/n\mathbb{Z}$. Hence we can extend $z$ to all of $\Gamma$ via the composite map
$$\Gamma \twoheadrightarrow \Gamma/n\Gamma \stackrel{\sim}{\rightarrow} \langle \sigma \rangle / \langle \sigma^n \rangle \stackrel{z}{\rightarrow} G(\bar{k}),$$
and this is an element of $\recht{Z}^1(k,G)$ that sends $\sigma$ to $g$; hence the map in the lemma is surjective, as was to be shown.
\end{proof}

\begin{defn}
Let $k$ be a finite field, let $G$ be an algebraic group over $k$, and let $X$ be a $k$-scheme with a left action of $G$. Let $k'$ be an algebraic extension of $k$. Let $(T,f)$ be an object of $[G\backslash X](\bar{k})$. Then a \emph{model of $(T,f)$ over $k'$} is an object $(\tilde{T},\tilde{f})$ of $[G\backslash X](k')$ such that $(\tilde{T},\tilde{f})_{\bar{k}} \cong (T,f)$.
\end{defn}

\begin{prop} \label{stacksfod}
Let $k$ be a finite field, let $G$ be an algebraic group over $k$, and let $X$ be a $k$-scheme with a left action of $G$. Let $C \in {}_GX$. Suppose $k$ is finite. Then for every finite extension $k' \supset k$ the following are equivalent:
\begin{enumerate}
\item $P(C)$ admits a model over $k'$;
\item $C \in {}_GX$ is fixed under the action of $\recht{Gal}(\bar{k}/k')$.
\end{enumerate}
\end{prop}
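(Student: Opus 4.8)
The plan is to prove the two implications separately. Throughout I will use the description of $P(C)$ from Remark~\ref{fodopmerking}: over $\bar{k}$ the torsor underlying $P(C)$ is trivial, so after choosing a point $c\in C$ the object $P(C)$ is represented by the pair $(G_{\bar{k}},f_c)$ with $f_c\colon G_{\bar{k}}\to X_{\bar{k}}$ the orbit map $g\mapsto g\cdot c$, and $f_c(G(\bar{k}))=C$ as a subset of $X(\bar{k})$. The implication (1)$\Rightarrow$(2) is then routine: given a model $(\tilde{T},\tilde{f})$ of $P(C)$ over $k'$, I would base change to $\bar{k}$ and transport along an isomorphism $(\tilde{T},\tilde{f})_{\bar{k}}\cong(G_{\bar{k}},f_c)$ to identify $\tilde{f}(\tilde{T}(\bar{k}))$ with $C\subset X(\bar{k})$; since $\tilde{T}$ and $\tilde{f}$ are defined over $k'$, this subset is stable under $\recht{Gal}(\bar{k}/k')$, which is exactly statement (2).

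For (2)$\Rightarrow$(1) I would first reduce to the case $k'=k$, noting that replacing $k$ by $k'$ changes neither the $\recht{Gal}(\bar{k}/k')$-set ${}_GX$, nor the object $P(C)$ over $\bar{k}$, nor the meaning of ``model over $k'$'', as $[G\backslash X]_{k'}=[G_{k'}\backslash X_{k'}]$. So assume $C$ is fixed by $\Gamma:=\recht{Gal}(\bar{k}/k)$, let $\sigma\in\Gamma$ be the $\#k$-th power Frobenius, and fix $c\in C$. Since $C$ is a $G(\bar{k})$-orbit stable under $\sigma$, one has ${}^{\sigma}c\in C=G(\bar{k})\cdot c$, so I may choose $g\in G(\bar{k})$ with $g\cdot{}^{\sigma}c=c$; by Lemma~\ref{cocykel} there is a cocycle $z\in\recht{Z}^1(k,G)$ with $z(\sigma)=g$. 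The model will then be produced by twisting the trivial $G$-torsor by $z$ to obtain a $G$-torsor $T^z$ over $k$ with $T^z_{\bar{k}}\cong G_{\bar{k}}$, and by descending the orbit map $f_c$ to a $G_k$-equivariant morphism $T^z\to X_k$; the condition that makes this descent possible is precisely the identity $z(\sigma')\cdot{}^{\sigma'}c=c$ for \emph{every} $\sigma'\in\Gamma$, and the real work is to deduce it from the case $\sigma'=\sigma$.

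To carry out that deduction I would consider the map $\phi\colon\Gamma\to C$, $\phi(\sigma')=z(\sigma')\cdot{}^{\sigma'}c$, which lands in $C$ because $C$ is stable under $G(\bar{k})$ and under $\Gamma$. The cocycle relation~(\ref{cocyclecondition}) gives $\phi(\sigma'\sigma)=z(\sigma')\cdot{}^{\sigma'}z(\sigma)\cdot{}^{\sigma'}({}^{\sigma}c)=z(\sigma')\cdot{}^{\sigma'}\!\big(z(\sigma)\cdot{}^{\sigma}c\big)=z(\sigma')\cdot{}^{\sigma'}\phi(\sigma)=\phi(\sigma')$ for all $\sigma'\in\Gamma$, using $\phi(\sigma)=c$; together with $z(1)=e$ this gives $\phi(\sigma^n)=\phi(1)=c$ for all $n\in\mathbb{Z}$, i.e.\ $\phi$ is constant on the subgroup $\langle\sigma\rangle$. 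Since $z$ is continuous and $X$ is of finite type over $k$ (so that $\Gamma$ acts continuously on $X(\bar{k})$), the map $\phi$ is locally constant, and as $\langle\sigma\rangle$ is dense in $\Gamma$ this forces $\phi\equiv c$, which is the desired identity. This propagation from $\sigma$ to all of $\Gamma$ is the step I expect to be the main obstacle, and it is exactly where finiteness of $k$ enters: over a general field there is no reason such a cocycle should exist.

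With the identity ${}^{\sigma'}c=z(\sigma')^{-1}\cdot c$ established, the remainder is formal: it is precisely the condition for the $G_{\bar{k}}$-equivariant orbit map $f_c$ to be $\Gamma$-equivariant, hence to descend along the twist to a $G_k$-equivariant morphism $f\colon T^z\to X_k$; and since $T^z$ and $f$ base change to $G_{\bar{k}}$ and $f_c$ over $\bar{k}$, the pair $(T^z,f)$ is a model of $P(C)$ over $k$, which completes the proof of (2)$\Rightarrow$(1).
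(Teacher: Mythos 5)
Your proof is correct and follows essentially the same route as the paper's: the forward implication is the same observation about $\tilde f(\tilde T(\bar k))=C$, and the converse uses Lemma~\ref{cocykel} to build a cocycle from a single element $g$ witnessing $\sigma$-stability of $C$ and then twists the trivial torsor so that the orbit map descends. The only difference is that you spell out (via the map $\phi$ and density of $\langle\sigma\rangle$) why the descent identity propagates from $\sigma$ to all of $\recht{Gal}(\bar k/k')$, a step the paper leaves implicit.
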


\begin{rem}
In view of the Proposition above we may define the \emph{field of definition} of $P(C)$ to be the field of definition of $G(\bar{k})x \in G(\bar{k}) \backslash X(\bar{k})$; by the Proposition it is the minimal field extension of $k$ over which $P(C)$ has a model. Note, however, that this model is necessarily not unique. For example, if $X = \recht{Spec} \ k$, so that $[G \backslash X]$ is the classifying stack $\recht{B}G$, the models over $k$ of the unique element of $[[G \backslash X](\bar{k})]$ are precisely the $G$-torsors over $k$.
\end{rem}

\begin{proof}[Proof of Proposition \ref{stacksfod}.] Write $P(C) = (T,f)$. First suppose $(T,f)$ admits a model over $k'$. Then $f(T(\bar{k})) \subset X(\bar{k})$ is necessarily invariant under the action of $\recht{Gal}(\bar{k}/k')$. This image is equal to $C$, which proves one implication. Now consider the other implication. Over $\bar{k}$, all torsors are trivial, hence $T = G$. Let $e$ be the unit element of $G(\bar{k})$, and let $\sigma$ be the $\#k$-th power Frobenius on $\bar{k}$. By assumption the action of $\sigma$ sends the $G(\bar{k})$-orbit $C \subset X(\bar{k})$ to itself, hence there exists a $g_0 \in G(\bar{k})$ such that $g_0\cdot f(e) = \sigma(f(e))$. Furthermore, we know that the automorphism group of $G$ as a $G$-torsor is isomorphic to $G^{\recht{op}}$, where we send a $g \in G^{\recht{op}}$ to the automorphism $t \mapsto t \cdot g$ of $G$ as a left $G$-torsor. Now by Lemma \ref{cocykel} there exists a cocycle in $\recht{Z}^1(k',G^{\recht{op}})$ that sends $\sigma$ to $g_0$. This cocycle defines a twist $\tilde{T}$ of $T$ over $k'$; its Galois action is given by $\tilde{\sigma}(t) = \sigma(t)g_0$, where $\sigma$ is the regular action of $\recht{Fr}_{k'}$ on $G(\bar{k})$. Under this twist the map $f\colon \tilde{T}_{\bar{k}} \rightarrow X_{\bar{k}}$ is $\sigma$-invariant, hence defined over $k'$. This shows that this twist defines a model $(\tilde{T},\tilde{f})$ over $k'$, as was to be shown.
\end{proof}

\subsection{Counting points on quotient stacks}

As before, let $k$ be a field, let $G$ be an algebraic group over $k$, and let $X$ be a scheme over $k$ with a left action of $G$. For the rest of this section, we assume that $k = \mathbb{F}_{q}$, and we are interested in the zeta function of the stack $[G
\backslash X]$. The first main result to prove in this section is the following:

\begin{thm} \label{zetaquotient}
Let $k = \mathbb{F}_q$ be a finite field, let $G$ be an algebraic group over $k$, and let $X$ be a scheme over $k$ with a left action of $G$. Suppose that for each $C \in {}_G X$ the identity component of the group scheme $\recht{Aut}(P(C))^{\recht{red}}$ is unipotent. Then
$$Z([G\backslash X],t) = \recht{exp}\left(\sum_{v \geq 1} \frac{t^v}{v} \sum_{C \in {}_G X(\mathbb{F}_{q^v})} q^{-\recht{dim}(\recht{Aut}(P(C)))\cdot v}\right).$$
\end{thm}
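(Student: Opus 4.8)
The plan is to unwind the definition of the zeta function, reduce it term by term to a point count over a single finite field, and then to a statement about classifying stacks of automorphism groups. By the definition of the zeta function, it suffices to prove for each $v\geq 1$ that
\[\sum_{x\in[[G\backslash X](\mathbb{F}_{q^v})]}\frac{1}{\#\recht{Aut}(x)} = \sum_{C\in{}_GX(\mathbb{F}_{q^v})}q^{-v\cdot\recht{dim}\,\recht{Aut}(P(C))}.\]
Base changing $(G,X)$ along $\mathbb{F}_q\subset\mathbb{F}_{q^v}$ changes neither the set underlying ${}_GX$ (only the $\recht{Gal}(\bar{k}/-)$-action on it), nor the group schemes $\recht{Aut}(P(C))$, nor the hypothesis (unipotence is stable under base change), and $[G\backslash X](\mathbb{F}_{q^v})=[G_{\mathbb{F}_{q^v}}\backslash X_{\mathbb{F}_{q^v}}](\mathbb{F}_{q^v})$; so I may assume $v=1$ and must prove
\[\sum_{x\in[[G\backslash X](\mathbb{F}_q)]}\frac{1}{\#\recht{Aut}(x)} = \sum_{C\in{}_GX(\mathbb{F}_q)}q^{-\recht{dim}\,\recht{Aut}(P(C))}.\]
By Remark~\ref{fodopmerking}, sending $(T,f)$ to the $G(\bar{k})$-orbit of the image of $f$ is a bijection from the isomorphism classes of $[G\backslash X](\bar{k})$ to ${}_GX$; restricted to objects defined over $\mathbb{F}_q$ it takes values in the $\recht{Gal}(\bar{k}/\mathbb{F}_q)$-fixed subset ${}_GX(\mathbb{F}_q)$, identifies two objects precisely when their base changes to $\bar{k}$ agree, and is surjective onto ${}_GX(\mathbb{F}_q)$ by Proposition~\ref{stacksfod}. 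Hence the left-hand side above equals $\sum_{C\in{}_GX(\mathbb{F}_q)}M(C)$, where $M(C):=\sum_{x:\,x_{\bar{k}}\cong P(C)}\frac{1}{\#\recht{Aut}(x)}$, and it remains to prove the local identity $M(C)=q^{-\recht{dim}\,\recht{Aut}(P(C))}$ for each fixed $C$.

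Fix such a $C$ and a model $x_0$ of $P(C)$ over $\mathbb{F}_q$ (which exists by Proposition~\ref{stacksfod}), and let $A$ be the automorphism group scheme of $x_0$: an affine algebraic group over $\mathbb{F}_q$ with $A_{\bar{k}}\cong\recht{Aut}(P(C))$, so that $\recht{dim}\,A=\recht{dim}\,\recht{Aut}(P(C))$ and, by hypothesis, $A^{\recht{red},\circ}$ is unipotent. By nonabelian descent along $\bar{k}/\mathbb{F}_q$ (the relevant isomorphisms are already defined over a finite subextension), the groupoid of objects of $[G\backslash X](\mathbb{F}_q)$ whose base change to $\bar{k}$ is isomorphic to $(x_0)_{\bar{k}}$ is equivalent, compatibly with automorphism groups, to the groupoid of $A$-torsors over $\mathbb{F}_q$ (send $x$ to $\Isom(x_0,x)$; conversely twist $x_0$ by a torsor). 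Therefore $M(C)=\#(\recht{B}A)(\mathbb{F}_q)$, the groupoid cardinality of the category of $A$-torsors over $\mathbb{F}_q$, and the theorem is reduced to a purely group-theoretic statement: \emph{for every algebraic group $A$ over $\mathbb{F}_q$ whose reduced identity component is unipotent, $\#(\recht{B}A)(\mathbb{F}_q)=q^{-\recht{dim}\,A}$.}

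I would prove this last statement first for connected $A$. Then $A^{\recht{red}}=A^{\recht{red},\circ}$ is connected, smooth and unipotent, hence split over the perfect field $\mathbb{F}_q$, so $\#A(\mathbb{F}_q)=\#A^{\recht{red}}(\mathbb{F}_q)=q^{\recht{dim}\,A}$; also $H^1(\mathbb{F}_q,A)$ is trivial, by Lang's theorem for $A^{\recht{red},\circ}$ together with the vanishing of the cohomology over $\mathbb{F}_q$ of the infinitesimal quotient $A/A^{\recht{red},\circ}$ (which reduces, via a composition series, to $\recht{Br}(\mathbb{F}_q)=0$ and $H^i(\mathbb{F}_q,\mathbb{G}_{\recht{a}})=0$ for $i>0$). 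Now pick a closed immersion $A\hookrightarrow\recht{GL}_n$ over $\mathbb{F}_q$. Since $\recht{GL}_n$ is connected and smooth, Lang's theorem trivialises every $\recht{GL}_n$-torsor over $\mathbb{F}_q$, so for any $\recht{GL}_n$-scheme $Y$ one has $[\recht{GL}_n\backslash Y](\mathbb{F}_q)\simeq[\recht{GL}_n(\mathbb{F}_q)\backslash Y(\mathbb{F}_q)]$, whence $\#[\recht{GL}_n\backslash Y](\mathbb{F}_q)=\#Y(\mathbb{F}_q)/\#\recht{GL}_n(\mathbb{F}_q)$. Applying this to $Y=\recht{GL}_n/A$, and using $\recht{B}A\cong[\recht{GL}_n\backslash(\recht{GL}_n/A)]$ together with $\#(\recht{GL}_n/A)(\mathbb{F}_q)=\#\recht{GL}_n(\mathbb{F}_q)/\#A(\mathbb{F}_q)$ (each fibre of $\recht{GL}_n\to\recht{GL}_n/A$ over an $\mathbb{F}_q$-point is a trivial $A$-torsor, as $H^1(\mathbb{F}_q,A)$ is trivial), yields $\#(\recht{B}A)(\mathbb{F}_q)=1/\#A(\mathbb{F}_q)=q^{-\recht{dim}\,A}$. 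For general $A$, set $F:=A/A^{\recht{red},\circ}$ (a finite group scheme) and apply the fibration formula for groupoid cardinalities to $(\recht{B}A)(\mathbb{F}_q)\to(\recht{B}F)(\mathbb{F}_q)$: its $2$-fibre over a torsor $\bar{P}$ consists of the $\mathbb{F}_q$-points of a gerbe banded by a form of $A^{\recht{red},\circ}$, which is neutral (the obstruction lies in an $H^2$ over $\mathbb{F}_q$ that vanishes for connected unipotent bands) and hence equivalent to $\recht{B}A'$ for a form $A'$ of $A^{\recht{red},\circ}$, thus contributing the constant $q^{-\recht{dim}\,A}$ by the connected case; so $\#(\recht{B}A)(\mathbb{F}_q)=q^{-\recht{dim}\,A}\cdot\#(\recht{B}F)(\mathbb{F}_q)$. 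Finally $\#(\recht{B}F)(\mathbb{F}_q)=1$ for every finite group scheme $F$ over $\mathbb{F}_q$: the infinitesimal part contributes $1$ by the connected case (its dimension is $0$), and for the \'etale quotient $\pi_0(F)$ the orbit--stabiliser identity for the $\sigma$-twisted conjugation action of $\pi_0(F)(\bar{k})$ on itself — whose orbits are, by Lemma~\ref{cocykel}, exactly $H^1(\mathbb{F}_q,\pi_0(F))$ and whose stabilisers are the $\mathbb{F}_q$-points of the corresponding twisted forms — reads $\sum_{[z]}1/\#({}_z\pi_0(F))(\mathbb{F}_q)=1$.

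The reductions to $v=1$ and to a single orbit are formal consequences of Proposition~\ref{stacksfod} and Remark~\ref{fodopmerking}; the heart of the argument, and the step I expect to be the main obstacle, is the evaluation $\#(\recht{B}A)(\mathbb{F}_q)=q^{-\recht{dim}\,A}$. There one must organise the twists and the fibration over $\recht{B}(A/A^{\recht{red},\circ})$ carefully, verify the relevant (possibly nonabelian) $H^2$-obstructions over $\mathbb{F}_q$, and use that the unipotence hypothesis is precisely what makes $\#A^{\recht{red},\circ}(\mathbb{F}_q)$ equal to $q^{\recht{dim}\,A}$ on the nose — without it, for instance if $\recht{Aut}(P(C))$ contained a torus (where $\#\mathbb{G}_{\recht{m}}(\mathbb{F}_q)=q-1$), the factor $q^{-\recht{dim}}$ would be replaced by a nontrivial rational function of $q$ and the zeta function would have a different shape.
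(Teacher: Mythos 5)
Your proposal is correct in substance, and it reaches the same two essential inputs as the paper --- Lang's theorem plus Rosenlicht's $\mathbb{A}^{\dim}$ structure of (twisted forms of) connected unipotent groups for the identity component, and the orbit--stabiliser identity for $\sigma$-twisted conjugation on the finite \'etale quotient --- but it organises them differently. The paper fixes one model of $P(C)$, enumerates its forms explicitly as $\recht{Conj}_{q^v}(\pi_0(\hat{M}))$ via $\recht{H}^1(\mathbb{F}_{q^v},\hat{M})=\recht{H}^1(\mathbb{F}_{q^v},\pi_0(\hat{M}))$ and Lemma~\ref{cocykel}, computes $\#\recht{Aut}$ of each form by hand as $q^{v\dim\hat{M}}\cdot\#\pi_0(\hat{M})/\#c$, and sums; you instead isolate the clean intermediate statement $\#(\recht{B}A)(\mathbb{F}_q)=q^{-\dim A}$ and prove it by d\'evissage (embedding into $\recht{GL}_n$ for the connected case, then fibering $\recht{B}A\to\recht{B}F$). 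Your packaging is more conceptual and reusable, but it is also the one place where you take on obligations the paper avoids: the step ``the $2$-fibre is a \emph{neutral} gerbe banded by a form of $A^{\recht{red},\circ}$'' requires a nonabelian $\recht{H}^2$-vanishing (every $F$-torsor over $\mathbb{F}_q$ lifts to an $A$-torsor), which in the paper is exactly the cited surjectivity of $\recht{H}^1(\mathbb{F}_{q'},\hat{M})\to\recht{H}^1(\mathbb{F}_{q'},\pi_0(\hat{M}))$ from \cite[Corollary III.2.4.2]{serre1997}; and your identification of the groupoid of forms with \emph{all} fppf $A$-torsors for the full, possibly non-reduced $A$ silently disposes of the infinitesimal contribution to $\recht{H}^1$ and $\recht{H}^2$, a point the paper also glosses over (it passes to $\recht{Aut}^{\recht{red}}$ with only the remark that $\recht{H}^1$ depends only on the $\bar{k}$-points with Galois action). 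If you cite or prove these neutrality/lifting facts, your argument is a complete and somewhat more modular alternative to the paper's direct count.
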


\begin{rem} \label{zetaopm}
Consider a pair $(G_{\bar{k}},f)$ over $\bar{k}$. Then the group scheme of $G$-torsor automorphisms of $G_{\bar{k}}$ is equal to $G_{\bar{k}}^{\recht{op}}$, which acts on $G_{\bar{k}}$ by multiplication on the right. Let $e \in G_{\bar{k}}$ be the unit element. Then $\recht{Stab}(f) \subset G_{\bar{k}}^{\recht{op}}$ is equal to $\recht{Stab}_{G(\bar{k})}(f(e))^{\recht{op}}$, the stabiliser of $f(e)$ in the $G(\bar{k})$-set $X(\bar{k})$; hence 
$$\recht{dim}(\recht{Aut}(G_{\bar{k}},f)) = \recht{dim}(G) - \recht{dim}(G(\bar{k}) \cdot f(e)).$$
Since dimension is invariant under field extensions, we find that $\recht{dim}(\recht{Aut}(P(C))) = \recht{dim}(G) - \recht{dim}(C)$ for all $C \in {}_G X$.
\end{rem}

To prove Theorem \ref{zetaquotient} we need a few intermediate results. Let $q'$ be a power of $q$, and let $\Gamma$ be a finite étale group scheme over $\mathbb{F}_{q'}$, considered as an abstract group together with an action of $\recht{Gal}(\bar{\mathbb{F}}_q/\mathbb{F}_{q'})$. Let $\Gamma$ act on itself from the left via
$$\gamma \cdot \gamma' = \gamma\gamma' \recht{Fr}_{q'}(\gamma)^{-1}.$$
Let $\recht{Conj}_{q'}(\Gamma)$ be the set of orbits in $\Gamma$ under this action.

\begin{lem}
Let $k = \mathbb{F}_q$ be a finite field, let $G$ be an algebraic group over $k$, and let $X$ be a scheme over $k$ with a left action of $G$. Let $q'$ be a power of $q$. Let $C \in {}_G X(\mathbb{F}_{q'})$, and let $(T,f) = P(C)$ be the isomorphism class in $[X \backslash G](\bar{k})$ corresponding to $C$. Let $(\tilde{T},\tilde{f})$ be a model of $(T,f)$ over $\mathbb{F}_{q'}$, and let $\hat{M}$ be the $\mathbb{F}_{q'}$-group scheme $\recht{Aut}(\tilde{T},\tilde{f})^{\recht{red}}$. Regard the finite étale group scheme $\pi_0(\hat{M})$ as an abstract group with an action of $\recht{Gal}(\bar{\mathbb{F}}_q/\mathbb{F}_{q'})$.
\begin{enumerate}
\item There is a canonical bijective correspondence
$$\{(Y,g) \in [[X \backslash G](\mathbb{F}_{q'})] : (Y,g)_{\bar{k}} \cong (T,f)\} \stackrel{\sim}{\leftrightarrow} \recht{Conj}_{q'}(\pi_0(\hat{M})).$$
\item Suppose the identity component $M$ of $\hat{M}$ is unipotent. In the correspondence above, let $(Y,g)$ represent the isomorphism class in $[X \backslash G](\mathbb{F}_{q'})$ corresponding to $c \in \recht{Conj}_{q'}(\pi_0(\hat{M}))$. Then
$$\# \recht{Aut}(Y,g)(\mathbb{F}_{q'}) = q'^{\recht{dim}(\hat{M})} \cdot \frac{\#\pi_0(\hat{M})}{\# c}.$$
\end{enumerate}
\end{lem}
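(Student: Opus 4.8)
The plan is to identify the set in part (1) with the set of $\sigma'$-twisted conjugacy classes in $\hat{M}(\bar{k})$, where $\sigma' = \recht{Fr}_{q'}$, then to cut this down to $\pi_0(\hat{M})$ via Lang's theorem, and finally to read off (2) by counting $\mathbb{F}_{q'}$-points on a twisted form of $\hat{M}$. So for (1), I would first fix the model $(\tilde{T},\tilde{f})$, which exists by Proposition~\ref{stacksfod} since $C$ is $\recht{Gal}(\bar{k}/\mathbb{F}_{q'})$-fixed. Any object $(Y,g)$ of $[G\backslash X](\mathbb{F}_{q'})$ with $(Y,g)_{\bar{k}}\cong(T,f)$ is a twisted form of $(\tilde{T},\tilde{f})$, so, since $[G\backslash X]$ is a stack for the étale topology, Galois descent identifies the isomorphism classes of such $(Y,g)$ with $\recht{H}^1(\recht{Gal}(\bar{k}/\mathbb{F}_{q'}),\Aut_{\bar{k}}(\tilde{T},\tilde{f}))$, and $\Aut_{\bar{k}}(\tilde{T},\tilde{f})=\hat{M}(\bar{k})$ because $\bar{k}$ is reduced. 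By Lemma~\ref{cocykel} a cocycle is determined by its value $m:=z(\sigma')\in\hat{M}(\bar{k})$, and two cocycles are cohomologous exactly when their values are $\sigma'$-twisted conjugate; hence the set in (1) is the set of $\sigma'$-twisted conjugacy classes in $\hat{M}(\bar{k})$, and it remains to show that the projection $\hat{M}(\bar{k})\twoheadrightarrow\pi_0(\hat{M})(\bar{k})$ is a bijection on such classes, the target being $\recht{Conj}_{q'}(\pi_0(\hat{M}))$ by definition.

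For that, surjectivity is immediate, and for injectivity suppose $m,m'\in\hat{M}(\bar{k})$ project into the same class. After a twisted conjugation we may assume they have the same image in $\pi_0(\hat{M})(\bar{k})$, so $m'=nm$ with $n\in M(\bar{k})$. Since $M$ is normal in $\hat{M}$, conjugation by $m$ is an automorphism of $M$ over $\bar{k}$, so $F\colon h\mapsto m\,({}^{\sigma'}h)\,m^{-1}$ is a Steinberg endomorphism of $M$ (a Frobenius followed by an automorphism); as $M$ is connected, Lang's theorem makes $h\mapsto h\,F(h)^{-1}$ surjective on $M(\bar{k})$, so there is $h\in M(\bar{k})$ with $h\,m\,({}^{\sigma'}h^{-1})\,m^{-1}=n$, that is, $h\,m\,({}^{\sigma'}h^{-1})=m'$. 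Thus $m$ and $m'$ are $\sigma'$-twisted conjugate in $\hat{M}(\bar{k})$, which proves (1).

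For (2), take a cocycle $z$ with $z(\sigma')=m$ representing $c$. As $(Y,g)$ is the twist of $(\tilde{T},\tilde{f})$ by $z$, its automorphism group scheme is the twist of $\Aut(\tilde{T},\tilde{f})$ by the conjugation action of $z$; since $\mathbb{F}_{q'}$ is a field the $\mathbb{F}_{q'}$-points only depend on the reduced subscheme, so $\#\Aut(Y,g)(\mathbb{F}_{q'})=\#({}_z\hat{M})(\mathbb{F}_{q'})$ with ${}_z\hat{M}$ the corresponding twist of the smooth group $\hat{M}$. Its identity component ${}_zM$ is a connected unipotent group over $\mathbb{F}_{q'}$ of dimension $\dim\hat{M}$; being split over the perfect field $\mathbb{F}_{q'}$ it has exactly $q'^{\dim\hat{M}}$ rational points, while $\recht{H}^1(\mathbb{F}_{q'},{}_zM)=0$ by Lang's theorem, so $1\to{}_zM\to{}_z\hat{M}\to\pi_0({}_z\hat{M})\to1$ gives $\#({}_z\hat{M})(\mathbb{F}_{q'})=q'^{\dim\hat{M}}\cdot\#\pi_0({}_z\hat{M})(\mathbb{F}_{q'})$. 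Finally $\pi_0({}_z\hat{M})(\mathbb{F}_{q'})$ consists of the $x\in\pi_0(\hat{M})(\bar{k})$ with ${}^{\sigma'}x=\bar{m}^{-1}x\bar{m}$, where $\bar{m}$ is the image of $m$ in $\pi_0(\hat{M})(\bar{k})$; this is exactly the stabiliser of $\bar{m}$ for the action $\gamma\cdot\gamma'=\gamma\gamma'\recht{Fr}_{q'}(\gamma)^{-1}$ defining $\recht{Conj}_{q'}(\pi_0(\hat{M}))$, hence has $\#\pi_0(\hat{M})/\#c$ elements by orbit--stabiliser, and combining the displays yields the formula.

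The main obstacle is the step in (1) reducing $\sigma'$-twisted conjugacy in the possibly positive-dimensional group $\hat{M}(\bar{k})$ to twisted conjugacy in the finite group $\pi_0(\hat{M})(\bar{k})$: this is where connectedness of $M$ and Lang's theorem are essential, and the point to be checked with care is that $\recht{int}_m\circ\sigma'$ really is a Steinberg endomorphism of $M$. By contrast the unipotence hypothesis plays no role in (1); it is needed only in (2), to ensure that $\#({}_zM)(\mathbb{F}_{q'})$ is the clean power $q'^{\dim\hat{M}}$ rather than some other polynomial in $q'$.
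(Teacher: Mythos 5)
Your proof is correct and follows essentially the same route as the paper: classify the forms by $\recht{H}^1(\mathbb{F}_{q'},\hat{M})$, use Lang's theorem on the connected identity component to reduce to twisted conjugacy in $\pi_0(\hat{M})$, and count points on the twisted group via the unipotent identity component (your appeal to splitness of unipotent groups over a perfect field replaces the paper's citation of Rosenlicht, to the same effect). If anything, your explicit twisted-Lang argument for injectivity of $\recht{H}^1(\hat{M})\to\recht{H}^1(\pi_0(\hat{M}))$ is more careful than the paper's exact-sequence remark, which only directly handles the fibre over the base point.
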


\begin{proof} { \ }
\begin{enumerate}
\item By \cite[Théorème 2.5.1]{giraud1966} the forms of $(T,f)$ over $\mathbb{F}_{q'}$ are classified by $\recht{H}^1(\mathbb{F}_{q'},\recht{Aut}(\tilde{T},\tilde{f}))$. Since $\recht{H}^1(\mathbb{F}_{q'},\recht{Aut}(\tilde{T},\tilde{f}))$ only depends on the abstract group $\recht{Aut}(\tilde{T},\tilde{f})(\bar{\mathbb{F}}_q)$ together with its Galois action, this is equal to $\recht{H}^1(\mathbb{F}_{q'},\hat{M})$. We have an exact sequence
$$\recht{H}^1(\mathbb{F}_{q'},M)  \rightarrow \recht{H}^1(\mathbb{F}_{q'},\hat{M}) \rightarrow \recht{H}^1(\mathbb{F}_{q'},\pi_0(\hat{M})).$$
By Lang's theorem, the first term is trivial, and by \cite[Corollary III.2.4.2]{serre1997} the last map is surjective. We conclude that $\recht{H}^1(\mathbb{F}_{q'},\hat{M}) = \recht{H}^1(\mathbb{F}_{q'},\pi_0(\hat{M}))$. By Lemma \ref{cocykel} the set $\recht{Z}^1(\mathbb{F}_{q'},\pi_0(\hat{M}))$ corresponds bijectively to $\pi_0(\hat{M})$, by sending a cocycle $z$ to $z(\recht{Fr}_{q'})$. A simple calculation shows that two cocycles $z,z'\in \recht{Z}^1(\mathbb{F}_{q'},\pi_0(\hat{M}))$ give rise to the same class in $\recht{H}^1(\mathbb{F}_{q'},\pi_0(\hat{M}))$ if and only if $z(\recht{Fr}_{q'})$ and $z'(\recht{Fr}_{q'})$ are the same in $\recht{Conj}_{q'}(\pi_0(\hat{M}))$.
\item Let $\hat{M}_c$, with identity component $M_c$, be the reduced automorphism group scheme of $(Y,g)$. We get a short exact sequence
$$1 \rightarrow M_c(\mathbb{F}_{q'}) \rightarrow \hat{M}_c(\mathbb{F}_{q'}) \rightarrow \pi_0(\hat{M}_c)(\mathbb{F}_{q'}) \rightarrow \recht{H}^1(\mathbb{F}_{q'},M_c).$$
Again by Lang's theorem the last term is trivial, hence $\#\hat{M}_c(\mathbb{F}_{q'}) =  \#\hat{M}^0_c(\mathbb{F}_{q'}) \cdot \# \pi_0(\hat{M}_c)(\mathbb{F}_{q'})$. We know that the identity component $M_c$ of $\hat{M}_c$ is a form of $M$ over $\mathbb{F}_{q'}$. By assumption $M$ is unipotent, hence so is $M_c$. This implies that $M_c \cong \mathbb{A}^{\recht{dim}(\hat{M})}$ as varieties over $\mathbb{F}_{q'}$ (see \cite[Theorem 5]{rosenlicht1963}), so $\#M_c(\mathbb{F}_{q'}) = q'^{\recht{dim}(M)}$. Let $y \in c \subset \pi_0(\hat{M})$. Then as an abstract group $\pi_0(M_c) \cong \pi_0(M)$, but the Galois action is given by $\tilde{\sigma}(x) = y \recht{Fr}_{q'}(x) y^{-1}$, for all $x \in \pi_0(\hat{M})$, where $\recht{Fr}_{q'}$ is the regular Galois action on $\pi_0(\hat{M})$. Hence
\begin{align*}
\# \pi_0(\hat{M}_c) &= \#\{x \in \pi_0(\hat{M}): \tilde{\sigma}(x) = x\}\\
&= \#\{x \in \pi_0(\hat{M}): x^{-1} y \recht{Fr}_{q'}(x) = y\} \\
&= \# \recht{Stab}_{\pi_0(\hat{M})}(y) \\
&= \frac{\# \pi_0(\hat{M})}{\# c},
\end{align*}
from which we see that $\#\hat{M}_c(\mathbb{F}_{q'}) = q'^{\recht{dim}(\hat{M})} \cdot \frac{\#\pi_0(\hat{M})}{\# c}.$ \qedhere
\end{enumerate}
\end{proof}

\begin{proof}[Proof of Theorem \ref{zetaquotient}]
To prove this theorem, we need to prove that
$$\sum_{(T,f) \in [[G \backslash X](\mathbb{F}_{q^v})]} \frac{1}{\#\recht{Aut}(T,f)(\mathbb{F}_{q^v})} = \sum_{C \in {}_G X(\mathbb{F}_{q^v})} q^{-\recht{dim}(\recht{Aut}(P(C)))\cdot v}.$$
To prove this, choose for every $v \geq 1$ and every $C \in {}_G X(\mathbb{F}_{q^v})$ a model for $P(C)$ over $\mathbb{F}_{q^v}$, and let $\hat{M}_{C}$ be the automorphism group scheme of this model. Then
\begin{align*}
\sum_{(T,f) \in [[G \backslash X](\mathbb{F}_{q^v})]} \frac{1}{\#\recht{Aut}(T,f)(\mathbb{F}_{q^v})} &=  \sum_{C \in {}_G X} \left( \sum_{\substack{(T,f) \in [[G \backslash X](\mathbb{F}_{q^v})] : \\ (T,f)_{\bar{k}} \cong P(C)}} \frac{1}{\#\recht{Aut}(T,f)(\mathbb{F}_{q^v})} \right)\\
&= \sum_{C \in {}_G X(\mathbb{F}_{q^v})} \left( \sum_{c \in \recht{Conj}_{q^v}(\pi_0(\hat{M}_C))} q^{-v \cdot \recht{dim}(M_{\bar{x}})} \cdot \frac{\# c}{\# \pi_0(\hat{M}_{\bar{x}})} \right)\\
&= \sum_{C \in {}_G X(\mathbb{F}_{q^v})}  q^{-v \cdot \recht{dim}(\hat{M}_{\bar{x}})},
\end{align*}
as was to be shown.
\end{proof}

\subsection{Quotients of affine space}

We now apply the knowledge gained in the previous section to the case where $X$ is an affine space over $k$. This example will be of use when discussing stacks of truncated Barsotti--Tate groups in section \ref{sectieBT}.

\begin{thm} \label{affienquotient}
Let $k = \mathbb{F}_q$ be a finite field. Let $G'$ be a connected algebraic group over $k$, and let $G \subset G'$ be a linear algebraic subgroup whose identity component is unipotent. Let $X$ be an affine space over $k$ with an action of $G'$. Then
$$Z([G \backslash X],t) = \frac{1}{1-q^{\recht{dim}(X)-\recht{dim}(G)}t}.$$
\end{thm}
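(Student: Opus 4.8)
The plan is to compute, for each $v\geq 1$, the stacky point count $N_v := \sum_{x\in[[G\backslash X](\mathbb{F}_{q^v})]}\#\recht{Aut}(x)(\mathbb{F}_{q^v})^{-1}$ and to show that $N_v = q^{v(\dim X-\dim G)}$; since $Z([G\backslash X],t) = \recht{exp}\bigl(\sum_{v\geq 1}\tfrac{t^v}{v}N_v\bigr)$, this gives $Z([G\backslash X],t) = (1-q^{\dim X-\dim G}t)^{-1}$ at once. One could instead invoke Theorem \ref{zetaquotient} and Remark \ref{zetaopm} — their hypothesis holds here, since each $\recht{Aut}(P(C))^{\recht{red}}$ is, up to isomorphism, a closed subgroup of $G^{\recht{red}}_{\bar k}$ and so has unipotent identity component — which reduces the statement to the identity $\sum_{C\in{}_G X(\mathbb{F}_{q^v})}q^{v\dim C}=q^{v\dim X}$; but I note that this identity is \emph{false} summand by summand (the individual $\#C(\mathbb{F}_{q^v})$ are not equal to $q^{v\dim C}$ in general), so the connectedness of $G'$ must be used globally rather than orbit by orbit.

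The engine is the inclusion $G\hookrightarrow G'$ together with Lang's theorem: since $G'$ is connected, every $G'$-torsor over $\mathbb{F}_{q^v}$ is trivial. Given an object $(T,f)$ of $[G\backslash X](\mathbb{F}_{q^v})$, the induced $G'$-torsor $T\times^{G}G'$ is therefore trivial; a choice of trivialisation identifies $T$ with a $G$-sub-torsor of $G'$, that is, with a point of $(G'/G)(\mathbb{F}_{q^v})$, and identifies $f$ (extended $G'$-equivariantly to $G'$) with a point of $X(\mathbb{F}_{q^v})$. Two trivialisations differ by $G'(\mathbb{F}_{q^v})$, so this yields an equivalence of groupoids
\[
[G\backslash X](\mathbb{F}_{q^v}) \;\simeq\; \bigl[\, \bigl( (G'/G)(\mathbb{F}_{q^v}) \times X(\mathbb{F}_{q^v}) \bigr) / G'(\mathbb{F}_{q^v}) \,\bigr],
\]
with $G'(\mathbb{F}_{q^v})$ acting diagonally. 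Passing to groupoid cardinalities, and using that $X$ is affine space,
\[
N_v \;=\; \frac{\#(G'/G)(\mathbb{F}_{q^v})\cdot\#X(\mathbb{F}_{q^v})}{\#G'(\mathbb{F}_{q^v})} \;=\; \frac{\#(G'/G)(\mathbb{F}_{q^v})\cdot q^{v\dim X}}{\#G'(\mathbb{F}_{q^v})}.
\]

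It remains to evaluate $\#(G'/G)(\mathbb{F}_{q^v})$. The same trivialisation argument, now applied to the classifying stack, identifies $\recht{B}G(\mathbb{F}_{q^v})$ with $[(G'/G)(\mathbb{F}_{q^v})/G'(\mathbb{F}_{q^v})]$ — a $G$-torsor over $\mathbb{F}_{q^v}$ is the same datum as a $G$-sub-torsor of the trivial $G'$-torsor, up to $G'(\mathbb{F}_{q^v})$ — so $\#(G'/G)(\mathbb{F}_{q^v}) = \#G'(\mathbb{F}_{q^v})\cdot\#\recht{B}G(\mathbb{F}_{q^v})$. And $\#\recht{B}G(\mathbb{F}_{q^v}) = q^{-v\dim G}$: this is exactly the case $X = \Spec\mathbb{F}_q$ of the theorem, which drops out of Theorem \ref{zetaquotient} and Remark \ref{zetaopm} (the hypothesis holding since $\recht{Aut}(P) = G$ has unipotent $G^{\recht{red},0}$), and it is here that the unipotence of $G^0$ intervenes — via Lang's theorem and the fact, already used in the proof of Theorem \ref{zetaquotient}, that a smooth connected unipotent group of dimension $d$ over $\mathbb{F}_{q^v}$ has $q^{vd}$ rational points. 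Substituting gives $N_v = \#G'(\mathbb{F}_{q^v})\cdot q^{-v\dim G}\cdot q^{v\dim X}/\#G'(\mathbb{F}_{q^v}) = q^{v(\dim X-\dim G)}$, whence $Z([G\backslash X],t) = \recht{exp}\bigl(\sum_{v\geq 1}\tfrac{t^v}{v}q^{v(\dim X-\dim G)}\bigr) = (1-q^{\dim X-\dim G}t)^{-1}$, as claimed. The part I expect to take the most care is the groupoid equivalence of the second paragraph — precisely, checking that extending $f$ to $G'$ and tracking the dependence on the chosen trivialisation really produces the diagonal $G'(\mathbb{F}_{q^v})$-action on the product — but this is elementary bookkeeping once Lang's triviality of $G'$-torsors is in hand.
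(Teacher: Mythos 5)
Your argument is correct, but it takes a genuinely different route from the paper's. The paper also starts from Theorem \ref{zetaquotient} and Remark \ref{zetaopm} (your justification that the hypothesis holds — stabilisers have reduced identity component contained in the unipotent $(G^0)^{\recht{red}}$ — is exactly the paper's), but it then really does prove the orbit-counting identity $\sum_{C \in {}_G X(\mathbb{F}_{q^v})} q^{v\recht{dim}(C)} = q^{v\recht{dim}(X)}$: this is Lemma \ref{unipotentebanen}, whose proof writes $G = G^0\cdot F$ with $F$ finite (Brion), counts rational $F$-orbits on affine space by a Burnside-type argument in which Lang's theorem for the connected $G'$ kills the relevant twists of $\mathbb{A}^n$ (Lemma \ref{eindigebanen}), and then uses Rosenlicht's theorem to show that each rational $G$-orbit $C$ contains exactly $q^{v\recht{dim}(C)}$ rational $F$-orbits. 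So your caveat is well taken — the identity cannot be obtained orbit by orbit from $X(\mathbb{F}_{q^v})=\bigsqcup_C C(\mathbb{F}_{q^v})$ — but the identity itself is true and is the technical heart of the paper's proof. Your torsor-pushout argument sidesteps it entirely: Lang-triviality of $T\times^G G'$ yields the groupoid equivalence with $\bigl[\bigl((G'/G)(\mathbb{F}_{q^v})\times X(\mathbb{F}_{q^v})\bigr)/G'(\mathbb{F}_{q^v})\bigr]$, and everything reduces to the mass formula $\#\recht{B}G(\mathbb{F}_{q^v}) = q^{-v\recht{dim}(G)}$, which is the $X=\recht{Spec}\,\mathbb{F}_q$ case of the already-proven Theorem \ref{zetaquotient}; there is no circularity. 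What your route buys is economy: connectedness of $G'$ is used exactly once and globally, and Brion's and Rosenlicht's theorems disappear from this proof (Rosenlicht survives only inside Theorem \ref{zetaquotient}). What the paper's route buys is the finer orbit-count of Lemma \ref{unipotentebanen}, which has independent interest. The points needing care in a full write-up are the ones you flag yourself — that the groupoid of triples $(T,f,\phi)$, with $\phi$ a trivialisation of the pushout, has no nontrivial automorphisms and that forgetting $\phi$ is a $G'(\mathbb{F}_{q^v})$-quotient — together with the smoothness/reducedness conventions the paper itself glosses over when identifying torsors with Galois cohomology classes.
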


For the proof we need some intermediate results.

\begin{lem} \label{eindigebanen}
Let $G'$ be a connected algebraic group over $k = \mathbb{F}_q$, acting on $\mathbb{A}^n_k$. Let $F \subset G'$ be a finite subgroup. Then for every $v \geq 1$ one has $\#{}_F\mathbb{A}^n(\mathbb{F}_{q^v}) = q^{nv}$ (using the notation of Remark \ref{fodopmerking}).
\end{lem}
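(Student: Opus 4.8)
The claim is that for a finite subgroup $F$ of a connected algebraic group $G'$ acting on affine space $\mathbb{A}^n_k$, the number of $\mathrm{Gal}(\bar{k}/\mathbb{F}_{q^v})$-fixed $F(\bar{k})$-orbits on $\mathbb{A}^n(\bar{k})$ equals $q^{nv}$. The starting observation is that since $G'$ is connected, the finite group $F$ acts trivially on the set of connected components of $\mathbb{A}^n$, but more to the point, $F$ being finite means every $F$-orbit in $\mathbb{A}^n(\bar{k})$ is finite. So I want to count $\mathrm{Gal}$-stable finite orbits. The natural approach is a Burnside/orbit-counting argument combined with a Lang-style fixed-point count.

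**Key steps.** First, I would reduce to counting actual fixed points rather than orbits. Replacing $q$ by $q^v$, it suffices to prove $\#{}_F\mathbb{A}^n(\mathbb{F}_q) = q^n$. Now ${}_F\mathbb{A}^n(\mathbb{F}_q) = (F(\bar{k}) \backslash \mathbb{A}^n(\bar{k}))^{\mathrm{Fr}_q}$; by Lemma \ref{cocykel} an element fixed by $\mathrm{Fr}_q$, represented by $x \in \mathbb{A}^n(\bar{k})$, has the property that $\mathrm{Fr}_q(x)$ lies in the same $F$-orbit, i.e.\ $\mathrm{Fr}_q(x) = g\cdot x$ for some $g \in F(\bar{k})$; such twisted fixed points, modulo $F$-conjugacy on the twist, are exactly $\mathrm{Conj}_q(F)$-indexed families, and each contributes the $\mathrm{Fr}_q$-fixed points of a twisted form. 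Concretely, for each $g \in F(\bar{k})$ form the twisted Frobenius $F_g := g\circ \mathrm{Fr}_q$ acting on $\mathbb{A}^n(\bar{k})$; this is still a Frobenius-like endomorphism of $\mathbb{A}^n$ as a variety over $k$ — more precisely it is the Frobenius of a twisted $\mathbb{F}_q$-form of $\mathbb{A}^n$. Since any form of affine space over a finite field is again affine space (one can invoke Rosenlicht as in the preceding proof, or the fact that the twist is defined by a $1$-cocycle valued in the finite group $F$, hence becomes trivial over a finite extension and the form is a smooth connected variety with $\#(\text{form})(\mathbb{F}_{q^m}) = q^{nm}$ for all $m$), the Lefschetz fixed point count gives $\#(\mathbb{A}^n)^{F_g} = q^n$ for every $g$.

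**Assembling the count.** Now apply the orbit-counting formula. The $\mathrm{Fr}_q$-fixed $F$-orbits are in bijection with $\mathrm{Conj}_q(F)$-indexed data, but cleaner is the direct identity
$$\#{}_F\mathbb{A}^n(\mathbb{F}_q) = \#\big(F(\bar{k})\backslash \mathbb{A}^n(\bar{k})\big)^{\mathrm{Fr}_q} = \frac{1}{\#F}\sum_{g \in F(\bar{k})} \#\{x \in \mathbb{A}^n(\bar{k}) : g\cdot \mathrm{Fr}_q(x) = x\} = \frac{1}{\#F}\sum_{g\in F(\bar{k})} q^n = q^n,$$
where the middle equality is the Burnside-type lemma for the action of the finite group $F \rtimes \langle \mathrm{Fr}_q\rangle$ (or rather: counting $\mathrm{Fr}_q$-stable orbits of a finite group action by averaging the number of $(g\circ\mathrm{Fr}_q)$-fixed points over $g \in F$), and the key input $\#\{x : g\cdot\mathrm{Fr}_q(x)=x\} = q^n$ is the twisted-form point count from the previous step. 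Running the same argument with $\mathbb{F}_{q^v}$ in place of $\mathbb{F}_q$ yields $\#{}_F\mathbb{A}^n(\mathbb{F}_{q^v}) = q^{nv}$.

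**Main obstacle.** The delicate point is justifying that each twisted count $\#\{x : \mathrm{Fr}_q(x) = g^{-1}\cdot x\}$ is exactly $q^n$, uniformly in $g$. One must be careful that $g\circ\mathrm{Fr}_q$ really is the Frobenius of an $\mathbb{F}_q$-form of $\mathbb{A}^n$ — this uses that the $G'$-action, restricted to the finite subgroup $F$, gives automorphisms of $\mathbb{A}^n$ defined over a finite extension, so the cocycle $\sigma \mapsto (\text{action of } z(\sigma))$ descends $\mathbb{A}^n_{\bar k}$ to a form $V_g/\mathbb{F}_q$ — and then that every such form of affine space over a finite field has exactly $q^n$ rational points. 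I would either cite Lang–Steinberg together with Rosenlicht's theorem that a form of $\mathbb{A}^n$ is isomorphic to $\mathbb{A}^n$ as a variety (as already used in the lemma preceding Theorem \ref{zetaquotient}), or argue via the Grothendieck–Lefschetz trace formula using that $H^\ast_c(V_{g,\bar k},\mathbb{Q}_\ell) = H^\ast_c(\mathbb{A}^n_{\bar k},\mathbb{Q}_\ell)$ is one-dimensional in degree $2n$ with Frobenius acting by $q^n$ and zero elsewhere, noting the twist only alters the Galois/Frobenius structure through the finite group $F$ which acts trivially on this top cohomology since it acts trivially up to homotopy — that triviality of the $F$-action on top compactly-supported cohomology of $\mathbb{A}^n$ is the one spot where I'd want to be precise.
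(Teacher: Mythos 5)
Your argument has the same skeleton as the paper's: both count the $\recht{Fr}_{q^v}$-stable $F$-orbits by the Burnside-type average $\frac{1}{\#F}\sum_{f\in F}\#\{x\in\mathbb{A}^n(\bar k):\sigma(x)=fx\}$ and reduce everything to showing that each twisted fixed-point set has exactly $q^{nv}$ elements. Where you diverge is in justifying that last count, and one of your two proposed justifications is wrong while the other is sound. The assertion that ``any form of affine space over a finite field is again affine space'' is not Rosenlicht's theorem --- his result concerns homogeneous spaces under connected unipotent groups, which is not the situation here --- and the paper explicitly remarks right after this lemma that the existence of nontrivial forms of $\mathbb{A}^n$ over a finite field is an open problem. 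The paper's actual argument is the one place the hypothesis $F\subset G'$ with $G'$ connected is used: the cocycle $\recht{Fr}_{q^v}\mapsto f^{-1}$, pushed from $F$ into $G'$, becomes a coboundary by Lang's theorem, so the induced twist of $\mathbb{A}^n$ is the trivial one; your write-up never invokes the connectedness of $G'$, so as stated that route has a gap. Your second justification, via Grothendieck--Lefschetz, is however correct and genuinely different: any $\mathbb{F}_{q^v}$-form $V$ of $\mathbb{A}^n$ is smooth and geometrically connected, so $H^i_c(V_{\bar k},\mathbb{Q}_\ell)$ vanishes except in degree $2n$, where the trace map identifies it with $\mathbb{Q}_\ell(-n)$ and any automorphism acts trivially (by functoriality of the trace map under proper push-forward, or by Poincar\'e duality against $H^0$ --- this is the precise form of your ``trivial up to homotopy''), whence $\#V(\mathbb{F}_{q^v})=q^{nv}$ whether or not the form is trivial. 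That version buys a slightly stronger lemma (no ambient connected group needed) at the cost of \'etale cohomology; the paper's version stays elementary but needs the hypothesis on $G'$. To make your proof complete, keep the cohomological argument and drop the appeal to Rosenlicht, or else reinstate Lang's theorem applied to the connected group $G'$.
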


\begin{proof}
For convenience, we identify $F$ with its group of $\bar{k}$-points; the difference between $F$ and $F^{\recht{red}}$ is not relevant for this lemma. Let $v$ be a positive integer, and let $\sigma$ be the action of $\recht{Fr}_{q^v}$ on $\mathbb{A}^n$. Let $x \in \mathbb{A}^n(\bar{k})$; then $F \cdot x$ is defined over $\mathbb{F}_{q^v}$ if and only if $\sigma(x) = f \cdot x$ for some $f \in F$. As such we find
$$\bigcup_{C \in {}_G \mathbb{A}^n(\mathbb{F}_{q^v})} C = \bigcup_{f \in F} \{x \in \mathbb{A}^n(\bar{k}): \sigma(x) = fx\}.$$
Let $C$ be an element of ${}_G \mathbb{A}^n(\mathbb{F}_{q^v})$, and let $c$ be an element of $C$. The number of sets of the form $\{x \in \mathbb{A}^n(\bar{k}): \sigma(x) = fx\}$ of which $c$ is an element, is equal to $\#\recht{Stab}_{F}(c) = \frac{\# F}{\# C}$. Hence in the sum
$$\sum_{f \in F} \#\{x \in \mathbb{A}^n(\bar{k}): \sigma(x) = fx\}$$
every orbit $C$ is counted exactly $\# F$ times. On the other hand, $\#\{x \in \mathbb{A}^n(\bar{k}): \sigma(x) = fx\}$ is equal to the number of $\mathbb{F}_{q^v}$-points of the twist of $\mathbb{A}^n$ over $\mathbb{F}_{q^v}$ given by the cocycle in $\recht{Z}^1(\mathbb{F}_{q^v},\recht{Aut}(\mathbb{A}^n))$ that sends $\recht{Fr}_{q^v}$ to $f^{-1}$. As a Galois cohomology class, this cocycle lies within the image of the composite morphism
$$\recht{H}^1(\mathbb{F}_{q^v},F) \rightarrow \recht{H}^1(\mathbb{F}_{q^v},G') \rightarrow \recht{H}^1(\mathbb{F}_{q^v},\recht{Aut}(\mathbb{A}^n)).$$
By Lang's theorem, the middle term is trivial, hence this is a trivial twist of $\mathbb{A}^n(\bar{k})$. This implies that $\#\{x \in \mathbb{A}^n(\bar{k}): \sigma(x) = fx\} = q^{nv}$, so
$$\#{}_F\mathbb{A}^n(\mathbb{F}_{q^v}) = \frac{1}{\# F} \sum_{f \in F} \#\{x \in \mathbb{A}^n(\bar{k}): \sigma(x) = fx\} = q^{nv}. \qedhere$$
\end{proof}

\begin{rem}
The condition that the action of $F$ on $\mathbb{A}^n_k$ factors through a connected algebraic group is somewhat artificial, but it ensures that the resulting twists of affine space are trivial. To my knowledge, it is an open problem whether there exist nontrivial twists of affine space over a finite field.
\end{rem}

\begin{lem} \label{unipotentebanen}
Let $k = \mathbb{F}_q$ be a finite field. Let $G'$ be a connected algebraic group over $k$, and let $G \subset G'$ be a linear algebraic subgroup whose identity component is unipotent. Let $X$ be an affine space over $k$ with an action of $G$. Then
$$\sum_{C \in {}_G X(\mathbb{F}_{q^v})} q^{\recht{dim}(C)\cdot v} = q^{\recht{dim}(X) \cdot v}.$$
\end{lem}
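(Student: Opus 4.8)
The plan is to reduce the statement to Lemma~\ref{eindigebanen} by stratifying $X$ according to the dimension of $G$-orbits. First I would decompose $X$ into locally closed, $G$-stable subvarieties $X_i$ on which all $G$-orbits have a fixed dimension $i$; concretely, one can take $X_i$ to be the set of points whose orbit has dimension exactly $i$, which is locally closed and $G$-stable. Since $G$ is contained in the connected group $G'$, and passing to $X_i$ does not change the orbit structure, the quantity $\sum_{C \in {}_GX(\mathbb{F}_{q^v})} q^{\dim(C)v}$ splits as $\sum_i q^{iv}\cdot \#\{C \in {}_GX_i(\mathbb{F}_{q^v})\}$. The same stratification applies to $\mathbb{A}^n$ directly, so the target identity $\sum_C q^{\dim(C)v} = q^{\dim(X)v}$ will follow once we know how many orbits of each dimension there are over each $\mathbb{F}_{q^v}$.

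To count the $\mathbb{F}_{q^v}$-rational orbits in each stratum, the idea is to compare with the case of a \emph{finite} group acting, where Lemma~\ref{eindigebanen} applies. The key input should be that since $G^0$ is unipotent, its orbits on an affine variety are closed (orbits of unipotent groups on affine varieties are always closed, by Kostant--Rosenlicht), so each $G$-orbit $C$ is itself an affine variety isomorphic to $G/\mathrm{Stab}$, and because $G^0$ is unipotent, $C$ is in fact an affine space bundle over the finite set $\pi_0(G)/(\text{something})$; more precisely $C \cong G^0/\mathrm{Stab}^0 \times (\text{finite quotient})$, and $G^0/\mathrm{Stab}^0$ is again unipotent hence $\cong \mathbb{A}^{\dim C}$ over $\bar k$. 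Then a point count on each orbit gives $\#C(\mathbb{F}_{q^v}) = q^{\dim(C)v}\cdot(\text{number of }\mathbb{F}_{q^v}\text{-rational components})$ when the orbit is $\mathbb{F}_{q^v}$-rational, and one wants to organize the $\bar k$-points of $X$ fibered over orbits. Summing $\#\mathbb{A}^n(\mathbb{F}_{q^v}) = q^{nv}$ over the partition of $\mathbb{A}^n(\bar k)$ into $G(\bar k)$-orbits, and grouping orbits into Galois orbits (so that an $\mathbb{F}_{q^v}$-rational orbit $C$ contributes $\#C(\mathbb{F}_{q^v})$, while a non-rational one contributes nothing to the rational points — this requires care), should yield $q^{nv} = \sum_{C \in {}_GX(\mathbb{F}_{q^v})} \#C(\mathbb{F}_{q^v})$, and then I would need $\#C(\mathbb{F}_{q^v}) = q^{\dim(C)v}$ for every rational orbit.

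An alternative, cleaner route avoiding the orbit-by-orbit point count: apply Lemma~\ref{eindigebanen} after reducing $G$ to a finite group. Since $G^0$ is unipotent and acts on the affine space $X$, consider the quotient $X \to X/G^0$ (a geometric quotient exists since $G^0$ is unipotent acting on affine $X$, and the quotient is again affine); by a theorem the quotient of affine space by a unipotent group need not be affine space, so this may not directly work. Instead I would argue dimension-by-dimension: on the stratum $X_i$, the orbit space ${}_GX_i$ is finite (each orbit has the same dimension in an irreducible-component-wise sense is false, so actually $X_i$ can have positive-dimensional orbit space) — so this reduction is not immediate either.

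I therefore expect the main obstacle to be the case of positive-dimensional families of orbits: one cannot simply invoke Lemma~\ref{eindigebanen} because the orbit set ${}_GX$ is generally infinite, and the weighting by $q^{\dim(C)v}$ must be handled via a genuine stratification and induction on $\dim X$. The cleanest argument is probably: use Kostant--Rosenlicht to see $G^0$-orbits are closed, pick the open stratum $U \subseteq X$ where orbits have maximal dimension $m$; then $U/G^0$ is a variety of dimension $\dim X - m$, the map $U \to U/G^0$ is (étale-locally, or after the finite $\pi_0(G)$-action) an affine-space bundle of rank $m$ so that $\#U(\mathbb{F}_{q^v})$ is computable from $\#(U/G)(\mathbb{F}_{q^v})$, and — crucially — one applies the inductive hypothesis or Lemma~\ref{eindigebanen}-type reasoning to the closed complement $X \setminus U$ (lower-dimensional, $G$-stable) and to the base $U/G$. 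Reconciling the resulting identities $\#U(\mathbb{F}_{q^v}) = q^{mv}\#(U/G)(\mathbb{F}_{q^v})$ with $\sum_{C} q^{\dim(C)v} = \#\{C \in {}_GU(\mathbb{F}_{q^v})\}\cdot q^{mv}$ and noting $\#(U/G)(\mathbb{F}_{q^v})$ counts exactly those $\mathbb{F}_{q^v}$-rational orbits, then induct on dimension, with the base case (a single orbit, finite stabilizer, i.e. essentially Lemma~\ref{eindigebanen}) already established.
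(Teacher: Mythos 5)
Your proposal does not close up, and the obstacle you flag with ``this requires care'' is in fact the entire content of the lemma. The central gap is this: because $G$ may be disconnected, a Galois-stable orbit $C \in {}_GX(\mathbb{F}_{q^v})$ need \emph{not} satisfy $\#C(\mathbb{F}_{q^v}) = q^{\dim(C)v}$. The orbit $C$ is a union of $G^0$-orbits permuted by $\pi_0(G)$ and by Frobenius; its rational points live only on the Frobenius-stable components (each of which is indeed $\cong \mathbb{A}^{\dim C}$ by Rosenlicht), so $\#C(\mathbb{F}_{q^v}) = k\,q^{\dim(C)v}$ where $k$ is the number of stable components, and $k$ can be $0$ or greater than $1$. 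Consequently the tautological identity $\sum_C \#C(\mathbb{F}_{q^v}) = q^{nv}$ does not give the weighted orbit count $\sum_C q^{\dim(C)v} = q^{nv}$, and the same defect breaks your claimed fibration identity $\#U(\mathbb{F}_{q^v}) = q^{mv}\#(U/G)(\mathbb{F}_{q^v})$. Your induction-on-dimension scheme has a second problem: the geometric quotient $U/G^0$ exists as a variety only on a dense open subset (Rosenlicht), and neither the closed complement $X\setminus U$ nor the base $U/G$ is an affine space, so you cannot reapply Lemma~\ref{eindigebanen} or the inductive hypothesis to them --- both are stated for affine space precisely because the proof of Lemma~\ref{eindigebanen} needs the connected overgroup $G'$ to kill all twists of $\mathbb{A}^n$ via Lang's theorem, and this fails for general strata.

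The paper's proof resolves the component-permutation issue by a different device, which you do not hit on: by Brion's theorem one writes $G = G^0\cdot F$ for a \emph{finite subgroup} $F \subset G$ (not the quotient $\pi_0(G)$ acting on some quotient space), keeps $F$ acting on $X$ itself, and uses Lemma~\ref{eindigebanen} to see there are exactly $q^{nv}$ rational $F$-orbits in $X$. The lemma then reduces to showing that each rational $G$-orbit $C$ contains exactly $q^{\dim(C)v}$ rational $F$-orbits; this is done by a Burnside-type average $\frac{1}{\#F'}\sum_{f'\in F'} \#\{x \in C^0 : \sigma(x) = ff'x\}$ over the stabiliser $F'$ of a component $C^0$, where each summand is computed as the point count of a twist of the homogeneous space $C^0$ under a twist of the connected unipotent group $G^0$, hence equals $q^{\dim(C)v}$ by Rosenlicht. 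It is exactly this averaging over the finite group, rather than a direct point count on $C$, that absorbs the ambiguity in the number of Frobenius-stable components. If you want to salvage your approach, you would need to import this step; the stratification by orbit dimension and the quotient construction are not needed and do not substitute for it.
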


\begin{proof}
Let $G^0$ be the identity component of $G$. By \cite[Theorem 1.1]{brion2015} there exists a finite subgroup $F \subset G$ such that $G = G^0 \cdot F$; as before, we identify $F$ with its set of $\bar{k}$-points. Let $v$ be a positive integer, and let $\sigma$ denote the action of $\recht{Fr}_{q^v}$ on $X$. By Lemma \ref{eindigebanen} there are precisely $q^{\recht{dim}(X) \cdot v}$ orbits in $X$ under the action of $F$, that are defined over $\mathbb{F}_{q^v}$. Hence it is enough to show that every $C \in {}_GX(\mathbb{F}_{q^v})$ contains exactly $q^{\recht{dim}(C)}$ of these orbits. Let $C \in {}_GX(\mathbb{F}_{q^v})$, and let $C^0$ be a connected component of $C$; this component is a homogeneous $G^0$-space. Let $f \in F(\bar{k})$ be such that $\sigma(C^0) = fC^0$. Let $x \in X(\bar{k})$ such that $x \in C^0$ and such that $F \cdot x$ is defined over $\mathbb{F}_{q^v}$. Let $F' = \recht{Stab}_{F}(C^0)$; then there is a $f' \in F'$ such that $\sigma(x) = ff'x$. Analogous to the proof of Lemma \ref{eindigebanen} we can consider the sum
$$\sum_{f' \in F'} \#\{x \in C^0(\bar{k}): \sigma(x) = ff'x\}.$$
As in the proof of Lemma \ref{eindigebanen}, each $F'$-orbit in $C^0(\bar{k})$, whose $F$-orbit in $C$ defined over $\mathbb{F}_{q^v}$, is counted exactly $\#F'$ times in this sum. Since the correspondence between $F$-orbits in $C$ defined over $\mathbb{F}_{q^v}$ and $F'$-orbits in $C^0$ whose associated $F$-orbit is defined over $\mathbb{F}_{q^v}$ is bijective, we now only have to prove that 
$$\sum_{f' \in F'} \#\{x \in C^0(\bar{k}): \sigma(x) = ff'x\} = \# F \cdot q^{\recht{dim}(C) \cdot v}.$$
To prove this, we define, for a fixed $f' \in F'$, a twisted Galois action on $C^0(\bar{k})$ by $\tilde{\sigma}(x) := (ff')^{-1}\sigma(x) \in C^0(\bar{k})$; this defines a variety $\tilde{C}^0$ over $\mathbb{F}_{q^v}$. This has a transitive action by a twist $\tilde{G}^0$ of $G^0$ over $\mathbb{F}_{q^v}$, where the Galois action on $\tilde{G}^0(\bar{k}) = G^0(\bar{k})$ is defined as $\tilde{\sigma}(g) = (ff')^{-1}\sigma(g)ff'$. Then $\tilde{G}^0$ is a connected unipotent group over $\mathbb{F}_{q^v}$, and as such its homogenous space $\tilde{C}^0$ is isomorphic (as a variety over $\mathbb{F}_{q^v}$) to $\mathbb{A}^{\recht{dim}(C)}_{\mathbb{F}_{q^v}}$ (see \cite[Theorem 5]{rosenlicht1963}). As such we find $\#\{x \in C^0(\bar{k}): \sigma(x) = ff'x\} =\#\tilde{C}^0(\mathbb{F}_{q^v}) = q^{\recht{dim}(C) \cdot v},$ which proves the lemma.
\end{proof}

\begin{proof}[Proof of Theorem \ref{affienquotient}] By Theorem \ref{zetaquotient}, Remark \ref{zetaopm} and Lemma \ref{unipotentebanen} we have
\begin{align*}
Z([G \backslash X],t) &= \recht{exp}\left(\sum_{v \geq 1} \frac{t^v}{v} \sum_{C \in {}_G X(\mathbb{F}_{q^v})} q^{-\recht{dim}(\recht{Aut}(P(C))) \cdot v}\right) \\
&= \recht{exp}\left(\sum_{v \geq 1} \frac{t^v}{v} \sum_{C \in {}_G X(\mathbb{F}_{q^v})} q^{(\recht{dim}(C)-\recht{dim}(G)) \cdot v}\right) \\
&= \recht{exp}\left(\sum_{v \geq 1} \frac{t^v}{v}  q^{(\recht{dim}(X)-\recht{dim}(G)) \cdot v}\right) \\
&= \frac{1}{1-q^{\recht{dim}(X)-\recht{dim}(G)}t}. \qedhere
\end{align*}

\end{proof}

\section{Weyl groups and Levi decompositions} \label{weylgroepen}

In this section we briefly review some relevant facts about Weyl groups and Levi decompositions, in particular those of nonconnected reductive groups.

\subsection{The Weyl group of a connected reductive group}

Let $G$ be a connected reductive algebraic group over a field $k$. For any pair $(T,B)$ of a Borel subgroup $B \subset G_{\bar{k}}$ and a maximal torus $T \subset B$, let $\Phi_{(T,B)}$ be the based root system of $G$ with respect to $(T,B)$, and let $W_{T,B}$ be the Weyl group of this based root system, i.e. the Coxeter group generated by the set $S_{T,B}$ of simple reflections. As an abstract group $W_{T,B}$ is isomorphic to $\recht{Norm}_{G(\bar{k})}(T(\bar{k}))/T(\bar{k})$. If $(T',B')$ is another choice of a Borel subgroup and a maximal torus, then there exists a $g \in G(\bar{k})$ such that $(T',B') = (gTg^{-1},gBg^{-1})$. Furthermore, such a $g$ is unique up to right multiplication by $T(\bar{k})$, which gives us a unique isomorphism $\Phi_{T,B} \stackrel{\sim}{\rightarrow} \Phi_{T',B'}$. As such, we can simply talk about \emph{the} based root system $\Phi$ of $G$, with corresponding Coxeter system $(W,S)$. By these canonical identifications $\Phi$, $W$ and $S$ come with an action of $\recht{Gal}(\bar{k}/k)$.\\

The set of parabolic subgroups of $G_{\bar{k}}$ containing $B$ is classified by the power set of $S$, by associating to $I \subset S$ the parabolic subgroup $P = L \cdot B$, where $L$ is the reductive group with maximal torus $T$ whose root system is $\Phi_{I}$, the root subsystem of $\Phi$ generated by the roots whose associated reflections lie in $I$. We call $I$ the \emph{type} of $P$. Let $U = \recht{R}_{\recht{u}}P$ be the unipotent radical of $P$; then $P = L \ltimes U$ is the Levi decomposition of $P$ with respect to $T$ (see subsection \ref{sectielevi}). For every subset $I \subset S$, let $W_I$ be the subgroup of $W$ generated by $I$; it is the Weyl group of the root system $\Phi_I$, with $I$ as its set of simple reflections. \\

For $w \in W$, define the \emph{length} $\ell(w)$ of $w$ to be the minimal integer such that there exist $s_1,s_2,...,s_{\ell(w)} \in S$ such that $w = s_1s_2\cdots s_{\ell(w)}$. Since $\recht{Gal}(\bar{k}/k)$ acts on $W$ by permuting $S$, the length is Galois invariant. Let $I,J \subset S$; then every (left, double, right) coset of the form $W_Iw, W_IwW_J$ or $wW_J$ has a unique element of minimal length, and we denote the subsets of $W$ of elements of minimal length in their (left, double, right) cosets by ${}^IW, {}^IW^J$, and $W^J$.

\begin{prop} \label{deng418}
(See \cite[Proposition 4.18]{deng2008}) Let $I,J \subset S$. Let $x \in {}^IW^J$, and set $I_x = J \cap x^{-1}Ix \subset W$. Then for every $w \in W_IxW_J$ there exist unique $w_I \in W_I$, $w_J \in {}^{I_x}W_J$ such that $w = w_Ixw_J$. Furthermore $\ell(w) = \ell(w_I)+\ell(x)+\ell(w_J)$. \qed
\end{prop}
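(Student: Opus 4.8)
The plan is to deduce the statement from Kilmoyer's lemma on intersections of parabolic subgroups, after which existence, uniqueness and the length identity are essentially formal. Fix the positive system $\Phi^+\subset\Phi$ and simple roots $(\alpha_s)_{s\in S}$ determined by the chosen Borel, write $\Phi_J^+=\Phi_J\cap\Phi^+$, and recall the standard facts used freely below: $w\in W^J$ iff $w(\Phi_J^+)\subseteq\Phi^+$; ${}^IW=(W^I)^{-1}$; $\ell(wv)=\ell(w)+\ell(v)$ whenever $w\in W^J$ and $v\in W_J$ (and its left-handed analogue); $s_{w(\alpha)}=ws_\alpha w^{-1}$; every element of $W_J$ stabilises $\Phi^+\setminus\Phi_J^+$; and a reflection $s_\gamma$ lies in $W_I$ iff $\gamma\in\Phi_I$. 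The key input is \emph{Kilmoyer's lemma}: for $x\in{}^IW^J$ one has $W_J\cap x^{-1}W_Ix=W_{I_x}$, and moreover any $s\in J$ with $xsx^{-1}\in W_I$ already has $xsx^{-1}\in I$, so that $I_x=\{s\in J:xsx^{-1}\in I\}=J\cap x^{-1}Ix$ is unambiguous. I expect the proof of this lemma to be the main obstacle; the rest is bookkeeping.

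To prove Kilmoyer's lemma I would first establish the ``moreover'': for such an $s$, the root $\gamma:=x(\alpha_s)$ is positive since $x\in W^J$ and lies in $\Phi_I$ since $xsx^{-1}=s_\gamma\in W_I$; noting that $x^{-1}$ sends each simple root of $\Phi_I$ to a positive root (because $x\in{}^IW$) and hence carries $\Phi_I^+$ into $\Phi^+$ (and thus $\Phi_I^-$ into $\Phi^-$), a decomposition $\gamma=\gamma_1+\gamma_2$ into positive roots of $\Phi_I$ would present the simple root $\alpha_s=x^{-1}(\gamma_1)+x^{-1}(\gamma_2)$ as a sum of two positive roots, which is impossible; hence $\gamma=\alpha_r$ for some $r\in I$ and $xsx^{-1}=r\in I$. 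The inclusion $W_{I_x}\subseteq W_J\cap x^{-1}W_Ix$ is then clear. For the reverse inclusion I would induct on $\ell(w)$ for $w\in W_J\cap x^{-1}W_Ix$, $w\ne e$: pick $r\in I$ with $(xwx^{-1})(\alpha_r)\in\Phi_I^-$; applying $x^{-1}$ gives $w(\beta)\in\Phi^-$ for $\beta:=x^{-1}(\alpha_r)\in\Phi^+$, and since $w\in W_J$ this forces $\beta\in\Phi_J^+$; then $s_\beta=x^{-1}rx\in W_J$, so the ``moreover'' applied to $x^{-1}\in{}^JW^I$ yields $x^{-1}rx=t\in J\cap x^{-1}Ix=I_x$, whence $\beta=\alpha_t$ and $w(\alpha_t)\in\Phi^-$; consequently $wt\in W_J\cap x^{-1}W_Ix$ has strictly smaller length, so $wt\in W_{I_x}$ by induction and $w=(wt)t\in W_{I_x}$. (This argument is classical; compare the references around \cite[Proposition 4.18]{deng2008}.)

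Granting the lemma, I would conclude as follows. \emph{Existence}: write $w=axb$ with $a\in W_I$, $b\in W_J$, and factor $b=cw_J$ with $c\in W_{I_x}$ and $w_J\in{}^{I_x}W_J$ the minimal-length representative of the coset $W_{I_x}b$ in $W_J$; since $xcx^{-1}\in W_I$, setting $w_I:=a\,xcx^{-1}\in W_I$ gives $w=w_Ixw_J$. \emph{Length}: as $x\in W^J$ and $w_J\in W_J$, $\ell(xw_J)=\ell(x)+\ell(w_J)$; moreover $xw_J\in{}^IW$, since for $s\in I$ the root $\beta=x^{-1}(\alpha_s)\in\Phi^+$ satisfies $w_J^{-1}(\beta)\in\Phi^+$ — automatically when $\beta\notin\Phi_J$ (as $w_J$ stabilises $\Phi^+\setminus\Phi_J^+$), and when $\beta\in\Phi_J^+$ we have $x^{-1}sx=s_\beta\in W_J$, so by the ``moreover'' $x^{-1}sx=t\in I_x$, hence $\beta=\alpha_t$ and $w_J^{-1}(\alpha_t)\in\Phi^+$ precisely because $w_J\in{}^{I_x}W_J$ — so $\ell(w)=\ell(w_I)+\ell(xw_J)=\ell(w_I)+\ell(x)+\ell(w_J)$. \emph{Uniqueness}: if $w_Ixw_J=w_I'xw_J'$ are two factorisations of the stated form, then $x^{-1}(w_I')^{-1}w_I x=w_J'w_J^{-1}$ lies in $W_J\cap x^{-1}W_Ix=W_{I_x}$, so $w_J$ and $w_J'$ represent the same left $W_{I_x}$-coset in $W_J$ and, both being the minimal representative, coincide; then $w_Ix=w_I'x$ gives $w_I=w_I'$.
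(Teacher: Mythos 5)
Your proof is correct, and there is nothing in the paper to compare it against: the paper states this proposition with an immediate \qed, deferring entirely to \cite[Proposition 4.18]{deng2008}, so you have supplied a complete argument where the paper only supplies a citation. Your route is the standard one: reduce everything to Kilmoyer's theorem $W_J\cap x^{-1}W_Ix=W_{I_x}$ together with the sharpening that any $s\in J$ with $xsx^{-1}\in W_I$ already satisfies $xsx^{-1}\in I$, and then existence, the length formula (via $xw_J\in{}^IW$ and $\ell(xw_J)=\ell(x)+\ell(w_J)$), and uniqueness all follow formally; I checked each step and they are sound. Two minor remarks. First, your proof of the ``moreover'' uses that a non-simple positive root of $\Phi_I$ is a sum of two positive roots of $\Phi_I$; this is a crystallographic fact, harmless here because $W$ is the Weyl group of a reductive group, but Kilmoyer's lemma holds for arbitrary Coxeter systems and there the usual argument is purely length-theoretic: from $xs=(xsx^{-1})x$ with $xsx^{-1}\in W_I$ and $x\in{}^IW$ one gets $\ell(xsx^{-1})+\ell(x)=\ell(xs)=\ell(x)+1$ (the last equality since $x\in W^J$), so $xsx^{-1}$ has length $1$ and lies in $S\cap W_I=I$. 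That one-line computation would also shorten your argument. Second, your ``left $W_{I_x}$-coset'' means the coset $W_{I_x}w_J$, which matches the paper's (slightly nonstandard) convention that ${}^{I_x}W_J$ consists of minimal-length elements of cosets $W_{I_x}w$; it is worth being explicit about this, since the whole uniqueness step hinges on $w_J'w_J^{-1}\in W_{I_x}$ placing $w_J$ and $w_J'$ in the same such coset.
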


\begin{lem} \label{pwz28}
(See \cite[Proposition 2.8]{pinkwedhornziegler2011}) Let $I,J \subset S$. Every element $w \in {}^IW$ can uniquely be written as $xw_J$ for some $x \in {}^IW^J$ and $w_J \in {}^{I_x}W_J$. \qed
\end{lem}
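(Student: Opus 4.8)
The plan is to deduce the statement directly from Proposition \ref{deng418} together with the defining property of ${}^IW$. Given $w \in {}^IW$, there is a unique $x \in {}^IW^J$ with $w \in W_IxW_J$, and Proposition \ref{deng418} then produces unique $w_I \in W_I$ and $w_J \in {}^{I_x}W_J$ with $w = w_I x w_J$ and $\ell(w) = \ell(w_I) + \ell(x) + \ell(w_J)$. The heart of the argument is to show that $w_I = e$; once this is established, $w = xw_J$ is the required factorisation.

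To prove $w_I = e$, I would consider the element $xw_J = w_I^{-1}w$, which lies in the same left coset $W_Iw$ and, since $xw_J = e\cdot x\cdot w_J$, in the same double coset $W_IxW_J$. Applying Proposition \ref{deng418} to $xw_J$, the uniqueness clause forces its $W_I$-component to be $e$ (because $e \in W_I$ and $w_J \in {}^{I_x}W_J$ already give a valid factorisation), whence $\ell(xw_J) = \ell(x) + \ell(w_J)$. Combining this with $\ell(w) = \ell(w_I) + \ell(x) + \ell(w_J)$ yields $\ell(w) = \ell(w_I) + \ell(xw_J)$. On the other hand, $w$ has minimal length in $W_Iw$ by definition of ${}^IW$, and $xw_J \in W_Iw$, so $\ell(w) \leq \ell(xw_J)$. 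Therefore $\ell(w_I) \leq 0$, i.e. $w_I = e$.

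For uniqueness of the pair $(x,w_J)$: if $w = xw_J = x'w_J'$ with $x,x' \in {}^IW^J$, $w_J \in {}^{I_x}W_J$ and $w_J' \in {}^{I_{x'}}W_J$, then $w$ lies in both $W_IxW_J$ and $W_Ix'W_J$; since the double cosets partition $W$ and each contains a unique element of minimal length, we get $x = x'$. Cancelling $x$ on the left then gives $w_J = w_J'$, which finishes the proof.

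I do not expect a serious obstacle here: the whole argument is formal once Proposition \ref{deng418} is in hand. The only step demanding a little care is the length bookkeeping that pins down $w_I = e$ — specifically, keeping track that $xw_J$ lies simultaneously in the left $W_I$-coset of $w$ (so the minimality hypothesis on $w$ applies) and in the double coset $W_IxW_J$ (so the uniqueness clause of Proposition \ref{deng418} applies to it as well).
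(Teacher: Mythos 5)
Your proof is correct. Note that the paper itself offers no argument for this lemma at all --- it is simply cited from \cite[Proposition 2.8]{pinkwedhornziegler2011} and closed with \qed --- so there is no ``paper proof'' to compare against; what you have written is a valid self-contained derivation from Proposition \ref{deng418}. The key step, pinning down $w_I = e$ by applying the uniqueness and length clauses of Proposition \ref{deng418} to the element $xw_J = w_I^{-1}w$ (which lies both in $W_Iw$, so the minimality of $\ell(w)$ applies, and in $W_IxW_J$, so the factorisation $e\cdot x\cdot w_J$ must be \emph{the} factorisation), is exactly right, and the uniqueness argument via the partition of $W$ into double cosets is likewise sound.
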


\begin{lem} \label{pwz213}
(See \cite[Lemma 2.13]{pinkwedhornziegler2011}) Let $I,J \subset S$. Let $w \in {}^IW$ and write $w = xw_J$ with $x \in {}^IW^J$, $w_J \in W_J$. Then 

\[
\pushQED{\qed}
\ell(w) = \#\{\alpha \in \Phi^+ \backslash \Phi_J : w\alpha \in \Phi^- \backslash \Phi_I\}. \qedhere
\popQED
\]

\end{lem}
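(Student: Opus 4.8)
The plan is to identify the right‑hand set with the inversion set $N(w) := \{\alpha \in \Phi^+ : w\alpha \in \Phi^-\}$ of $w$, and then to appeal to the classical facts that $\ell(v) = \#N(v)$ for every $v \in W$ and that $N$ is multiplicative along reduced factorisations, i.e. $N(v_1v_2) = N(v_2) \sqcup v_2^{-1}N(v_1)$ whenever $\ell(v_1v_2) = \ell(v_1)+\ell(v_2)$. It therefore suffices to prove the set equality $N(w) = \{\alpha \in \Phi^+ \setminus \Phi_J : w\alpha \in \Phi^- \setminus \Phi_I\}$, and I would do this in two reductions: first removing the $\Phi_I$‑condition, then removing the $\Phi_J$‑condition.

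The $\Phi_I$‑reduction uses only that $w \in {}^IW$. Applying the multiplicativity formula to $w_{0,I}\cdot w$, where $w_{0,I}$ is the longest element of $W_I$ (so that $N(w_{0,I}) = \Phi_I^+$) and $\ell(w_{0,I}w) = \ell(w_{0,I})+\ell(w)$, one reads off the standard inclusion $w^{-1}(\Phi_I^+) \subseteq \Phi^+$. Hence if $\alpha \in \Phi^+$ with $w\alpha \in \Phi_I \cap \Phi^- = \Phi_I^-$, then $-\alpha = w^{-1}(-w\alpha) \in w^{-1}(\Phi_I^+) \subseteq \Phi^+$, which is impossible; so the condition $w\alpha \notin \Phi_I$ is automatic on $N(w)$, and $N(w) = \{\alpha \in \Phi^+ : w\alpha \in \Phi^- \setminus \Phi_I\}$.

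The $\Phi_J$‑reduction is the heart of the argument, and this is where the decomposition $w = xw_J$ of Lemma \ref{pwz28} enters, together with $\ell(w) = \ell(x)+\ell(w_J)$ from Proposition \ref{deng418} applied with $w_I = e$. Two structural inputs are needed: (a) $x \in W^J$ forces $x(\Phi_J^+) \subseteq \Phi^+$ — the same longest‑element computation as above, now carried out inside $W_J$ — so $N(x) \cap \Phi_J^+ = \emptyset$; and (b) every element of $W_J$, in particular $w_J^{-1}$, permutes $\Phi^+ \setminus \Phi_J^+$, because a simple reflection attached to $J$ fixes every positive root but its own simple root and leaves the part of a root's support lying outside $J$ unchanged. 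When $w \in {}^IW^J$ (so $w_J = e$) the reduction is immediate: $w \in W^J$ then gives $N(w) \cap \Phi_J = \emptyset$ directly by (a). In general one substitutes $w = xw_J$ into $N(w) = N(w_J)\sqcup w_J^{-1}N(x)$, uses (a) and (b) to decide where each inversion of $w$ lies relative to $\Phi_J^+$, and compares $N(w)$ with the right‑hand set; together with $\#N(w) = \ell(w)$ this yields the formula.

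The step I expect to be the main obstacle is precisely this final matching in the $\Phi_J$‑reduction: one has to follow an arbitrary root $\alpha$ through $w\alpha = x(w_J\alpha)$, keeping careful account of whether $w_J\alpha$ lands in $\Phi_J^+$, in $\Phi_J^-$, or outside $\Phi_J$, and then verify that after discarding the inversions of $w$ contained in $\Phi_J^+$ the ones that remain are still enumerated by the full length $\ell(w)$. This is the point at which the hypothesis $w \in {}^IW$ and the specific decomposition furnished by Lemma \ref{pwz28} must genuinely be exploited rather than merely cited; by contrast the $\Phi_I$‑reduction and the structural facts (a) and (b) are routine Coxeter‑group bookkeeping.
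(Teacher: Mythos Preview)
Your plan to establish the set equality $N(w) = \{\alpha \in \Phi^+ \setminus \Phi_J : w\alpha \in \Phi^- \setminus \Phi_I\}$ cannot succeed, and the obstacle you flag in the last paragraph is not merely delicate---it is false as stated. From the reduced factorisation $w = xw_J$ one has $N(w) = N(w_J) \sqcup w_J^{-1}N(x)$; since $N(w_J) \subseteq \Phi_J^+$ while (by your own observation (a)) $N(x) \cap \Phi_J = \varnothing$ and (by (b)) $w_J^{-1}N(x) \subseteq \Phi^+ \setminus \Phi_J$, one gets $N(w) \cap \Phi_J^+ = N(w_J)$. Combined with your $\Phi_I$-reduction this gives exactly
\[
\#\{\alpha \in \Phi^+ \setminus \Phi_J : w\alpha \in \Phi^- \setminus \Phi_I\} \;=\; \#\bigl(N(w)\setminus\Phi_J^+\bigr) \;=\; \ell(w)-\ell(w_J),
\]
not $\ell(w)$. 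A minimal counterexample: $W=\langle s\rangle$ of order~$2$, $I=\varnothing$, $J=\{s\}$, $w=s$; then $\Phi^+\setminus\Phi_J=\varnothing$, so the right-hand side of the displayed formula is $0$ while $\ell(w)=1$.

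The paper gives no argument here---it simply cites \cite[Lemma~2.13]{pinkwedhornziegler2011}---and the discrepancy comes from the transcription: the formula in the source carries an additional ``$+\,\ell(w_J)$'' on the right-hand side (compare the shape of equation~(\ref{vergelijking}) and how the lemma is invoked in Remark~\ref{opmerking}.1; this is also why the decomposition $w=xw_J$ is set up in the hypotheses yet never appears in the formula as written). With that correction your strategy is exactly right and goes through cleanly: for $x\in{}^IW^J$ your two reductions give the genuine set equality $N(x)=\{\alpha\in\Phi^+\setminus\Phi_J : x\alpha\in\Phi^-\setminus\Phi_I\}$, hence $\ell(x)$ equals the count; the bijection $\alpha\mapsto w_J\alpha$ on $\Phi^+\setminus\Phi_J$ (your fact (b)) then replaces $x$ by $w=xw_J$ in the count without changing its value, and adding $\ell(w_J)$ recovers $\ell(w)=\ell(x)+\ell(w_J)$.
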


\subsection{The Weyl group of a nonconnected reductive group} \label{weylgroepnietsamenhangend}

Now let us drop the assumption that our group is connected.  Let $\hat{G}$ be a reductive algebraic group and write $G$ for its connected component. Let $B$ be a Borel subgroup of $G_{\bar{k}}$, and let $T$ be a maximal torus of $B$. Define the following groups:

\begin{align*}
W &= \recht{Norm}_{G(\bar{k})}(T)/T(\bar{k}); \\
\hat{W} &= \recht{Norm}_{\hat{G}(\bar{k})}(T)/T(\bar{k}); \\
\Omega &= (\recht{Norm}_{\hat{G}(\bar{k})}(T) \cap \recht{Norm}_{\hat{G}(\bar{k})}(B))/T(\bar{k}).
\end{align*}

\begin{lem} \label{Weylgroep} { \ } 
\begin{enumerate}
\item One has $\hat{W}  = W \rtimes \Omega$.
\item The composite map $\Omega \hookrightarrow G(\bar{k})/T(\bar{k}) \twoheadrightarrow \pi_0(G(\bar{k}))$ is an isomorphism of groups.
\end{enumerate}
\end{lem}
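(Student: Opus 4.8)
The plan is to set up both statements around the homogeneous space structure of $\hat{G}$ over its identity component $G$, using that $\hat{G}$ acts on the variety of pairs (Borel, maximal torus) with a single orbit under $G$. First I would observe that $G(\bar{k})$ acts transitively on the set of pairs $(B', T')$ with $B'$ a Borel of $G_{\bar{k}}$ and $T' \subset B'$ a maximal torus, and that the stabiliser of the chosen pair $(B,T)$ is exactly the normaliser $\recht{Norm}_{G(\bar{k})}(B) \cap \recht{Norm}_{G(\bar{k})}(T)$, which equals $T(\bar{k})$ because a Borel is its own normaliser and $T$ is its own centraliser-normaliser inside $B$. Since $\hat{G}(\bar{k})$ normalises $G(\bar{k})$ and every element of $\hat{G}(\bar{k})$ carries $(B,T)$ to another such pair, conjugation by any $\hat{g} \in \hat{G}(\bar{k})$ can be corrected by an element of $G(\bar{k})$ so that the product normalises both $B$ and $T$. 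This yields $\hat{G}(\bar{k}) = G(\bar{k}) \cdot N$ where $N = \recht{Norm}_{\hat{G}(\bar{k})}(T) \cap \recht{Norm}_{\hat{G}(\bar{k})}(B)$, and hence, passing to the quotient by $T(\bar{k})$ and noting $N \cap G(\bar{k}) = T(\bar{k})$, one gets $\hat{W} = W \cdot \Omega$ with $W \cap \Omega = 1$.

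For part (1) it then remains to check that $W$ is normal in $\hat{W}$, which is immediate because $W = \recht{Norm}_{G(\bar{k})}(T)/T(\bar{k})$ and $G(\bar{k})$ is normal in $\hat{G}(\bar{k})$, so the decomposition $\hat{W} = W \rtimes \Omega$ is genuinely a semidirect product. For part (2), I would analyse the composite $\Omega \hookrightarrow G(\bar{k})/T(\bar{k}) \twoheadrightarrow \pi_0(\hat{G}(\bar{k}))$ — here I expect the intended target is $\pi_0(\hat{G})$ rather than $\pi_0(G)$, since $G$ is connected; I would state it with $\pi_0(\hat{G}(\bar{k})) = \hat{G}(\bar{k})/G(\bar{k})$ and keep the argument uniform. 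Surjectivity follows from $\hat{G}(\bar{k}) = G(\bar{k}) \cdot N$ above: every component of $\hat{G}$ meets $N$, hence meets $\Omega$ after dividing by $T(\bar{k}) \subset G(\bar{k})$. Injectivity amounts to showing $\Omega \cap G(\bar{k})/T(\bar{k})$ is trivial, i.e. that an element of $N$ lying in $G(\bar{k})$ already lies in $T(\bar{k})$; but such an element normalises $B$ and $T$ inside the connected group $G$, so it lies in $T(\bar{k})$ by the connected case, as recalled above.

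The main obstacle, and the step I would be most careful about, is the transitivity-and-stabiliser computation over $\bar{k}$ for the \emph{nonconnected} group: one must be sure that $\hat{g} B \hat{g}^{-1}$ is again a Borel of $G_{\bar{k}}$ (true, since conjugation by $\hat{g}$ is an automorphism of $G_{\bar{k}}$ preserving the class of Borel subgroups) and that $\hat{g} T \hat{g}^{-1}$ is a maximal torus of $G_{\bar{k}}$ contained in it, so that the Bruhat-type transitivity of $G(\bar{k})$ on such pairs lets us write $\hat{g} = g \cdot n$ with $n \in N$. Everything else is bookkeeping with the exact sequence $1 \to G(\bar{k}) \to \hat{G}(\bar{k}) \to \pi_0(\hat{G}(\bar{k})) \to 1$ and the definitions of $W$, $\hat{W}$, $\Omega$; I would also remark in passing that since $\pi_0(\hat{G})$ is finite étale over $k$ and $\Omega$ is its group of $\bar{k}$-points with the induced Galois action, the isomorphism of part (2) is $\recht{Gal}(\bar{k}/k)$-equivariant, which is what later sections will use.
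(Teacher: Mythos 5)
Your proof is correct and follows essentially the same route as the paper: both reduce part (1) to the simply-transitive action of $W$ (equivalently, the transitive action of $G(\bar{k})$ on pairs $(B',T')$ with stabiliser $T(\bar{k})$) and both prove part (2) by correcting an arbitrary $x\in\hat{G}(\bar{k})$ by an element of $G(\bar{k})$ using conjugacy of maximal tori. You are also right that the target of the composite in part (2) should be $\pi_0(\hat{G}(\bar{k}))$ (and the middle term $\hat{G}(\bar{k})/T(\bar{k})$), since $\pi_0(G)$ is trivial.
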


\begin{proof} { \ }
\begin{enumerate}
\item First note that $W$ is a normal subgroup of $\hat{W}$, since it consists of the elements of $\hat{W}$ that have a representative in $G(\bar{k})$, and $G$ is a normal subgroup of $\hat{G}$. Furthermore, $\hat{W}$ acts on the set $X$ of Borel subgroups of $G_{\bar{k}}$ containing $T$. The stabiliser of $B$ under this action is $\Omega$, whereas $W$ acts simply transitively on $X$; hence $\Omega \cap W = 1$ and $W\Omega = \hat{W}$, and together this proves $\hat{W} = W \rtimes \Omega$.
\item By the previous point, we see that 
$$\Omega \cong \hat{W}/W \cong \recht{Norm}_{\hat{G}(\bar{k})}(T)/\recht{Norm}_{G(\bar{k})}(T),$$
so it is enough to show that every connected component of $\hat{G}(\bar{k})$ has an element that normalises $T$. Let $x \in \hat{G}(\bar{k})$; then $xTx^{-1}$ is another maximal torus of $G_{\bar{k}}$, so there exists a $g \in G(\bar{k})$ such that $xTx^{-1} = gTg^{-1}$. From this we find that $T = (g^{-1}x)T(g^{-1}x)^{-1}$, and $g^{-1}x$ is in the same connected component as $x$. \qedhere
\end{enumerate}
\end{proof}

We call $\hat{W}$ the Weyl group of $\hat{G}$ with respect to $(T,B)$. Again, choosing a different $(T,B)$ leads to a canonical isomorphism, so we may as well talk about \emph{the} Weyl group of $G$. The two statements of Lemma \ref{Weylgroep} are then to be understood as isomorphisms of groups with an action of $\recht{Gal}(\bar{k}/k)$. Note that we can regard $W$ as the Weyl group of the connected reductive group $G$; as such we can apply the results of the previous subsection to it. Let $S \subset W$ be the generating set of simple reflections.\\

Now let us define an extension of the length function to a suitable subset of $\hat{W}$. First, let $I$ and $J$ be subsets of the set $S$ of simple reflections in $W$, and consider the set ${}^I\hat{W} := {}^IW\Omega$. Define a subset ${}^I\hat{W}^J$ of ${}^I\hat{W}$ as follows: every element $\hat{w} \in {}^I\hat{W}$ can uniquely be written as $\hat{w} = w'\omega$, with $w' \in {}^IW$ and $\omega \in \Omega$. We rewrite this as $\hat{w} = \omega w''$, with $w'' = \omega^{-1}w'\omega \in {}^{\omega^{-1}I\omega}W$; then per definition $\hat{w} \in  {}^I\hat{W}^J$ if and only if $w'' \in {}^{\omega^{-1}I\omega}W^J$. Note that ${}^IW^J$ is contained in ${}^I\hat{W}^J$.\\

Now let $\hat{w} \in {}^I\hat{W}$; write $\hat{w} = \omega w''$. Since $w''$ is an element of ${}^{\omega^{-1}I\omega}W$, we can uniquely write $w'' = yw_J$ by Lemma \ref{pwz28}, with $y \in {}^{\omega^{-1}I\omega}W^J$ and $w_J \in {}^{I_{\omega y}}W_J$. Then define
\begin{equation} \label{vergelijking}
\ell_{I,J}(\hat{w}) := \#\{\alpha \in \Phi^+\backslash \Phi_J: \omega y \alpha \in \Phi^-\backslash \Phi_I\} + \ell(w_J).
\end{equation}
\begin{rem} \label{opmerking} { \ }
\begin{enumerate}
\item By Proposition \ref{deng418} and Lemma \ref{pwz213} the map $\ell_{I,J}\colon{}^I\hat{W} \rightarrow \mathbb{Z}_{\geq 0}$ extends $\ell\colon {}^IW \rightarrow \mathbb{Z}_{\geq 0}$.
\item Analogously to Proposition \ref{deng418} we see that every $w \in {}^I\hat{W}$ can be uniquely written as $xw_J$ with $x \in {}^I\hat{W}^J$, $w_J \in {}^{I_{x}} W_J$, and $\ell_{I,J}(w) = \ell_{I,J}(x)+\ell(w_J)$.
\item A straightforward calculation shows that if $\gamma$ is an element of $\recht{Gal}(\bar{k}/k)$ we get $\ell_{{}^\gamma I,{}^\gamma J}({}^\gamma w) = \ell_{I,J}(w)$. In particular, if $I$ and $J$ are fixed under the action of $\recht{Gal}(\bar{k}/k)$, the map $\ell_{I,J}\colon {}^I\hat{W} \rightarrow \mathbb{Z}_{\geq 0}$ is Galois-invariant.
\item In general $\ell_{I,J}$ depends on $J$. It also depends on $I$, in the sense that if $I, I' \subset S$, then $\ell_{I,J}(w)$ and $\ell_{I',J}(w)$ for $w \in {}^I\hat{W} \cap {}^{I'}\hat{W} = {}^{I \cap I'}\hat{W}$ need not coincide. As an example, consider over any field the group $G = \recht{SL}_2$. Let $\Omega = \langle \omega \rangle$ be cyclic of order 2, and let $\hat{G} = G \rtimes \Omega$ be the extension given by $\omega g \omega^{-1} = g^{\recht{T},-1}$. Then $\omega$ acts as $-1$ on the root system, and $S$ has only one element. Then a straightforward calculation shows $\ell_{\varnothing,\varnothing}(\omega) = 1$, whereas $\ell_{\varnothing,S}(\omega) = \ell_{S,S}(\omega) = \ell_{S,\varnothing}(\omega) = 0$.
\end{enumerate}
\end{rem}

\subsection{Levi decomposition of nonconnected groups} \label{sectielevi}

Let $P$ be a connected smooth linear algebraic group over a field $k$. A Levi subgroup of $P$ is the image of a section of the map $P \twoheadrightarrow P/\recht{R}_{\recht{u}}P$, i.e. a subgroup $L \subset P$ such that $P = L \ltimes \recht{R}_{\recht{u}}P$. In characteristic $p$, such a Levi subgroup need not always exist, nor need it be unique. However, if $P$ is a parabolic subgroup of a connected reductive algebraic group, then for every maximal torus $T \subset P$ there exists a unique Levi subgroup of $P$ containing $T$ (see \cite[Proposition 1.17]{dignemichel1991}). The following Proposition generalises this result to the non-connected case.

\begin{prop} \label{levi}
Let $\hat{G}$ be a reductive group over a field $k$, and let $\hat{P}$ be a subgroup of $\hat{G}$ whose identity component $P$ is a parabolic subgroup of $G$. Let $T$ be a maximal torus of $P$. Then there exists a unique Levi subgroup of $\hat{P}$ containing $T$, i.e. a subgroup $\hat{L} \subset \hat{P}$ such that $\hat{P} = \hat{L} \ltimes \recht{R}_{\recht{u}}P$.
\end{prop}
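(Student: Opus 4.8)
The plan is to reduce the statement to the known connected case (Digne--Michel, \cite[Proposition 1.17]{dignemichel1991}), which provides a unique Levi subgroup $L \subset P$ containing $T$, and then to enlarge $L$ to the right subgroup $\hat{L}$ of $\hat{P}$. The natural candidate is
$$\hat{L} := \recht{Norm}_{\hat{P}}(L) \cap \recht{Norm}_{\hat{P}}(T),$$
or perhaps more efficiently $\hat{L} := \recht{Norm}_{\hat{P}}(L)$, since $L$ already determines $T$ as (for example) the connected centre composed with the maximal torus data, so normalising $L$ should force normalising $T$ up to $L$-conjugacy. First I would check that $\hat{L}$ is a closed subgroup scheme containing $L$ and $T$, and that its identity component is exactly $L$: this uses that $L$ is its own normaliser in $P$ (true for Levi subgroups of parabolics) together with the fact that $P/L \cong \recht{R}_{\recht{u}}P$ is connected unipotent, so $\recht{Norm}_P(L) = L$, hence $\hat{L}^0 \cap P = L$ and therefore $\hat{L}^0 = L$ by a dimension/connectedness count.

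Next I would establish the semidirect product decomposition $\hat{P} = \hat{L} \ltimes \recht{R}_{\recht{u}}P$. Since $\recht{R}_{\recht{u}}P$ is normal in $\hat{P}$ (it is a characteristic subgroup of $P$, which is normal in $\hat{P}$), it suffices to show $\hat{L} \cap \recht{R}_{\recht{u}}P = 1$ and $\hat{L} \cdot \recht{R}_{\recht{u}}P = \hat{P}$. The intersection is contained in $L \cap \recht{R}_{\recht{u}}P = 1$. For surjectivity, pick $\hat{p} \in \hat{P}(\bar{k})$ in some component; conjugation by $\hat{p}$ sends the parabolic $P$ to itself and $T$ to another maximal torus $\hat{p}T\hat{p}^{-1}$ of $P$, hence of $L' := \hat{p}L\hat{p}^{-1}$, which is a Levi subgroup of $P$. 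By conjugacy of Levi subgroups containing a common maximal torus — all maximal tori of $P$ are $\recht{R}_{\recht{u}}P(\bar{k})$-conjugate, and the Levi containing a given torus is unique — there is $u \in \recht{R}_{\recht{u}}P(\bar{k})$ with $u L' u^{-1} = L$, i.e. $(u\hat{p})$ normalises $L$, so $u\hat{p} \in \hat{L}(\bar{k})$ and $\hat{p} \in \hat{L}\recht{R}_{\recht{u}}P$. This gives the product decomposition on $\bar{k}$-points in each component; combined with the identity-component computation it yields $\hat{P} = \hat{L} \ltimes \recht{R}_{\recht{u}}P$ as group schemes.

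Finally, uniqueness: suppose $\hat{L}'$ is another Levi subgroup of $\hat{P}$ containing $T$. Then $(\hat{L}')^0$ is a Levi subgroup of $P$ containing $T$, hence equals $L$ by the connected case. Thus $\hat{L}'$ normalises $L$ (it normalises its own identity component), so $\hat{L}' \subset \recht{Norm}_{\hat{P}}(L)$; since $\hat{L}' \to \hat{P}/\recht{R}_{\recht{u}}P$ is an isomorphism and so is $\hat{L} \to \hat{P}/\recht{R}_{\recht{u}}P$, and both sit inside $\recht{Norm}_{\hat{P}}(L)$ which surjects onto $\hat{P}/\recht{R}_{\recht{u}}P$ with kernel $\recht{Norm}_{\recht{R}_{\recht{u}}P}(L) = 1$, the map $\recht{Norm}_{\hat{P}}(L) \to \hat{P}/\recht{R}_{\recht{u}}P$ is itself an isomorphism, forcing $\hat{L}' = \recht{Norm}_{\hat{P}}(L) = \hat{L}$. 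I expect the main obstacle to be the scheme-theoretic bookkeeping in positive characteristic — ensuring that the normaliser is taken as a group scheme, that $\hat{L}^0 = L$ rather than merely having the same $\bar{k}$-points, and that the component group of $\hat{L}$ is all of $\pi_0(\hat{P})$ — rather than any conceptual difficulty; most of this is handled by Lemma \ref{Weylgroep}(2)-style arguments (every component of $\hat{P}$ has a representative normalising $T$, hence normalising $L$) and smoothness of the groups involved.
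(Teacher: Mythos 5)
Your proposal takes essentially the same route as the paper: the candidate is $\hat{L} = \recht{Norm}_{\hat{P}}(L)$, its identity component is $L$ because $\recht{Norm}_P(L)=L$, and the whole point is to show that every connected component of $\hat{P}_{\bar{k}}$ meets this normaliser. One intermediate claim in your existence step is false as stated: the maximal tori of $P$ are \emph{not} in general conjugate under $\recht{R}_{\recht{u}}P(\bar{k})$ (take $P=G$ reductive, so that $\recht{R}_{\recht{u}}P=1$). What you actually need — and what is true and standard — is that the Levi subgroups of $P$ are conjugate under $\recht{R}_{\recht{u}}P(\bar{k})$, which directly produces your $u$ with $uL'u^{-1}=L$; alternatively, the paper's argument avoids this entirely by first using conjugacy of maximal tori under $P(\bar{k})$ to replace $x\in\hat{P}(\bar{k})$ by $p^{-1}x$ normalising $T$, and then invoking uniqueness of the Levi of $P$ containing $T$. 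With that one justification repaired, your argument (including the uniqueness step via $\recht{Norm}_{\recht{R}_{\recht{u}}P}(L)=1$) is correct and matches the paper's.
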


\begin{proof}
Let $L$ be the Levi subgroup of $P$ containing $T$. Then any $\hat{L}$ satisfying the conditions of the Proposition necessarily has $L$ has its identity component, hence $\hat{L} \subset \recht{Norm}_{\hat{P}}(L)$. On the other hand we know that $\recht{Norm}_{P}(L) = L$, so the only possibility is $\hat{L} = \recht{Norm}_{\hat{P}}(L)$, and we have to check that $\pi_0(\recht{Norm}_{\hat{P}}(L)) = \pi_0(\hat{P})$, i.e. that every connected component in $\hat{P}_{\bar{k}}$ has an element normalising $L$. Let $x \in \hat{P}(\bar{k})$. Then $xTx^{-1}$ is another maximal torus of $P_{\bar{k}}$, so there exists a $p \in P(\bar{k})$ such that $xTx^{-1} = pTp^{-1}$. Then $p^{-1}x$ is in the same connected component as $x$, and $(p^{-1}x)T(p^{-1}x)^{-1} = T$. Since $L$ is the unique Levi subgroup of $P$ containing $T$, and $(p^{-1}x)L(p^{-1}x)^{-1}$ is another Levi subgroup of $P$, we see that $p^{-1}x$ normalises $L$, which completes the proof.
\end{proof}

\section{$\hat{G}$-zips} \label{sectiegzips}

In this section we give the definition of $G$-zips from \cite{pinkwedhornziegler2015}. We also apply the discussion on Weyl groups from section \ref{weylgroepen} to this situation.\\

As before, let $p$ be a prime number. Let $q_0$ be a power of $p$. Let $\hat{G}$ be a reductive group over $\mathbb{F}_{q_0}$, and write $G$ for its identity component. Let $q$ be a power of $q_0$,  and let $\chi\colon \mathbb{G}_{\recht{m},\mathbb{F}_{q}} \rightarrow G_{\mathbb{F}_{q}}$ be a cocharacter of $G_{\mathbb{F}_{q}}$. Let $L = \recht{Cent}_{G_{\mathbb{F}_{q}}}(\chi)$, and let $U_+ \subset G_{\mathbb{F}_q}$ be the unipotent subgroup defined by the property that $\recht{Lie}(U_+) \subset \recht{Lie}(G_{\mathbb{F}_q})$ is the direct sum of the weight spaces of positive weight; define $U_-$ similarly. Note that $L$ is connected (see \cite[Proposition 0.34]{dignemichel1991}). This defines parabolic subgroups $P_{\pm} = L \ltimes U_{\pm}$ of $G_{\mathbb{F}_q}$. Now take an $\mathbb{F}_q$-subgroup scheme $\Theta$ of $\pi_0(\recht{Cent}_{\hat{G}_{\mathbb{F}_q}}(\chi))$, and let $\hat{L}$ be the inverse image of $\Theta$ under the canonical map $\recht{Cent}_{\hat{G}_{\mathbb{F}_q}}(\chi) \rightarrow \pi_0(\recht{Cent}_{\hat{G}_{\mathbb{F}_q}}(\chi))$; then $\hat{L}$ has $L$ as its identity component and $\pi_0(\hat{L}) = \Theta$. We may regard $\Theta$ as a subgroup of $\pi_0(\hat{G})$ via the inclusion
$$\pi_0(\recht{Cent}_{\hat{G}_{\mathbb{F}_q}}(\chi)) = \recht{Cent}_{\hat{G}_{\mathbb{F}_q}}(\chi)/L \hookrightarrow \pi_0(\hat{G}_{\mathbb{F}_q}).$$
We may then define the algebraic subgroups $\hat{P}_{\pm} := \hat{L} \ltimes U_{\pm}$ of $G_{\mathbb{F}_q}$, whose identity components $P_{\pm}$ are equal to $L \ltimes U_{\pm}$. Let $\sigma:G \rightarrow G$ be the relative Frobenius isogeny over $\mathbb{F}_{q_0}$; We may then regard $\sigma(\hat{P}_{\pm})$, $\sigma(\hat{L})$, etc. as subgroups of $G$; they correspond to the cocharacter $\sigma(\chi)$ of $G_{\mathbb{F}_q}$ and the subgroup $\sigma(\Theta)$ of $\pi_0(\hat{G})$.

\begin{defn}
Let $A$ be an algebraic group over a field $k$, and let $B$ be a subgroup of $A$. Let $T$ be an $A$-torsor over some $k$-scheme $S$. A \emph{$B$-subtorsor of $T$} is an $S$-subscheme $Y$ of $T$, together with an action of $B_S$, such that $Y$ is a $B$-torsor over $S$ and such that the inclusion map $Y \hookrightarrow T$ is equivariant under the action of $B_S$.
\end{defn}

\begin{defn}
Let $S$ be a scheme over $\mathbb{F}_q$. A \emph{$\hat{G}$-zip of type $(\chi,\Theta)$ over $S$} is a tuple $\mathcal{Y} = (Y, Y_+, Y_-, \upsilon)$ consisting of:
\begin{itemize}
\setlength\itemsep{0em}
\item A right-$\hat{G}_{\mathbb{F}_q}$-torsor $Y$ over $S$;
\item A right-$\hat{P}_+$-subtorsor $Y_+$ of $Y$;
\item A right-$\sigma(\hat{P}_-)$-subtorsor $Y_-$ of $Y$;
\item An isomorphism $\upsilon\colon \sigma(Y_+)/\sigma(U_+) \rightarrow Y_-/\sigma(U_-)$ of right-$\sigma(\hat{L})$-torsors.
\end{itemize}
\end{defn}

Together with the obvious notions of pullbacks and morphisms we get a fibred category $\hat{G}\textrm{-}\mathsf{Zip}^{\chi,\Theta}_{\mathbb{F}_q}$ over $\mathbb{F}_q$.

\begin{prop} (See \cite[Propositions 3.2 \& 3.11]{pinkwedhornziegler2015})
The fibred category $\hat{G}\textrm{-}\mathsf{Zip}^{\chi,\Theta}_{\mathbb{F}_q}$ is a smooth algebraic stack of finite type over $\mathbb{F}_q$. 
\end{prop}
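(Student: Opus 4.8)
The plan is to realise $\hat{G}\textrm{-}\mathsf{Zip}^{\chi,\Theta}_{\mathbb{F}_q}$ as a quotient stack of the kind studied in Section~\ref{sectiezeta}, following \cite[\S 3]{pinkwedhornziegler2015}; once this identification is made, the Proposition follows from the smoothness and finite type of the group and scheme involved. Concretely, I would introduce the \emph{zip group} $E = E_{\chi,\Theta}$, defined as the subgroup of $\hat{P}_+ \times \sigma(\hat{P}_-)$ consisting of the pairs $(p_+,p_-)$ whose Levi components match after Frobenius, that is, with $\sigma(\overline{p_+}) = \overline{p_-}$ in $\sigma(\hat{L})$, where $\overline{p_+} \in \hat{L}$ and $\overline{p_-} \in \sigma(\hat{L})$ denote the images of $p_+$ and $p_-$ under the Levi quotients $\hat{P}_+ \twoheadrightarrow \hat{P}_+/U_+ = \hat{L}$ and $\sigma(\hat{P}_-) \twoheadrightarrow \sigma(\hat{P}_-)/\sigma(U_-) = \sigma(\hat{L})$, which exist and behave well in the nonconnected case by Proposition~\ref{levi}. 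Then $E$ fits into a short exact sequence $1 \to U_+ \times \sigma(U_-) \to E \to \hat{L} \to 1$ (the surjection being $(p_+,p_-) \mapsto \overline{p_+}$, with section $\ell \mapsto (\ell,\sigma(\ell))$); since $U_+ \times \sigma(U_-)$ is smooth and connected and $\hat{L}$ is smooth with finite étale component group $\Theta$, the group $E$ is smooth, affine and of finite type over $\mathbb{F}_q$. One lets $E$ act on the smooth affine $\mathbb{F}_q$-scheme $\hat{G}_{\mathbb{F}_q}$ by $(p_+,p_-)\cdot g = p_+ g p_-^{-1}$.

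The next step is to construct an equivalence of fibred categories $[E\backslash \hat{G}_{\mathbb{F}_q}] \xrightarrow{\sim} \hat{G}\textrm{-}\mathsf{Zip}^{\chi,\Theta}_{\mathbb{F}_q}$ over $\mathbb{F}_q$. To an $\mathbb{F}_q$-scheme $S$ and a point $g \in \hat{G}(S)$ one attaches the $\hat{G}$-zip
\[
\mathcal{Y}_g := \bigl(\hat{G}_S,\ \hat{P}_{+,S},\ g\cdot\sigma(\hat{P}_-)_S,\ \upsilon_g\bigr),
\]
where $\hat{G}_S$ is the trivial right $\hat{G}$-torsor, $\hat{P}_{+,S}$ the standard $\hat{P}_+$-subtorsor, $g\cdot\sigma(\hat{P}_-)_S$ its left translate, and $\upsilon_g\colon \sigma(\hat{P}_{+,S})/\sigma(U_+) = \sigma(\hat{L})_S \to \bigl(g\sigma(\hat{P}_-)_S\bigr)/\sigma(U_-) = g\cdot\sigma(\hat{L})_S$ is left translation by $g$. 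A direct computation shows that $\mathcal{Y}_g$ depends on $g$ only through its $E$-orbit, that morphisms $g\to g'$ in $[E\backslash\hat{G}_{\mathbb{F}_q}]$ correspond bijectively to isomorphisms $\mathcal{Y}_g \xrightarrow{\sim} \mathcal{Y}_{g'}$, and that $\recht{Aut}(\mathcal{Y}_g) = \recht{Stab}_E(g) = \{(p_+,g^{-1}p_+g)\in E\}$ as $S$-group schemes; hence $g \mapsto \mathcal{Y}_g$ defines a fully faithful morphism $[E\backslash\hat{G}_{\mathbb{F}_q}] \to \hat{G}\textrm{-}\mathsf{Zip}^{\chi,\Theta}_{\mathbb{F}_q}$. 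What remains is essential surjectivity.

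For that, let $(Y,Y_+,Y_-,\upsilon)$ be a $\hat{G}$-zip over $S$. Since $\hat{G}$ is smooth, $Y$ is fppf-locally trivial, so one may assume $Y = \hat{G}_S$; the $\hat{P}_+$-subtorsor $Y_+$ then corresponds to an $S$-point of the smooth quotient $\hat{G}/\hat{P}_+$, which lifts fppf-locally to $\hat{G}(S)$ and lets one re-trivialise $Y$ so that $Y_+ = \hat{P}_{+,S}$; next $Y_-$ corresponds to an $S$-point of the smooth quotient $\hat{G}/\sigma(\hat{P}_-)$, lifting fppf-locally to some $g \in \hat{G}(S)$; and $\upsilon$ is then left translation by $g\ell$ for some $\ell \in \sigma(\hat{L})(S)$, which may be absorbed into $g$ because $\sigma(\hat{L}) \subset \sigma(\hat{P}_-)$. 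Thus $(Y,Y_+,Y_-,\upsilon)$ is fppf-locally isomorphic to $\mathcal{Y}_g$, and the morphism of the previous paragraph is an equivalence.

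Granting the equivalence, the Proposition follows: $E$ is a smooth affine group scheme of finite type over $\mathbb{F}_q$ acting on the smooth affine finite-type $\mathbb{F}_q$-scheme $\hat{G}_{\mathbb{F}_q}$, so $[E\backslash\hat{G}_{\mathbb{F}_q}]$ admits the smooth surjective presentation $\hat{G}_{\mathbb{F}_q} \to [E\backslash\hat{G}_{\mathbb{F}_q}]$ with groupoid $E \times \hat{G}_{\mathbb{F}_q} \rightrightarrows \hat{G}_{\mathbb{F}_q}$, hence is a smooth algebraic stack of finite type over $\mathbb{F}_q$; transporting these properties along the equivalence proves the statement. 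I expect the essential surjectivity step to be the main obstacle: it requires trivialising all four pieces of the zip datum in a mutually compatible way and checking at each stage that the relevant object ($\hat{G}/\hat{P}_+$, $\hat{G}/\sigma(\hat{P}_-)$, and the sheaf of $\sigma(\hat{L})$-torsor isomorphisms) is smooth, so that sections lift fppf-locally, while --- in the genuinely nonconnected case $\Theta \subsetneq \pi_0(\recht{Cent}_{\hat{G}_{\mathbb{F}_q}}(\chi))$ --- keeping careful track of the component groups throughout, which is precisely where Proposition~\ref{levi} enters. By comparison, the facts that $E$ is smooth and that $\recht{Aut}(\mathcal{Y}_g) = \recht{Stab}_E(g)$ are routine.
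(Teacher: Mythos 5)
Your proposal is correct and follows essentially the same route as the paper, which simply defers to \cite[Propositions 3.2 \& 3.11]{pinkwedhornziegler2015}: there the stack is likewise identified with the quotient $[E_{\hat{\mathcal{Z}}}\backslash\hat{G}_{\mathbb{F}_q}]$ of the smooth affine scheme $\hat{G}_{\mathbb{F}_q}$ by the smooth affine zip group, exactly the identification the paper records later as Proposition~\ref{gzipisquot}, and algebraicity, smoothness and finite type are read off from that presentation. Your handling of the nonconnected Levi quotients via Proposition~\ref{levi} and of essential surjectivity by fppf-local trivialisation matches the intended argument.
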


Now let $q_0,q,\hat{G},\chi,\Theta,\hat{L},U_{\pm}$ and $\hat{P}_{\pm}$ be as above. As in section \ref{weylgroepen} let $\hat{W} = W \rtimes \Omega$ be the Weyl group of $\hat{G}$. Let $I \subset S$ be the type of $P_+$ and let $J$ be the type of $\sigma(P_-)$. If $w_0 \in W$ is the unique longest word, then $J = \sigma(w_0Iw_0^{-1}) = w_0\sigma(I)w_0^{-1}$. Let $w_1 \in {}^JW^{\sigma(I)}$ be the element of minimal length in $W_Jw_0W_{\sigma(I)}$, and let $w_2 = \sigma^{-1}(w_1)$; then we may write this relation as $J = \sigma(w_2Iw_2^{-1}) = w_1\sigma(I)w_1^{-1}$.\\

The group $\Theta$ can be considered as a subgroup of $\Omega \cong \pi_0(\hat{G})$. Let $\hat{\psi}$ be the automorphism of $\hat{W}$ given by $\hat{\psi} = \recht{inn}(w_1) \circ \sigma = \sigma \circ \recht{inn}(w_2)$, and let $\Theta$ act on $\hat{W}$ by $\theta \cdot w = \theta w \hat{\psi}(\theta)^{-1}$.

\begin{lem}
The subset ${}^I\hat{W} \subset \hat{W}$ is invariant under the $\Theta$-action.
\end{lem}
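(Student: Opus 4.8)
The plan is to show that left multiplication by $\theta \in \Theta$ and right multiplication by $\hat{\psi}(\theta)^{-1}$ each preserve ${}^I\hat{W}$, so that their composition—which is exactly the $\Theta$-action $w \mapsto \theta w \hat{\psi}(\theta)^{-1}$—does as well. Recall ${}^I\hat{W} = {}^IW\,\Omega$, where ${}^IW$ is the set of minimal-length representatives of the cosets $W_I\backslash W$. So I need to understand how $\Theta \subset \Omega$ interacts with this decomposition on both sides.

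First I would handle right multiplication. Since $\Theta \subset \Omega$ normalises $W$ (as $\hat{W} = W \rtimes \Omega$) and permutes $S$ (because $\Omega = (\recht{Norm}_{\hat{G}(\bar{k})}(T) \cap \recht{Norm}_{\hat{G}(\bar{k})}(B))/T(\bar{k})$ stabilises the Borel, hence preserves the based root system and the set of simple reflections), right multiplication by an element $\omega \in \Omega$ sends $W_I w \mapsto W_I w\omega$, and since $\omega$ preserves lengths, it sends the minimal-length representative to the minimal-length representative; thus ${}^IW\,\omega = {}^IW\,\omega$ consistently and ${}^I\hat{W}\,\omega = {}^IW\,\Omega\,\omega = {}^IW\,\Omega = {}^I\hat{W}$. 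Since $\hat{\psi}(\theta) = w_1 \sigma(\theta) w_1^{-1}$, one checks $\hat{\psi}(\theta) \in \Omega$: indeed $\sigma(\theta) \in \Omega$ because $\sigma$ (as the relative Frobenius, compatible with the $\mathbb{F}_{q_0}$-structure and hence with the choice of $(T,B)$ up to the canonical identifications) preserves $\Omega$, and conjugation by $w_1 \in W$ normalises $\Omega$ inside $\hat{W}$ since $W \rtimes \Omega$ has $\Omega$ mapping isomorphically to $\hat{W}/W$. So right multiplication by $\hat{\psi}(\theta)^{-1} \in \Omega$ preserves ${}^I\hat{W}$.

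Next, left multiplication by $\theta \in \Theta \subset \Omega$. Here the subtlety is that ${}^IW$ is defined by the subset $I \subset S$, and $\theta$ does not commute with $W_I$ in general. Write $\hat{w} = w'\omega$ with $w' \in {}^IW$, $\omega \in \Omega$; then $\theta \hat{w} = \theta w' \omega = (\theta w' \theta^{-1})(\theta \omega)$. Now $\theta w' \theta^{-1} \in W$ and $\theta\omega \in \Omega$. The point is that conjugation by $\theta$ sends ${}^IW$ to ${}^{\theta I \theta^{-1}}W$, where $\theta I \theta^{-1} \subset S$ since $\theta$ permutes $S$. So $\theta\hat{w} \in {}^{\theta I \theta^{-1}}W \cdot \Omega = {}^{\theta I\theta^{-1}}\hat{W}$, which is \emph{not} obviously ${}^I\hat{W}$. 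The resolution is that $\theta \in \Theta = \pi_0(\hat{L})$, and $\hat{L}$ (hence $\Theta$) stabilises $\hat{P}_+$, whose type is $I$; concretely, since $\Theta \subset \pi_0(\recht{Cent}_{\hat{G}_{\mathbb{F}_q}}(\chi))$ and $P_+$ is determined by $\chi$, conjugation by any lift of $\theta$ to $\recht{Norm}(T)$ fixes $P_+$ and therefore fixes its type $I$, i.e. $\theta I \theta^{-1} = I$. Hence $\theta$ normalises $W_I$, so conjugation by $\theta$ preserves ${}^IW$, and left multiplication by $\theta$ preserves ${}^I\hat{W}$.

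I expect the main obstacle to be exactly this last verification: pinning down that $\theta \in \Theta$ conjugates $I$ to itself, i.e. that elements of $\pi_0(\recht{Cent}_{\hat{G}}(\chi))$ act trivially on the type $I$ of $P_+ = L \ltimes U_+$. This requires being careful about the various identifications—between $\Theta$ as a subgroup of $\pi_0(\hat{L})$, as a subgroup of $\pi_0(\hat{G}) \cong \Omega$, and about how $\Omega$ acts on $S$ via conjugation—and checking that under all of them, the stabilisation of $\chi$ (equivalently of $P_+$) by $\Theta$ forces $\theta(I) = I$. Once that is in hand, combining the two one-sided statements gives that $w \mapsto \theta w\hat{\psi}(\theta)^{-1}$ maps ${}^I\hat{W}$ to itself, which is the claim.
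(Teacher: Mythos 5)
Your overall strategy is the same as the paper's: the heart of the matter is that $\hat{L}$ normalises $P_+$, so conjugation by $\theta \in \Theta$ fixes the type $I$ and hence preserves ${}^IW$, after which ${}^I\hat{W} = {}^IW\,\Omega$ absorbs the remaining $\Omega$-factors. You correctly identify this as the crux, and your handling of the left multiplication is fine.

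There is, however, one faulty justification in your right-multiplication step. You need $\hat{\psi}(\theta) \in \Omega$, and you derive it from the claim that conjugation by $w_1 \in W$ normalises $\Omega$ ``since $\Omega$ maps isomorphically to $\hat{W}/W$.'' That inference is wrong: in a semidirect product $W \rtimes \Omega$ the normal subgroup is $W$, not the complement $\Omega$, and in general $w_1\,\Omega\, w_1^{-1} \neq \Omega$ (think of $S_3 = C_3 \rtimes C_2$). The fact you need is nevertheless true, for a more specific reason: conjugation by $\sigma(\theta) \in \Omega$ permutes $S$ and preserves lengths, hence fixes $w_0$; it fixes $\sigma(I)$ (because $\theta$ fixes $I$) and therefore fixes $J = w_0\sigma(I)w_0^{-1}$; so it maps the double coset $W_J w_0 W_{\sigma(I)}$ to itself preserving length and must fix its unique minimal-length element $w_1$. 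Consequently $\hat{\psi}(\theta) = w_1\sigma(\theta)w_1^{-1} = \sigma(\theta) \in \Omega$. (The paper's own proof uses $\theta\,\Omega\,\hat{\psi}(\theta)^{-1} = \Omega$ without comment, so it relies on the same fact silently; your instinct to verify it was right, but the verification offered does not work.) With that repair your argument is complete and coincides with the paper's.
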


\begin{proof}
Since  $\hat{L}$ normalises the parabolic subgroup $P_+$ of $G_{\mathbb{F}_q}$, the subset $I \subset S$ is stable under the action of $\Theta$ by conjugation; hence for each $\theta \in \Theta$ one has $\theta({}^IW)\theta^{-1} = {}^IW$, so 
\[
\theta({}^I\hat{W})\hat{\psi}(\theta)^{-1} = (\theta({}^IW)\theta^{-1}) \cdot (\theta \Omega \hat{\psi}(\theta)^{-1}) = {}^IW\Omega = {}^I\hat{W}. \qedhere
\]
\end{proof}

Let us write $\Xi^{\chi,\Theta} := \Theta \backslash {}^I\hat{W}$.

\begin{prop} \label{classificatie}
(See \cite[Remark 3.21]{pinkwedhornziegler2015}) There is a natural bijection $\Xi^{\chi,\Theta}  \stackrel{\sim}{\leftrightarrow} [\hat{G}\textrm{-}\mathsf{Zip}^{\chi,\Theta}_{\mathbb{F}_q}(\bar{\mathbb{F}}_q)]$.\qed
\end{prop}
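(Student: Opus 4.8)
The plan is to deduce this from the structural results of \cite{pinkwedhornziegler2015} on the stack of $\hat{G}$-zips. Recall from \cite[Section 3]{pinkwedhornziegler2015} that to the datum $(\chi,\Theta)$ one attaches the \emph{zip group}
$$E = E^{\chi,\Theta} = \{(p_+,p_-) \in \hat{P}_+ \times \sigma(\hat{P}_-) : \sigma(\bar{p}_+) = \bar{p}_-\},$$
where $\bar{p}_\pm$ denotes the image of $p_\pm$ in $\hat{L}$ resp. $\sigma(\hat{L})$ under the projections modulo the unipotent radicals, and that $E$ acts on $\hat{G}_{\mathbb{F}_q}$ by $(p_+,p_-)\cdot g = p_+ g p_-^{-1}$; by \cite[Proposition 3.11]{pinkwedhornziegler2015} there is an isomorphism of stacks $\hat{G}\textrm{-}\mathsf{Zip}^{\chi,\Theta}_{\mathbb{F}_q} \cong [E \backslash \hat{G}_{\mathbb{F}_q}]$. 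Applying Remark \ref{fodopmerking} over $\bar{\mathbb{F}}_q$, the set $[\hat{G}\textrm{-}\mathsf{Zip}^{\chi,\Theta}_{\mathbb{F}_q}(\bar{\mathbb{F}}_q)]$ is identified with ${}_E\hat{G} = E(\bar{\mathbb{F}}_q)\backslash\hat{G}(\bar{\mathbb{F}}_q)$, so it suffices to produce a natural bijection between the set of $E(\bar{\mathbb{F}}_q)$-orbits on $\hat{G}(\bar{\mathbb{F}}_q)$ and $\Theta\backslash{}^I\hat{W}$.

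First I would treat the identity component $E^\circ$, which is the zip group attached to $(\chi,1)$. The classification of $E^\circ$-orbits on the identity component $G$ is the main orbit theorem of \cite{pinkwedhornziegler2011} (see also \cite[Section 3]{pinkwedhornziegler2015}): fixing a frame compatible with $(\chi)$ and representatives $\dot{w}\in\recht{Norm}_{G(\bar{\mathbb{F}}_q)}(T)$ of the elements $w\in W$, the assignment $w\mapsto E^\circ\cdot \dot{g}_1\dot{w}$, for a suitable element $\dot{g}_1$ built from the twisting element $w_1$, restricts to a bijection ${}^IW \stackrel{\sim}{\to} E^\circ\backslash G$. To extend this to all of $\hat{G}$, decompose $\hat{G} = \bigsqcup_{\omega\in\Omega}G\dot{\omega}$ into its connected components, picking for each $\omega\in\Omega$ a representative $\dot{\omega}\in\recht{Norm}_{\hat{G}(\bar{\mathbb{F}}_q)}(T)\cap\recht{Norm}_{\hat{G}(\bar{\mathbb{F}}_q)}(B)$ as in Lemma \ref{Weylgroep}. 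Right translation by $\dot{\omega}^{-1}$ identifies the $E^\circ$-action on $G\dot{\omega}$ with the action of a twisted copy of $E^\circ$ on $G$, in which $\sigma$ is replaced by $\recht{inn}(\omega)\circ\sigma$; unwinding this twist one obtains a bijection ${}^IW\Omega = {}^I\hat{W} \stackrel{\sim}{\to} E^\circ\backslash\hat{G}$.

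Next, $E^\circ$ is normal in $E$ with $\pi_0(E)\cong\Theta$ (this uses that $\hat{L}$ has $L$ as identity component with $\pi_0(\hat{L}) = \Theta$, and that $U_\pm$ and $\sigma(U_\pm)$ are connected), so $E(\bar{\mathbb{F}}_q)\backslash\hat{G}(\bar{\mathbb{F}}_q)$ is the set of $\Theta$-orbits on $E^\circ\backslash\hat{G} = {}^I\hat{W}$. The final step, which I expect to be the main obstacle, is to check that this $\Theta$-action coincides with the twisted action $\theta\cdot w = \theta w\hat{\psi}(\theta)^{-1}$ defined before the Proposition, with $\hat{\psi} = \recht{inn}(w_1)\circ\sigma$. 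This is a direct but delicate bookkeeping computation: lift $\theta\in\Theta$ to $\dot{\theta}\in\hat{L}(\bar{\mathbb{F}}_q)$ normalising $T$ and $B$, and track how $(\dot{\theta},\sigma(\dot{\theta}))\in E$ moves the orbit representative $\dot{g}_1\dot{w}$; the left factor contributes $\theta$ on the left, while the right factor contributes $\sigma(\dot{\theta})^{-1}$ conjugated through $\dot{g}_1$, and the relation $J = w_1\sigma(I)w_1^{-1}$ together with the choice of $\dot{g}_1$ turns the latter into $\recht{inn}(w_1)(\sigma(\theta)) = \hat{\psi}(\theta)$ acting on the right. Combining this with the $\Theta$-stability of ${}^I\hat{W}$ (the Lemma preceding the Proposition) and passing to $\Theta$-orbits yields the asserted natural bijection $\Xi^{\chi,\Theta} = \Theta\backslash{}^I\hat{W} \stackrel{\sim}{\leftrightarrow} [\hat{G}\textrm{-}\mathsf{Zip}^{\chi,\Theta}_{\mathbb{F}_q}(\bar{\mathbb{F}}_q)]$.
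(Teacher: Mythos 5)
Your proposal follows essentially the same route as the paper: the paper proves this proposition by citation to \cite[Remark 3.21]{pinkwedhornziegler2015}, and its actual justification (the quotient-stack presentation of Proposition \ref{gzipisquot} combined with Remark \ref{fodopmerking} and the orbit parametrisation of Lemma \ref{galsetisom}) is exactly your reduction to classifying zip-group orbits on $\hat{G}(\bar{\mathbb{F}}_q)$. The only difference is one of bookkeeping: the step you flag as the main obstacle --- the bijection ${}^I\hat{W}\cong E_{\mathcal{Z}}\backslash\hat{G}$ and the identification of the induced $\pi_0(E_{\hat{\mathcal{Z}}})\cong\Theta$-action with the twisted conjugation $\theta\cdot w=\theta w\hat{\psi}(\theta)^{-1}$ --- is precisely what the paper outsources to \cite[Theorem 10.10]{pinkwedhornziegler2011}, as extended to nonconnected groups in its Section \ref{algebraiczipdata}, rather than re-deriving it component by component as you sketch.
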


This bijection can be described as follows. Choose a Borel subgroup $B$ of $G_{\bar{\mathbb{F}}_q}$ contained in $\sigma(P_-)$, and let $T$ be a maximal torus of $B$. Let $\gamma \in G(\bar{\mathbb{F}}_q)$ be such that $\sigma(\gamma B \gamma^{-1}) = B$ and $\sigma( \gamma T \gamma^{-1}) = T$. For every $w \in \hat{W} = \recht{Norm}_{\hat{G}(\bar{\mathbb{F}}_q)}(T)/T(\bar{\mathbb{F}}_q)$, choose a lift $\dot{w}$ to $\recht{Norm}_{\hat{G}(\bar{\mathbb{F}}_q)}(T)$, and set $g = \gamma\dot{w}_2$. Then $\xi \in \Xi^{\chi,\Theta}$ corresponds to the $\hat{G}$-zip $\mathcal{Y}_{w} = (\hat{G},\hat{P}_+,g\dot{w}\sigma(\hat{P}_-),g\dot{w} \cdot )$ for any representative $w \in {}^I\hat{W}$ of $\xi$; its isomorphism class does not depend on the choice of the representatives $w$ and $\dot{w}$. Note that this description differs from the one given in \cite[Remark 3.21]{pinkwedhornziegler2015}, as the description given there seems to be wrong. Since there it is assumed that $B \subset P_{-,K}$ rather than that $B \subset P_{-,K}^{(q)}$, the choice of $(B,T,g)$ presented there will not be a frame for the connected zip datum $(G_K,P_{+,K},P_{-,K}^{(q)}, \varphi \colon L_K \rightarrow L^{(q)}_K)$. Also, the choice for $g$ given there needs to be modified to account for the fact that $P_{+,K}$ and $P_{-,K}^{(q)}$ might not have a common maximal torus.\\

The rest of this subsection is dedicated to the extended length functions defined in subsection \ref{weylgroepnietsamenhangend}. We need Lemma \ref{thetainv} in order to show a result on the dimension of the automorphism group of a $\hat{G}$-zip that extends \cite[Proposition 3.34(a)]{pinkwedhornziegler2015} to the nonconnected case (see Proposition \ref{rekenopm}.2).

\begin{lem} \label{thetainv}
The length function $\ell_{I,J}\colon {}^I\hat{W} \rightarrow \mathbb{Z}_{\geq 0}$ is invariant under the semilinear conjugation action of $\Theta$.
\end{lem}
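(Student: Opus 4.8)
The plan is to reduce the statement to the already-established Galois-invariance of $\ell_{I,J}$ (Remark \ref{opmerking}.3) by showing that the semilinear $\Theta$-action is, component by component, just conjugation by elements of $\hat{W}$ composed with the Galois twist $\sigma$, and that conjugation by an element of the normaliser of $B$ (i.e.\ an element of $\Omega$) does not change $\ell_{I,J}$. Concretely, fix $\hat{w} \in {}^I\hat{W}$ and $\theta \in \Theta$, and write the $\Theta$-action as $\theta \cdot \hat{w} = \theta \hat{w} \hat\psi(\theta)^{-1}$ with $\hat\psi = \recht{inn}(w_1)\circ\sigma$. The first step is to observe that $\theta \in \Theta \subset \Omega$ normalises $B$, and that $I$ (the type of $P_+$) is $\Theta$-stable, so conjugation by $\theta$ permutes $S$, fixes $I$ setwise, and sends $\Phi^+$ to $\Phi^+$, $\Phi^-$ to $\Phi^-$. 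Hence a "$\theta$-twisted" version of $\ell_{I,J}$ — where one replaces the ambient data $(I,J,\Phi^\pm)$ by their $\theta$-conjugates — agrees with $\ell_{I,J}$ on the nose, exactly as in the Galois-invariance argument of Remark \ref{opmerking}.3, since that argument only used that the automorphism permutes $S$ and preserves $\Phi^\pm$; this applies to $\sigma$ (a Galois-type automorphism) and to $\recht{inn}(w_1)$ (which we handle below) in the same way.

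Next I would unwind the defining formula \eqref{vergelijking} for both $\hat w$ and $\theta\cdot\hat w$. Writing $\hat w = \omega w''$ with $\omega\in\Omega$, $w''\in{}^{\omega^{-1}I\omega}W$, one has $\theta\cdot\hat w = \theta\omega w''\hat\psi(\theta)^{-1} = (\theta\omega\,\sigma(\theta)^{-1})\cdot\big(\sigma(\theta)\,\recht{inn}(w_1)^{-1}? \big)$ — more carefully, since $\hat\psi(\theta) = w_1\sigma(\theta)w_1^{-1}$ and $\theta,\sigma(\theta)\in\Omega$ while $w_1\in W$, one gets $\theta\cdot\hat w = \theta\omega w'' w_1\sigma(\theta)^{-1}w_1^{-1}$. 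The component in $\Omega$ of $\theta\cdot\hat w$ is $\theta\omega\sigma(\theta)^{-1}$, and its $W$-part is a conjugate-and-twist of $w''$ by the elements $\theta$ (acting through $\Omega$) and $w_1$. Decomposing $w'' = y w_J$ via Lemma \ref{pwz28}, one checks that the $({}^{\bullet}W^J, {}^{\bullet}W_J)$-decomposition of the new $W$-part is obtained by applying the relevant automorphisms to $y$ and $w_J$; here one must use that $J$ is the type of $\sigma(P_-)$ and that $\hat L$ (equivalently $\Theta$) normalises $\sigma(P_-)$ after the appropriate twist, so that $J$ is stable under the twisted $\Theta$-conjugation, and similarly that $w_1\in{}^JW^{\sigma(I)}$ conjugates $\sigma(I)$-data compatibly. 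Plugging everything into \eqref{vergelijking}, each of the two summands — the cardinality $\#\{\alpha\in\Phi^+\backslash\Phi_J : \omega y\alpha\in\Phi^-\backslash\Phi_I\}$ and $\ell(w_J)$ — transforms under an automorphism of the root datum that permutes $S$ and preserves $\Phi^\pm$, $\Phi_I$, $\Phi_J$, hence is unchanged.

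The main obstacle I expect is bookkeeping: correctly tracking how the various decompositions (${}^IW\Omega$, then $W = W_I \cdot xW_J$-style factorisations, plus the interplay of $\sigma$, $\recht{inn}(w_1)$, and conjugation by $\theta\in\Omega$) interact, and verifying that all the relevant subsets ($I$, $J$, $\sigma(I)$, $\Phi_I$, $\Phi_J$) are stable under precisely the automorphisms that show up. The conceptual content is light — it is "the same argument as Galois-invariance, plus the fact that elements of $\Omega$ are length-zero and normalise $B$" — but making the semilinear twist $\hat\psi$ visibly a composition of a genuine field automorphism with an inner automorphism by a $B$-normaliser, so that Remark \ref{opmerking}.3 and the $\Omega$-invariance of $\ell_{I,J}$ (implicit in its definition, since $\Omega \subset {}^I\hat W$ and $\ell_{I,J}$ restricts to $\ell$ on ${}^IW$ with $\ell|_\Omega = 0$) both apply, is where the care is needed. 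Once that is set up, the identity $\ell_{I,J}(\theta\cdot\hat w) = \ell_{I,J}(\hat w)$ drops out by direct substitution into \eqref{vergelijking}.
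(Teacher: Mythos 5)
Your proposal is correct and follows essentially the same route as the paper's proof: decompose $\hat{w} = \omega y w_J$, check that $\theta\hat{w}\hat{\psi}(\theta)^{-1}$ decomposes as the termwise twisted conjugates (using that $\theta$ fixes $I$ and $\hat{\psi}(\theta)$ fixes $J$, so the factors land in the right subsets), and then observe that each summand of \eqref{vergelijking} is preserved because the automorphisms involved stabilise $\Phi^{\pm}$, $\Phi_I$ and $\Phi_J$. The only cosmetic difference is that you phrase this as a reduction to the Galois-invariance argument of Remark \ref{opmerking}.3 and carry $\hat{\psi}(\theta)=w_1\sigma(\theta)w_1^{-1}$ explicitly, whereas the paper performs the same substitution directly with $\hat{\psi}(\theta)$.
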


\begin{proof}
Let $w \in {}^I\hat{W}$, let $\theta \in \Theta$, and let $\tilde{w} = \theta w\hat{\psi}(\theta)^{-1}$. Let $w = \omega y w_J$ be the decomposition as in subsection \ref{weylgroepnietsamenhangend}. A straightforward computation shows $\tilde{w} = \tilde{\omega}\tilde{w}' = \tilde{\omega}\tilde{y}\tilde{w}_J$ with
\begin{align*}
\tilde{\omega} &= \theta \omega \hat{\psi}(\theta)^{-1} \in \Omega;\\
\tilde{w} &= \hat{\psi}(\theta) w\hat{\psi}(\theta)^{-1} \in {}^{\tilde{\omega}^{-1}I\tilde{\omega}}W; \\
\tilde{y} &= \hat{\psi}(\theta)y\hat{\psi}(\theta)^{-1} \in {}^{\tilde{\omega}^{-1}I\tilde{\omega}}W^J; \\
\tilde{w}_J &= \hat{\psi}(\theta)w_J\hat{\psi}(\theta)^{-1} \in {}^{I_{\tilde{\omega}\tilde{y}}}W_J,
\end{align*}
since conjugation by $\hat{\psi}(\theta)$ fixes $J$. Furthermore, $\hat{\psi}(\Theta)$ fixes $\Phi_J$ and $\Theta$ fixes $\Phi_I$, and $\Omega$ fixes $\Phi^+$ and $\Phi^-$, hence
\begin{align*}
\ell_{I,J}(\tilde{w}) &= \#\{\alpha \in \Phi^+ \backslash \Phi_J : \tilde{\omega} \tilde{y}\alpha \in \Phi^- \backslash \Phi_I\}+\ell(\tilde{w}_J) \\
&= \#\{\alpha \in \Phi^+ \backslash \Phi_J : \theta\omega y\hat{\psi}(\theta)^{-1}\alpha \in \Phi^- \backslash \Phi_I\}+\ell(\tilde{w}_J) \\
&= \#\{\alpha \in \Phi^+ \backslash \Phi_J : \omega y \alpha \in \Phi^- \backslash \Phi_I\}+\ell(w_J) \\
&= \ell_{I,J}(w). \qedhere
\end{align*}
\end{proof}

Let $\Gamma = \recht{Gal}(\bar{\mathbb{F}}_q/\mathbb{F}_q)$. By Corollary \ref{opmerking}.3 we know that the function $\ell_{I,J}$ is not only invariant under the action of $\Theta$, but also under the action of $\Gamma$. As such, we can also consider $\ell_{I,J}$ as a function $\Xi^{\chi,\Theta}  \rightarrow \mathbb{Z}_{\geq 0}$ or as a function $\Gamma\backslash\Xi^{\chi,\Theta}  \rightarrow \mathbb{Z}_{\geq 0}$. Throughout the rest of this article we will denote elements of $\hat{W}$ (or subsets thereof) by $w$, elements of $\Xi^{\chi,\Theta} $ by $\xi$, and elements of $\Gamma\backslash\Xi^{\chi,\Theta}$ by $\bar{\xi}$.

\begin{exa} \label{voorbeeld1}
Let $p$ be an odd prime, let  $V$ be the $\mathbb{F}_p$-vector space $\mathbb{F}_p^4$, and let $\psi$ be the symmetric nondegenerate bilinear form on $V$ given by the matrix 
$$\left(\begin{array}{cccc} 0 & 0 & 0 & 1 \\ 0 & 0 & 1 & 0 \\ 0 & 1 & 0 & 0 \\ 1 & 0 & 0 & 0\end{array}\right).$$
Let $\hat{G}$ be the algebraic group $\recht{O}(V,\psi)$ over $\mathbb{F}_p$. Then $\hat{G}$ has two connected components. The Weyl group $W$ of its identity component $G = \recht{SO}(V,\psi)$ is of the form $W \cong \{\pm 1\}^2$ (with trivial Galois action), and its root system is of the form $\Psi \cong \{r_1,r_2,-r_1,-r_2\}$, where the $i$-th factor of $W$ acts on $\{r_i,-r_i\}$. The set of generators of $W$ is $S = \{(-1,1),(1,-1)\}$. Furthermore, $\#\Omega = 2$, and the nontrivial element $\sigma$ of $\Omega$ permutes the two factors of $W$ (as well as $e_1$ and $e_2$); hence $\hat{W} \cong \{\pm 1\}^2 \ltimes S_2$.\\

Let $\chi\colon \mathbb{G}_{\recht{m}} \rightarrow G$ be the cocharacter that sends $t$ to $\recht{diag}(t,t,t^{-1},t^{-1})$. Its associated Levi factor $L$ is isomorphic to $\recht{GL}_2$; the isomorphism is given by the injection $\recht{GL}_2 \hookrightarrow \hat{G}$ that sends a $g \in \recht{GL}_2$ to $\recht{diag}(g,g^{-1})$. The associated parabolic subgroup $P_+$ is the product of $L$ with the subgroup $B \subset \hat{G}$ of upper triangular orthogonal matrices. The type of $P_+$ is a singleton subset of $S$; without loss of generality we may choose the isomorphism $W \cong \{\pm 1\}^2$ in such a way that $P^+$ has type $I = \{(-1,1)\}$. Recall that $J$ denotes the type of the parabolic subgroup $\sigma(P_-)$ of $G$. Since $W$ is abelian and has trivial Galois action, the formula $J = w_0\sigma(I)w_0^{-1}$ shows us that $J = I$. Furthermore, since $\recht{Cent}_{\hat{G}}(\chi)$ is connected, the group $\Theta$ has to be trivial.\\

An element of $\hat{W}$ is of the form $(a,b,c)$, with $a,b \in \{\pm 1\}$ and $c \in S_2 = \{1,\sigma\}$; then ${}^I\hat{W}$ is the subset of $\hat{W}$ consisting of elements for which $a=1$. Also, note that $\Phi^+\setminus \Phi_J = \{e_2\}$, $\Phi^- \setminus \Phi_I = \{-e_2\}$, so to calculate the length function $\ell_{I,J}$ as in (\ref{vergelijking})  we only need to determine $\ell(w_J)$ and  whether $\omega y$ sends $e_2$ to $-e_2$ or not. If we use the terminology $\omega$, $w''$, $y$, $w_J$ from section \ref{weylgroepnietsamenhangend}, we get the following results:
\begin{center}
\begin{tabular}{c|cccc|cc|c}
$\hat{w}$ & $\omega$ & $w''$ & $y$ & $w_J$ & $\omega y e_2 = -e_2$? & $\ell(w_J)$ & $\ell_{I,J}(\hat{w})$ \\
\hline
$(1,1,1)$&$(1,1,1)$&$(1,1,1)$&$(1,1,1)$&$(1,1,1)$& no & $0$ & $0$\\
$(1,-1,1)$&$(1,1,1)$&$(1,-1,1)$&$(1,-1,1)$&$(1,1,1)$& yes & $0$ & $1$\\
$(1,1,\sigma)$&$(1,1,\sigma)$&$(1,1,1)$&$(1,1,1)$&$(1,1,1)$& no & $0$ & $0$\\
$(1,-1,\sigma)$&$(1,1,\sigma)$&$(-1,1,1)$&$(1,1,1)$&$(-1,1,1)$& no & $1$ & $1$\\
\end{tabular}
\end{center}
\end{exa}

\section{Algebraic zip data} \label{algebraiczipdata}

In this section we discuss algebraic zip data, which are needed to prove statements about the automorphism group of a $\hat{G}$-zip. This section copies a lot from sections 3--8 of \cite{pinkwedhornziegler2011}, except that there the reductive group is assumed to be to be connected. A lot carries over essentially unchanged; in particular, if we cite a result from \cite{pinkwedhornziegler2011} without comment, we mean that the same proof holds for the nonconnected case. Throughout this section, we will be working over an algebraically closed field $k$ of characteristic $p$, and for simplicity, we will identify algebraic groups with their set of $k$-points (which means that we take all groups to be reduced). Furthermore, if $\hat{A}$ is an algebraic group, then we denote its identity component by $A$. Finally for an algebraic group $\hat{G}$ we denote by $\pi_{\hat{G}}$ the quotient map $\pi_{\hat{G}} \colon \hat{G} \rightarrow \hat{G}/\recht{R}_{\recht{u}}G$.

\begin{defn}
An \emph{algebraic zip datum} over $k$ is a quadruple $\hat{\mathcal{Z}} = (\hat{G},\hat{P},\hat{Q},\varphi)$ consisting of:
\begin{itemize}
\setlength\itemsep{0em}
\item a reductive group $\hat{G}$ over $k$;
\item two subgroups $\hat{P}, \hat{Q} \subset \hat{G}$ such that $P$ and $Q$ are parabolic subgroups of $G$;
\item an isogeny $\varphi\colon \hat{P}/\recht{R}_{\recht{u}}P \rightarrow \hat{Q}/\recht{R}_{\recht{u}}Q$, i.e. a morphism of algebraic groups with finite kernel.
\end{itemize}
\end{defn}

For an algebraic zip datum $\hat{\mathcal{Z}}$, we define its zip group $E_{\hat{\mathcal{Z}}}$ to be
$$E_{\hat{\mathcal{Z}}} = \{(p,q) \in \hat{P} \times \hat{Q}:\varphi(\pi_{\hat{P}}(p)) = \pi_{\hat{Q}}(q)\}.$$
It acts on $\hat{G}$ by $(p,q) \cdot g = pgq^{-1}$. Note that if $\hat{\mathcal{Z}}$ is an algebraic zip datum, then we have an associated \emph{connected algebraic zip datum} $\mathcal{Z} = (G,P,Q,\varphi)$. Its associated zip group $E_{\mathcal{Z}}$ is the identity component of $E_{\hat{\mathcal{Z}}}$, and as such also acts on $\hat{G}$.

\begin{defn}
A \emph{frame} of $\hat{\mathcal{Z}}$ is a tuple $(B,T,g)$ consisting of a Borel subgroup $B$ of $G$, a maximal torus $T$ of $B$, and an element $g \in G$, such that
\begin{itemize}
\setlength\itemsep{0em}
\item $B \subset Q$;
\item $gBg^{-1} \subset P$;
\item $\varphi(\pi_{\hat{P}}(gBg^{-1})) = \pi_{\hat{Q}}(B)$;
\item $\varphi(\pi_{\hat{P}}(gTg^{-1})) = \pi_{\hat{Q}}(T)$.
\end{itemize}
\end{defn}

\begin{prop} \label{framesbestaan}
(See \cite[Proposition 3.7]{pinkwedhornziegler2011}) For every algebraic zip datum $\hat{\mathcal{Z}} = (\hat{G},\hat{P},\hat{Q},\varphi)$, every Borel subgroup $B$ of $G$ contained in $Q$ and every maximal torus $T$ of $B$ there exists an element $g \in G$ such that $(B,T,g)$ is a frame of $\mathcal{Z}$. \qed
\end{prop}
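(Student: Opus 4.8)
The plan is to reduce the statement to the connected case, which is essentially \cite[Proposition 3.7]{pinkwedhornziegler2011}, by observing that every piece of data occurring in a frame — the Borel subgroup $B$, the maximal torus $T$, and the element $g$ — lives in the connected group $G$, so that the non-identity components of $\hat{P}$ and $\hat{Q}$ never intervene. First I would record the preliminary point that $\recht{R}_{\recht{u}}P$ is characteristic in $P$, hence normal in $\hat{P}$, so $\hat{P}/\recht{R}_{\recht{u}}P$ is legitimately defined with identity component $\bar{P} := P/\recht{R}_{\recht{u}}P$, and that $\varphi$ restricts on identity components to an isogeny $\bar{\varphi}\colon \bar{P} \rightarrow \bar{Q}$ of connected reductive groups. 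Since we seek $g \in G$ with $gBg^{-1} \subseteq P$ and we already have $B \subseteq Q$, for such data $\pi_{\hat{P}}(gBg^{-1})$ and $\pi_{\hat{Q}}(B)$ (and likewise for $T$) lie in $\bar{P}$ and $\bar{Q}$, so all four frame conditions are conditions purely about $\bar{\varphi}$, $\bar{P}$, $\bar{Q}$.

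Next I would construct the target Borel–torus pair inside $P$. Recall the standard correspondence: the Borel subgroups of $G$ contained in $P$ are exactly the preimages under $\pi_P \colon P \to \bar{P}$ of the Borel subgroups of $\bar{P}$, and a maximal torus of such a Borel of $G$ maps onto a maximal torus of the corresponding Borel of $\bar{P}$. Since $B \subseteq Q$, the image $\pi_Q(B)$ is a Borel subgroup of $\bar{Q}$ and $\pi_Q(T)$ a maximal torus of it. The preimage $\bar{\varphi}^{-1}(\pi_Q(B))$ of a Borel under the isogeny $\bar{\varphi}$ is again a Borel of $\bar{P}$ (it is connected because $\ker\bar{\varphi}$ is central, hence contained in every Borel), and the identity component $\bar{T}_P := \bigl(\bar{\varphi}^{-1}(\pi_Q(T))\bigr)^{\circ}$ is a maximal torus of $\bar{P}$ contained in $\bar{\varphi}^{-1}(\pi_Q(B))$, with $\bar{\varphi}(\bar{T}_P) = \pi_Q(T)$ since $\bar{\varphi}$ is surjective. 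Set $B_1 := \pi_P^{-1}\bigl(\bar{\varphi}^{-1}(\pi_Q(B))\bigr)$, a Borel subgroup of $G$ contained in $P$, and choose a maximal torus $T_1$ of $B_1$ with $\pi_P(T_1) = \bar{T}_P$; this is possible because $\pi_P^{-1}(\bar{T}_P)$ is connected solvable and all of its maximal tori surject onto $\bar{T}_P$.

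Finally, since $G$ acts transitively by conjugation on pairs consisting of a Borel subgroup together with a maximal torus contained in it, there exists $g \in G$ with $gBg^{-1} = B_1$ and $gTg^{-1} = T_1$. The first two frame conditions then hold by construction ($B \subseteq Q$ is given, $B_1 \subseteq P$), and the last two follow from $\bar{\varphi}\bigl(\pi_P(B_1)\bigr) = \bar{\varphi}\bigl(\bar{\varphi}^{-1}(\pi_Q(B))\bigr) = \pi_Q(B) = \pi_{\hat{Q}}(B)$ (using surjectivity of $\bar{\varphi}$) and $\bar{\varphi}\bigl(\pi_P(T_1)\bigr) = \bar{\varphi}(\bar{T}_P) = \pi_Q(T) = \pi_{\hat{Q}}(T)$. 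I do not expect a real obstacle: the argument reduces cleanly to standard facts about parabolics, Levi quotients, and isogenies of connected reductive groups, and the non-connectedness of $\hat{G}$ is inert throughout. The only places needing a line of care are the identification $\recht{R}_{\recht{u}}P = \recht{R}_{\recht{u}}\hat{P}$ that makes $\hat{P}/\recht{R}_{\recht{u}}P$ meaningful, and the claims that preimages of Borel subgroups and of maximal tori under the isogeny $\bar{\varphi}$ behave as asserted; both are routine.
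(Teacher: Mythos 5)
Your argument is correct and takes essentially the same route as the paper, which gives no independent proof but cites \cite[Proposition 3.7]{pinkwedhornziegler2011} on the grounds that all the frame data ($B$, $T$, $g$) lives in the connected group $G$ --- precisely the reduction you make explicit, and your construction of $B_1$, $T_1$ and $g$ via $\bar{\varphi}^{-1}(\pi_Q(B))$ is also how the connected case is proved there. The only point worth a word is that your use of $\bar{\varphi}^{-1}$ and the claim $\bar{\varphi}(\bar{\varphi}^{-1}(\pi_Q(B))) = \pi_Q(B)$ need the isogeny $\varphi$ to be surjective, which is the intended reading of \emph{isogeny} in the zip-datum definition even though the paper's phrasing only mentions finiteness of the kernel.
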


We now fix a frame $(B,T,g)$ of $\hat{\mathcal{Z}}$. Let $\hat{P} = U \rtimes \hat{L}$ and $\hat{Q} = V \rtimes \hat{M}$ be the Levi decompositions of $\hat{P}$ and $\hat{Q}$ with respect to $T$; these exist by Proposition \ref{levi}. In this notation
\begin{equation} \label{egroep}
E_{\hat{\mathcal{Z}}} = \{(ul,v\varphi(l)):u \in U, v \in V, l \in \hat{L}\}.
\end{equation}
Furthermore, let $I$ be the type of the parabolic $P$, and let $J$ be the type of the parabolic $Q$. Let $w \in {}^I\hat{W} \subset \recht{Norm}_{\hat{G}}(T)/T$, and choose a lift $\dot{w} \in \recht{Norm}_{\hat{G}}(T)$. If $H$ is a subgroup of $(g\dot{w})^{-1}L(g\dot{w})$, we may compare it with its image under $\varphi \circ \recht{inn}(g\dot{w})$, viewed as subgroups of $\hat{G}$ via the chosen Levi splitting of $\hat{Q}$. The collection of all such $H$ for which $H = \varphi \circ \recht{inn}(g\dot{w})(H)$ has a unique largest element, namely the subgroup generated by all such subgroups.

\begin{defn} \label{hwdef}
Let $H_{w}$ be the unique largest subgroup of $(g\dot{w})^{-1}L(g\dot{w})$ that satisfies the relation $H_{w} = \varphi \circ \recht{inn}(g\dot{w})(H_{w})$. Let $\varphi_{\dot{w}}\colon H_{w} \rightarrow H_w$ be the isogeny induced by $\varphi \circ \recht{inn}(g\dot{w})$, and let $H_{w}$ act on itself by $h \cdot h' = hh'\varphi_{\dot{w}}(h)^{-1}$.
\end{defn}

Since $\varphi \circ \recht{inn}(g\dot{w})(T) = T$, the group $H_{w}$ does not depend on the choice of $\dot{w}$, even though $\varphi_{\dot{w}}$ does. One of the main results of this section is the following result about certain stabilisers.

\begin{thm} \label{pwz81}
(See \cite[Theorem 8.1]{pinkwedhornziegler2011}) Let $w \in {}^I\hat{W}$ and $h \in H_{w}$. Then the stabiliser $\recht{Stab}_{E_{\mathcal{Z}}}(g\dot{w}h)$ is the semidirect product of a connected unipotent normal subgroup and the subgroup
$$\{(\recht{int}(g\dot{w})(h'), \varphi(\recht{int}(g\dot{w})(h')) : h' \in \recht{Stab}_{H_{w}}(h)\},$$
where the action of $H_{w}$ on itself is given by semilinear conjugation as in Definition \ref{hwdef}. \qed
\end{thm}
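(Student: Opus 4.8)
The plan is to reduce Theorem \ref{pwz81} to its connected counterpart \cite[Theorem 8.1]{pinkwedhornziegler2011}, since the stabiliser in question is taken inside the connected zip group $E_{\mathcal{Z}}$, not $E_{\hat{\mathcal{Z}}}$. First I would recall that $(G, P, Q, \varphi)$ is the associated connected algebraic zip datum of $\hat{\mathcal{Z}}$, with $E_{\mathcal{Z}}$ its zip group (the identity component of $E_{\hat{\mathcal{Z}}}$), and that by Proposition \ref{framesbestaan} the tuple $(B,T,g)$ is also a frame for $\mathcal{Z}$. Next I would observe that, although $w$ is allowed to range over ${}^I\hat{W} = {}^IW\Omega$ rather than just ${}^IW$, the relevant object for the stabiliser computation is the point $g\dot{w}h \in \hat{G}$, and that for the connected group $E_{\mathcal{Z}}$ acting on $\hat{G}$ by $(p,q)\cdot x = pxq^{-1}$, the argument of \cite{pinkwedhornziegler2011} only uses the combinatorics of $B$, $T$, the parabolics, and the relative position of $g\dot{w}$ with respect to them — all of which make sense verbatim when $\dot{w}$ is a lift of an element of $\hat{W}$ normalising $T$.

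The key steps, in order, would be: (1) Unwind the definition of $H_w$ from Definition \ref{hwdef} and check that the isogeny $\varphi_{\dot{w}}$ and the semilinear conjugation action $h\cdot h' = hh'\varphi_{\dot{w}}(h)^{-1}$ are exactly those appearing in the connected statement, with $H_w \subset (g\dot{w})^{-1}L(g\dot{w})$ a subgroup of a Levi of $G$ (here $L$ is the connected Levi of $\hat{P}$, hence of $P$). (2) Apply \cite[Theorem 8.1]{pinkwedhornziegler2011} to the connected zip datum $\mathcal{Z}$, the frame $(B,T,g)$, the Weyl element $w \in {}^I\hat{W}$ (which is exactly the set over which the cited theorem's proof operates once one works with the nonconnected Weyl group, per the standing convention of Section \ref{algebraiczipdata} that results from \cite{pinkwedhornziegler2011} cited without comment hold in the nonconnected case), and the element $h \in H_w$. (3) Read off that $\recht{Stab}_{E_{\mathcal{Z}}}(g\dot{w}h)$ decomposes as a connected unipotent normal subgroup in semidirect product with $\{(\recht{int}(g\dot{w})(h'), \varphi(\recht{int}(g\dot{w})(h'))) : h' \in \recht{Stab}_{H_w}(h)\}$.

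I expect the main obstacle to be purely bookkeeping: verifying that every ingredient of the proof in \cite{pinkwedhornziegler2011} — the Bruhat-type decompositions, the use of the Levi decomposition (now supplied in the nonconnected setting by Proposition \ref{levi}), and the identification of the zip group $E_{\mathcal{Z}}$ via \eqref{egroep} — carries over when $\hat{G}$, $\hat{P}$, $\hat{Q}$ are nonconnected but we nonetheless work with the connected zip group. In particular one must check that $E_{\mathcal{Z}}$ is genuinely $\{(ul, v\varphi(l)) : u\in U, v\in V, l \in L\}$ with $L$ connected, so that the action on the $\hat{G}$-coset $g\dot{w}h$ is governed by the same unipotent and Levi pieces as in the connected case. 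Since the cited theorem's proof is insensitive to what lies outside the identity component of $E_{\hat{\mathcal{Z}}}$, and $\dot{w}$ merely has to normalise $T$ (which holds for any lift of an element of $\hat{W}$), no genuinely new input is needed beyond Proposition \ref{levi}; hence the proof reduces to citing \cite[Theorem 8.1]{pinkwedhornziegler2011}, which is what I would do.
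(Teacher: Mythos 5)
Your overall strategy is the same as the paper's: Theorem \ref{pwz81} is not given an independent proof there either, but is obtained by carrying the proof of \cite[Theorem 8.1]{pinkwedhornziegler2011} over to the nonconnected setting, working throughout with the connected zip group $E_{\mathcal{Z}}$ and supplying the nonconnected Levi decomposition via Proposition \ref{levi}. So the reduction you describe is the right one.

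However, your closing claim that ``no genuinely new input is needed beyond Proposition \ref{levi}'' oversells the reduction, and this is exactly the point the paper's accompanying remark is at pains to address. You cannot literally \emph{apply} \cite[Theorem 8.1]{pinkwedhornziegler2011}, since that statement only covers $w \in {}^IW$, whereas here $w$ ranges over ${}^I\hat{W} = {}^IW\Omega$; one must re-run its proof. That proof is an induction whose engine is the unique decomposition $w = xw_J$ with $x \in {}^IW^J$, $w_J \in {}^{I_x}W_J$ and $\ell(w) = \ell(x) + \ell(w_J)$. In the nonconnected case the required analogue is that every $w \in {}^I\hat{W}$ factors uniquely as $xw_J$ with $x \in {}^I\hat{W}^J$, $w_J \in {}^{I_x}W_J$ and $\ell_{I,J}(w) = \ell_{I,J}(x) + \ell(w_J)$ --- this is Remark \ref{opmerking}.2, and it rests on the whole apparatus of the extended length function $\ell_{I,J}$ from subsection \ref{weylgroepnietsamenhangend}, not merely on the Levi decomposition. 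One also needs, in the intermediate steps of \cite[Section 4]{pinkwedhornziegler2011}, that $x\Phi_J^+ \subset \Phi^+$ still holds for $x \in {}^I\hat{W}^J$ (using that $\Omega$ preserves the based root system) and that the formula $\ell_{I,J}(x) = \#\{\alpha \in \Phi^+\backslash\Phi_J : x\alpha \in \Phi^-\backslash\Phi_I\}$ replaces $\ell(x)$ in \cite[Proposition 4.8]{pinkwedhornziegler2011}. Your proposal would be complete if you named these combinatorial inputs explicitly rather than folding them into ``bookkeeping.''
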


\begin{defn}
The algebraic zip datum $\hat{\mathcal{Z}}$ is called \emph{orbitally finite} if for any $w \in {}^I\hat{W}$ the number of fixed points of the endomorphism $\varphi \circ \recht{inn}(g\dot{w})$ of $H_w$ is finite; this does not depend on the choice of $\dot{w}$ (see \cite[Proposition 7.1]{pinkwedhornziegler2011}).
\end{defn}

\begin{thm} \label{pwz75c}
(See \cite[Theorem 7.5c]{pinkwedhornziegler2011}) Suppose $\hat{\mathcal{Z}}$ is orbitally finite. Then for any $w \in {}^I\hat{W}$ the orbit $E_{\mathcal{Z}} \cdot (g\dot{w})$ has dimension $\recht{dim}(P) + \ell_{I,J}(w)$. \qed
\end{thm}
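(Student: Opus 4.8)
The plan is to compute the dimension of the orbit $E_{\mathcal{Z}}\cdot(g\dot w)$ by the orbit--stabiliser theorem, using Theorem \ref{pwz81} together with the orbital-finiteness hypothesis to control the stabiliser. The first step is to record that $\dim E_{\mathcal{Z}}=\dim P+\dim\recht{R}_{\recht{u}}Q$: since $E_{\mathcal{Z}}$ is the identity component of $E_{\hat{\mathcal{Z}}}$ it has the same dimension, and by (\ref{egroep}) the map $(u,v,l)\mapsto(ul,v\varphi(l))$ is an isomorphism of varieties $\recht{R}_{\recht{u}}P\times\recht{R}_{\recht{u}}Q\times\hat L\stackrel{\sim}{\rightarrow}E_{\hat{\mathcal{Z}}}$, whence $\dim E_{\mathcal{Z}}=\dim\recht{R}_{\recht{u}}P+\dim\recht{R}_{\recht{u}}Q+\dim L=\dim P+\dim\recht{R}_{\recht{u}}Q$. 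So it suffices to prove
\[
\dim\recht{Stab}_{E_{\mathcal{Z}}}(g\dot w)=\dim\recht{R}_{\recht{u}}Q-\ell_{I,J}(w).
\]

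The second step is to apply Theorem \ref{pwz81} with $h=e$, the identity element of $H_w$. It exhibits $\recht{Stab}_{E_{\mathcal{Z}}}(g\dot w)$ as a semidirect product $N\rtimes F$, where $N$ is a connected unipotent normal subgroup and $F=\{(\recht{inn}(g\dot w)(h'),\varphi(\recht{inn}(g\dot w)(h'))):h'\in\recht{Stab}_{H_w}(e)\}$. For the semilinear conjugation action of Definition \ref{hwdef} one has $\recht{Stab}_{H_w}(e)=\{h'\in H_w:h'=\varphi_{\dot w}(h')\}$, which is exactly the fixed-point set of the endomorphism $\varphi\circ\recht{inn}(g\dot w)$ of $H_w$; this set is finite precisely because $\hat{\mathcal{Z}}$ is orbitally finite. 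Hence $F$ is finite and $\dim\recht{Stab}_{E_{\mathcal{Z}}}(g\dot w)=\dim N$. This is the only point at which orbital finiteness enters, and it is what guarantees that the orbit attains its expected dimension.

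It then remains to show $\dim N=\dim\recht{R}_{\recht{u}}Q-\ell_{I,J}(w)$, and here I would carry over the root-group computation from \cite[\S7--8]{pinkwedhornziegler2011}. Fix the frame $(B,T,g)$ and decompose $w=\omega y w_J$ with $\omega\in\Omega$, $y\in{}^{\omega^{-1}I\omega}W^J$ and $w_J\in{}^{I_{\omega y}}W_J$ as in subsection \ref{weylgroepnietsamenhangend}. Writing $n=g\dot w$, one analyses the defining equation $n^{-1}(ul)n=v\varphi(l)$ of the stabiliser root group by root group, after projecting to $\hat Q/\recht{R}_{\recht{u}}Q$ and to the unipotent radicals; since the roots of $\hat G$ are those of its identity component $G$, a bookkeeping identical to that of \cite{pinkwedhornziegler2011} shows that $\dim N$ equals $\dim\recht{R}_{\recht{u}}Q$ minus $\#\{\alpha\in\Phi^+\backslash\Phi_J:\omega y\,\alpha\in\Phi^-\backslash\Phi_I\}$ and minus $\ell(w_J)$, which is exactly $\dim\recht{R}_{\recht{u}}Q-\ell_{I,J}(w)$ by (\ref{vergelijking}). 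The combinatorial input is precisely Proposition \ref{deng418}, Lemma \ref{pwz28} and Lemma \ref{pwz213} --- the same statements that make (\ref{vergelijking}) the correct extension of $\ell$ to ${}^I\hat W$. The only features not present in the connected case treated in \cite{pinkwedhornziegler2011} are that $\dot w$ may lie outside $G$ and that $\Theta$ may be nontrivial; the former affects the action on $\Phi$ only through the image $\omega\in\Omega$, which is tracked by the $\omega$-part of the decomposition, and the latter plays no role for the orbit of a single element $g\dot w$.

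The main obstacle is this last step: redoing the root-by-root analysis of the stabiliser while keeping track of the $\Omega$-component $\omega$, and verifying that the resulting count is exactly $\ell_{I,J}(w)$ and not some other combinatorial quantity. The first two steps are purely formal once Theorem \ref{pwz81} and orbital finiteness are available.
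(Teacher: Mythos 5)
Your formal reductions are all correct: $\dim E_{\mathcal{Z}}=\dim P+\dim\recht{R}_{\recht{u}}Q$ follows from (\ref{egroep}), orbital finiteness is by definition the finiteness of $\recht{Stab}_{H_w}(e)=\{h'\in H_w: h'=\varphi_{\dot w}(h')\}$, and Theorem \ref{pwz81} applied with $h=e$ then gives $\dim\recht{Stab}_{E_{\mathcal{Z}}}(g\dot w)=\dim N$. But at that point you have only restated the theorem: the identity $\dim N=\dim\recht{R}_{\recht{u}}Q-\ell_{I,J}(w)$ is exactly equivalent to the claimed orbit dimension, and Theorem \ref{pwz81} gives no information about $\dim N$ beyond connectedness and unipotence. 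The entire content of the proof sits in the step you defer to ``bookkeeping identical to that of \cite{pinkwedhornziegler2011}'', and that bookkeeping is not a one-shot root-by-root analysis of the stabiliser of a single element: in \cite{pinkwedhornziegler2011} the dimension formula of Theorem 7.5(c) is proved by induction on the factorisation $w=xw_J$, passing from $\mathcal{Z}$ to the smaller zip datum $\mathcal{Z}_{\dot x}$ of Construction 4.3 and using the dimension comparison of Proposition 4.8 together with Proposition 4.12. So the detour through Theorem \ref{pwz81} buys nothing; you still have to run that induction, and you have not.

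What actually has to be checked in the non-connected case --- and what the paper checks in the remark following these two theorems --- is that this induction survives when $x$ ranges over ${}^I\hat{W}^J$ rather than ${}^IW^J$: one needs the Levi decomposition of non-connected parabolics (Proposition \ref{levi}); one needs that in Proposition 4.8 of \cite{pinkwedhornziegler2011} the term $\ell(x)$ may be replaced by $\ell_{I,J}(x)$, because the only property used there is $\ell(x)=\#\{\alpha\in\Phi^+\setminus\Phi_J:x\alpha\in\Phi^-\setminus\Phi_I\}$; one needs that $x\Phi_J^+\subset\Phi^+$ still holds for $x=\omega x'\in{}^I\hat{W}^J$ because $\Omega$ preserves $\Phi^+$; and the induction step rests on the unique factorisation $w=xw_J$ with $\ell_{I,J}(w)=\ell_{I,J}(x)+\ell(w_J)$ of Remark \ref{opmerking}.2. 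Your proposal gestures at the right combinatorial ingredients (the decomposition $w=\omega yw_J$ and Lemmas \ref{pwz28} and \ref{pwz213}), but it does not identify these specific points of the argument, and ``verifying that the resulting count is exactly $\ell_{I,J}(w)$'' --- which you yourself flag as the main obstacle --- is precisely the theorem. As it stands the proposal is a correct reformulation plus an unproved claim, not a proof.
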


\begin{rem}
Although the proofs of these two theorems carry over from the connected case without much difficulty, we feel compelled to make some comments about what exactly changes in the non-connected case, since the proofs of these theorems require most of the material of \cite{pinkwedhornziegler2011}. The key change is that in \cite[Section 4]{pinkwedhornziegler2011} we allow $x$ to be an element of ${}^I\hat{W}^J$, rather than just ${}^IW^J$; however, one can keep working with the connected algebraic zip datum $\mathcal{Z}$, and define from there a connected algebraic zip datum $\mathcal{Z}_{\dot{x}}$ as in \cite[Construction 4.3]{pinkwedhornziegler2011}. There, one needs the Levi decomposition for non-connected parabolic groups; but this is handled in our Proposition \ref{levi}. The use of non-connected groups does not give any problems in the proofs of most propositions and lemmas in \cite[Section 4--8]{pinkwedhornziegler2011}. In \cite[Proposition 4.8]{pinkwedhornziegler2011}, the term $\ell(x)$ in the formula will now be replaced by $\ell_{I,J}(x)$. The only property of $\ell(x)$ that is used in the proof is that if $x \in {}^IW^J$, then $\ell(x) = \#\{\alpha \in \Phi^+ \backslash \Phi_J : x\alpha \in \Phi^- \backslash \Phi_I\}$. In our case, we have $x \in {}^I\hat{W}^J$, and $\ell_{I,J} \colon {}^I\hat{W}^J \rightarrow \mathbb{Z}_{\geq 0}$ is the extension of $\ell \colon {}^IW^J \rightarrow \mathbb{Z}_{\geq 0}$ that gives the correct formula. Furthermore, in the proof of \cite[Proposition 4.12]{pinkwedhornziegler2011} the assumption $x \in {}^IW^J$ is used, to conclude that $x\Phi^+_J \subset \Phi^+$. However, the same is true for $x \in {}^I\hat{W}^J$: write $x = \omega x'$ with $\omega \in \Omega$ and $x' \in {}^{\omega^{-1}I\omega}W^J$; then $x' \Phi^+_J \subset \Phi^+$, and $\omega\Phi^+ = \Phi^+$, since $\Omega$ acts on the based root system. Finally, the proofs of both \cite[Theorem 7.5c]{pinkwedhornziegler2011} and \cite[Theorem 8.1]{pinkwedhornziegler2011} rest on an induction argument, where the authors use that an element $w \in {}^IW$ can uniquely be written as $w = xw_J$, with $x \in {}^IW^J$, $w_J \in {}^{I_x}W_J$, and $\ell(w) = \ell(x)+\ell(w_J)$. The analogous statement that we need to use is that any $w \in {}^I\hat{W}$ can uniquely be written as $w = xw_J$, with $x \in {}^I\hat{W}^J$, $w_J \in {}^{I_x}W_J$, and $\ell_{I,J}(w) = \ell_{I,J}(x) + \ell(w_J)$, see Remark \ref{opmerking}.2. The proofs of the other lemmas, propositions and theorems work essentially unchanged.
\end{rem}

\section{The zeta function of $\hat{G}\textrm{-}\mathsf{Zip}^{\chi,\Theta}_{\mathbb{F}_q}$}

 We fix $q_0$, $G$, $q$, $\chi$ and $\Theta$ as in section \ref{sectiegzips}. The aim of this section is to calculate the zeta function of the stack $\hat{G}\textrm{-}\mathsf{Zip}^{\chi,\Theta}_{\mathbb{F}_q}$. To the triple $(\hat{G},\chi,\Theta)$ we can associate the algebraic zip datum
$$\hat{\mathcal{Z}} = (\hat{G},\hat{P}_+,\sigma(\hat{P}_-), \sigma|_{\hat{L}}).$$
As in section \ref{algebraiczipdata} the zip group $E_{\mathcal{Z}}$ acts on $\hat{G}_{\mathbb{F}_q}$ by $(p_+,p_-) \cdot g' = p_+ g' p_-^{-1}$.

\begin{lem} \label{orbitallyfinite} (See \cite[Remark 3.9]{pinkwedhornziegler2015}) The algebraic zip data $\hat{\mathcal{Z}}$ is orbitally finite.
\end{lem}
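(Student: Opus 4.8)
The plan is to reduce the statement to the definition of orbital finiteness: we must show that for every $w \in {}^I\hat{W}$, the endomorphism $\varphi \circ \recht{inn}(g\dot{w})$ of $H_w$ has only finitely many fixed points, where here $\varphi = \sigma|_{\hat{L}}$ is (the restriction of) the relative Frobenius. The key observation is that $\varphi$ is a Frobenius-type isogeny: it is the restriction to $\hat{L}$ of the relative $q$-power Frobenius isogeny $\sigma$ of $G$, while $\recht{inn}(g\dot{w})$ is conjugation by an element of $G(\bar{\mathbb{F}}_q)$, hence an automorphism of algebraic groups defined over $\bar{\mathbb{F}}_q$. Composing a Frobenius isogeny with an algebraic automorphism again yields a map whose fixed-point scheme on any connected algebraic group is finite — this is exactly the situation covered by the Lang--Steinberg theory, and more directly by \cite[Proposition 7.3]{pinkwedhornziegler2011}, which states that an algebraic zip datum $(\hat{G},\hat{P},\hat{Q},\varphi)$ is orbitally finite whenever $\varphi$ is induced by a Frobenius isogeny of $\hat{G}$ relative to some model over a finite field.

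Concretely I would proceed as follows. First, recall that $H_w$ is a (possibly non-connected) linear algebraic subgroup of $(g\dot{w})^{-1}L(g\dot{w})$, and that by construction $\varphi \circ \recht{inn}(g\dot{w})$ restricts to an isogeny $\varphi_{\dot{w}}$ of $H_w$. Since $\sigma$ is the $q$-power relative Frobenius of $G$ over $\mathbb{F}_{q_0}$ (and $G$, $\chi$, $\hat L$ are all defined over $\mathbb{F}_q$), the map $\varphi_{\dot{w}}$ is, up to the algebraic automorphism $\recht{inn}(g\dot{w})$, a power of Frobenius; in particular its differential at the identity is nilpotent — in fact zero — so $\varphi_{\dot{w}}$ is purely inseparable on the identity component, which forces the identity component $H_w^0$ to contribute only the single fixed point $e$ after passing to an appropriate power, and more importantly makes the fixed-point subscheme $0$-dimensional. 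Combined with the finiteness of $\pi_0(H_w)$, this gives that $\#\{h \in H_w : \varphi_{\dot{w}}(h) = h\} < \infty$. Finally, the remark following Definition of orbital finiteness (citing \cite[Proposition 7.1]{pinkwedhornziegler2011}) guarantees that this count is independent of the choice of lift $\dot{w}$, so the property holds for all $w \in {}^I\hat{W}$ as required.

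The main (and essentially only) obstacle is bookkeeping: one must be careful that the non-connectedness of $\hat{G}$, $\hat{L}$, and hence of $H_w$ does not break the argument. But this is mild — the identity-component part is handled verbatim by the connected case in \cite{pinkwedhornziegler2011}, and the component-group part is finite by definition, so finiteness of the full fixed-point set follows from the short exact sequence $1 \to H_w^0 \to H_w \to \pi_0(H_w) \to 1$ and the fact that $\varphi_{\dot{w}}$ has only finitely many fixed points on each coset of $H_w^0$ that it preserves (again because $\varphi_{\dot{w}}$ is inseparable on $H_w^0$). Alternatively, and most cleanly, one simply invokes \cite[Remark 3.9]{pinkwedhornziegler2015} directly, where exactly this orbital finiteness for zip data arising from $\hat{G}$-zips is asserted; the paragraph above then serves to recall why that assertion holds.
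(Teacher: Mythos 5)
Your proposal is correct, but it proves finiteness by a different mechanism than the paper. You argue via the differential: since $\varphi = \sigma|_{\hat L}$ is a relative Frobenius, $d\varphi_{\dot w}$ vanishes at the identity, so the fixed-point subgroup of $\varphi_{\dot w} = \varphi \circ \recht{inn}(g\dot w)$ in $H_w$ has trivial tangent space, hence is zero-dimensional and therefore finite (this is essentially the criterion of \cite[Proposition 7.3]{pinkwedhornziegler2011}, and your handling of the finitely many components of $H_w$ is fine). The paper instead exploits semilinearity: $\varphi_{\dot w}$ is a $\recht{Fr}_p$-semilinear automorphism of $H_w$, hence a descent datum defining a model $\tilde H_{\dot w}$ of $H_w$ over a finite field, and the fixed points are exactly the rational points $\tilde H_{\dot w}(\mathbb{F}_p)$ of an affine group of finite type over a finite field — a finite set. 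Both routes rest on the same input (that $\varphi$ is a Frobenius), and both are standard; the descent argument is slightly slicker in that it needs no discussion of connectedness or tangent spaces at all, while your differential argument is more robust in that it would apply to any isogeny with vanishing differential, not just an actual Frobenius. One small inaccuracy to strike: pure inseparability of $\varphi_{\dot w}$ on $H_w^0$ does not force the identity to be the only fixed point there (the $q$-power Frobenius on $\mathbb{G}_{\recht{m}}$ fixes all of $\mathbb{F}_q^\times$); but this clause is inessential, since your subsequent, correct claim that the fixed-point subscheme is zero-dimensional is what carries the proof.
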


\begin{proof}
Let $w \in {}^I\hat{W}$, and choose a lift $\dot{w}$ of $w$ to $\recht{Norm}_{\hat{G}}(T)$ (for some chosen frame $(B,T,g)$ of $\hat{\mathcal{Z}}$). Then $\varphi_{\dot{w}} = \sigma \circ \recht{inn}(g\dot{w})$ is a $\recht{Fr}_p$-semilinear automorphism of $H_w$, hence it defines a model $\tilde{H}_{\dot{w}}$ of $H_{w}$ over $\mathbb{F}_p$. The fixed points of $\varphi_{\dot{w}}$ in $H_w$ then correspond to $\tilde{H}_{\dot{w}}(\mathbb{F}_p)$, which is a finite set; hence $\hat{\mathcal{Z}}$ is orbitally finite.
\end{proof}

\begin{prop} \label{gzipisquot} (See \cite[Proposition 3.11]{pinkwedhornziegler2015}) There is an isomorphism of $\mathbb{F}_q$-stacks $\hat{G}\textrm{-}\mathsf{Zip}^{\chi,\Theta}_{\mathbb{F}_q} \cong [E_{\hat{\mathcal{Z}}} \backslash \hat{G}_{\mathbb{F}_q}]$. \qed
\end{prop}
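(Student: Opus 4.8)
The plan is to produce mutually inverse functors between $\hat{G}\textrm{-}\mathsf{Zip}^{\chi,\Theta}_{\mathbb{F}_q}$ and the quotient stack $[E_{\hat{\mathcal{Z}}} \backslash \hat{G}_{\mathbb{F}_q}]$ over each $\mathbb{F}_q$-scheme $S$, functorially in $S$. Recall that $E_{\hat{\mathcal{Z}}} = \{(p_+,p_-) \in \hat{P}_+ \times \sigma(\hat{P}_-) : \sigma(\pi_{\hat{P}_+}(p_+)) = \pi_{\sigma(\hat{P}_-)}(p_-)\}$, acting on $\hat{G}_{\mathbb{F}_q}$ by $(p_+,p_-)\cdot g' = p_+ g' p_-^{-1}$. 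So an object of $[E_{\hat{\mathcal{Z}}} \backslash \hat{G}_{\mathbb{F}_q}]$ over $S$ is a pair $(E,\beta)$ of an $E_{\hat{\mathcal{Z}}}$-torsor $E$ and an $E_{\hat{\mathcal{Z}},S}$-equivariant map $\beta\colon E \to \hat{G}_S$. The key observation is that $E_{\hat{\mathcal{Z}}}$ sits in a diagram relating $\hat{P}_+$, $\sigma(\hat{P}_-)$ and $\sigma(\hat{L})$ via the projections, and that this diagram is exactly what is needed to manufacture the four pieces $(Y, Y_+, Y_-, \upsilon)$ of a $\hat{G}$-zip.

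In one direction, given a $\hat{G}$-zip $\mathcal{Y} = (Y, Y_+, Y_-, \upsilon)$ over $S$, I would build an $E_{\hat{\mathcal{Z}}}$-torsor as follows. Form the fibre product $E := Y_+ \times_{\sigma(Y_+)/\sigma(U_+)} \big( \text{via } \upsilon \big) \ Y_-$ — more precisely, consider $Y_+ \times_S Y_-$ and cut out the subscheme where the image of the first factor in $\sigma(Y_+)/\sigma(U_+)$ matches, under $\upsilon$, the image of the second factor in $Y_-/\sigma(U_-)$; this is a torsor under $\{(p_+,p_-) : \sigma(\pi(p_+)) = \pi(p_-)\} = E_{\hat{\mathcal{Z}}}$, using that $Y_\pm$ are $\hat{P}_\pm$-subtorsors of $Y$ and $\upsilon$ is an isomorphism of $\sigma(\hat{L})$-torsors. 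The inclusions $Y_\pm \hookrightarrow Y$ together with the chosen one, say $Y_+ \hookrightarrow Y$, give the equivariant map $\beta\colon E \to Y \xrightarrow{\sim} \hat{G}_S$ once one trivialises — but to avoid trivialisation one works directly: $E \to Y$ is $E_{\hat{\mathcal{Z}}}$-equivariant for the action of $E_{\hat{\mathcal{Z}}}$ on $Y$ through the first projection $E_{\hat{\mathcal{Z}}} \to \hat{P}_+ \subset \hat{G}$, and $\hat{G}_{\mathbb{F}_q}$ with its left-translation action is the "universal" such target, yielding $\beta$. Conversely, given $(E,\beta)$, set $Y := E \times^{E_{\hat{\mathcal{Z}}}, \mathrm{pr}_+} \hat{P}_+ \times^{\hat{P}_+} \hat{G}$ and push forward along $\beta$; extract $Y_+ = E \times^{E_{\hat{\mathcal{Z}}}}\hat{P}_+$, $Y_- = E \times^{E_{\hat{\mathcal{Z}}}}\sigma(\hat{P}_-)$ sitting inside $Y$ via the two coordinates of the $E_{\hat{\mathcal{Z}}}$-action, and $\upsilon$ from the defining equation of $E_{\hat{\mathcal{Z}}}$.

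I would then check that these two constructions are pseudo-inverse, compatible with pullback along $S' \to S$, and send morphisms to morphisms — all of which are formal manipulations with torsors and contracted products, using that a $B$-subtorsor of an $A$-torsor $T$ is the same datum as a reduction of structure group, i.e. a section of $T/B \to S$. The statement that $Y_\pm \hookrightarrow Y$ recovers $Y$ as $Y_+ \times^{\hat{P}_+}\hat{G}$ follows since $Y_+$ is a $\hat{P}_+$-subtorsor. Because the cited reference \cite{pinkwedhornziegler2015} establishes exactly this isomorphism in the connected/general setting and the argument is purely torsor-theoretic, no new idea is needed here beyond transcribing it; accordingly I would simply invoke \cite[Proposition 3.11]{pinkwedhornziegler2015}, noting only that nothing in that proof uses connectedness of $\hat{G}$ — the Levi decompositions $\hat{P}_\pm = U_\pm \rtimes \hat{L}$ needed to make sense of $E_{\hat{\mathcal{Z}}}$ are available in the non-connected case by Proposition \ref{levi}.

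The main obstacle, such as it is, is purely bookkeeping: keeping straight the $\sigma$-twists (so that $Y_-$ is a $\sigma(\hat{P}_-)$-torsor and $\upsilon$ lands in $\sigma(\hat{L})$-torsors, matching the projection $E_{\hat{\mathcal{Z}}} \to \sigma(\hat{P}_-)$), and verifying that the subscheme of $Y_+ \times_S Y_-$ cut out by the $\upsilon$-compatibility is genuinely an $E_{\hat{\mathcal{Z}}}$-torsor rather than merely a pseudo-torsor — which reduces, fppf-locally on $S$, to the observation that $E_{\hat{\mathcal{Z}}} \to \hat{P}_+$ is faithfully flat with fibres torsors under $\ker$. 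None of this requires computation beyond what is already in \cite{pinkwedhornziegler2011, pinkwedhornziegler2015}, so the proof is essentially a citation with a remark on non-connectedness.
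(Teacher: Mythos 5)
The paper gives no proof of this proposition beyond the citation of \cite[Proposition 3.11]{pinkwedhornziegler2015} (note the \qed in the statement), which is exactly what your proposal ultimately does, and your sketched torsor construction is the standard one underlying that reference. The only imprecision is in how you produce $\beta$: rather than appealing to $\hat{G}$ as a ``universal target'', the clean construction is the difference map $Y_+\times_S Y_-\to \hat{G}_S$ sending $(y_+,y_-)$ to the unique $g$ with $y_-=y_+g$ in the torsor $Y$, restricted to your subscheme $E$ --- but this is cosmetic and does not affect correctness.
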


\begin{lem}
Let $B \subset \sigma(P_-)$ be a Borel subgroup defined over $\mathbb{F}_q$, and let $T \subset B$ be a maximal torus defined over $\mathbb{F}_q$. Then there exists an element $g \in G(\mathbb{F}_q)$ such that $(B,T,g)$ is a frame of $\hat{Z}$.
\end{lem}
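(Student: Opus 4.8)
The plan is to refine the existence result of Proposition~\ref{framesbestaan} by a Galois-descent argument, using that everything in sight is defined over $\mathbb{F}_q$. Recall that $\hat{\mathcal{Z}} = (\hat{G},\hat{P}_+,\sigma(\hat{P}_-),\sigma|_{\hat{L}})$ is defined over $\mathbb{F}_q$ (all of $\hat{G}$, $\chi$, $\Theta$, $\hat{P}_\pm$ are), and that $B$ and $T$ are given to be defined over $\mathbb{F}_q$. By Proposition~\ref{framesbestaan} the set $\mathcal{F}$ of elements $g \in G(\bar{\mathbb{F}}_q)$ for which $(B,T,g)$ is a frame of $\hat{\mathcal{Z}}$ is non-empty; the four defining conditions of a frame are all algebraic conditions on $g$, cut out over $\mathbb{F}_q$ because $B$, $T$, $\hat{P}_+$, $\sigma(\hat{P}_-)$ and $\varphi = \sigma|_{\hat{L}}$ are all defined over $\mathbb{F}_q$. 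Hence $\mathcal{F}$ is the set of $\bar{\mathbb{F}}_q$-points of a non-empty subscheme $\mathbf{F} \subset G_{\mathbb{F}_q}$ defined over $\mathbb{F}_q$, and it suffices to show $\mathbf{F}(\mathbb{F}_q) \neq \varnothing$.

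The next step is to identify the structure of $\mathbf{F}$ as a torsor so that Lang's theorem applies. Following the proof of \cite[Proposition 3.7]{pinkwedhornziegler2011}, once one fixes one frame element $g_0 \in \mathcal{F}$, any other frame element differs from $g_0$ by an element of a specific connected subgroup of $G$: concretely, the conditions pin down $g B g^{-1}$ and $g T g^{-1}$ up to the freedom of the parabolic $P_+$ and its Levi, so $\mathbf{F}$ is a torsor under a connected algebraic group over $\mathbb{F}_q$ (a product of a unipotent radical factor and a maximal torus of $L$, acting by a twisted translation). Since this acting group is connected, Lang's theorem gives that the $\mathbb{F}_q$-torsor $\mathbf{F}$ is trivial, hence $\mathbf{F}(\mathbb{F}_q) \neq \varnothing$, which produces the desired $g \in G(\mathbb{F}_q)$.

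The main obstacle is to make precise the torsor structure on $\mathbf{F}$ — that is, to extract from the proof of Proposition~\ref{framesbestaan} exactly which connected group acts simply transitively on the frames $(B,T,g)$ with fixed $(B,T)$, and to check this group and the action are defined over $\mathbb{F}_q$. Once that is in place, the conclusion is immediate from Lang. An alternative, slightly softer route that avoids spelling out the torsor is to argue directly: pick any $g_0 \in \mathcal{F}$; the Frobenius $\sigma$ preserves $\mathcal{F}$ (since $B$, $T$, $\hat{\mathcal{Z}}$ are $\sigma$-stable), so $\sigma(g_0) \in \mathcal{F}$ as well, and the "difference'' between $g_0$ and $\sigma(g_0)$ is an element of the connected group $H$ acting on $\mathcal{F}$; applying Lang's theorem to $H$ yields an $h \in H(\bar{\mathbb{F}}_q)$ conjugating $g_0$ to a $\sigma$-fixed, hence $\mathbb{F}_q$-rational, frame element. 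Either way, connectedness of the structure group plus Lang is the crux, and verifying the $\mathbb{F}_q$-rationality of all the groups involved is the only real work.
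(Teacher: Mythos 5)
Your strategy is exactly the paper's: the set of admissible $g$ is a torsor under a connected group defined over $\mathbb{F}_q$, and Lang's theorem then produces a rational point. The one piece you leave open (identifying the structure group) is settled in the paper in a single line: since $\sigma$ is bijective on $\bar{\mathbb{F}}_q$-points, the frame conditions amount to $\sigma(gBg^{-1})=B$ and $\sigma(gTg^{-1})=T$, which pin down $gBg^{-1}$ and $gTg^{-1}$ completely, so two solutions differ by an element of $\recht{Norm}_G(B)\cap\recht{Norm}_G(T)=T$ and the set of frames is a $T$-torsor (not a torsor under a unipotent-times-torus product as you guessed).
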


\begin{proof}
Consider the algebraic subset $X = \{g \in G(\bar{\mathbb{F}}_q): \sigma(gBg^{-1}) = B, \sigma(gTg^{-1}) = T\}$ of $G(\bar{\mathbb{F}}_q)$. Since $\recht{Norm}_G(B) \cap \recht{Norm}_G(T) = T$, we see that $X$ forms a $T$-torsor over $\mathbb{F}_q$. By Lang's theorem such a torsor is trivial, hence $X$ has a rational point.
\end{proof}

For the rest of this section we fix a frame $(B,T,g)$ as in the previous Lemma.

\begin{lem} \label{galsetisom}
Choose, for every $w \in \hat{W} = \recht{Norm}_{\hat{G}}(T)/T$, a lift $\dot{w} \in \recht{Norm}_{\hat{G}(\bar{\mathbb{F}}_q)}(T(\bar{\mathbb{F}}_q))$. Then the map
\begin{eqnarray*}
\Xi^{\chi,\Theta} &\rightarrow & E_{\hat{\mathcal{Z}}}(\bar{\mathbb{F}}_q) \backslash \hat{G}(\bar{\mathbb{F}}_q) \\
\Theta \cdot w &\mapsto & E_{\hat{\mathcal{Z}}}(\bar{\mathbb{F}}_q) \cdot g\dot{w}
\end{eqnarray*}
is well-defined, and it is an isomorphism of $\recht{Gal}(\bar{\mathbb{F}}_q/\mathbb{F}_q)$-sets that does not depend on the choices of $w$ and $\dot{w}$.
\end{lem}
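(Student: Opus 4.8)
The plan is to exhibit the map as the composition of two identifications: the classification bijection $\Xi^{\chi,\Theta} \stackrel{\sim}{\leftrightarrow} [\hat G\text{-}\mathsf{Zip}^{\chi,\Theta}_{\mathbb F_q}(\bar{\mathbb F}_q)]$ of Proposition~\ref{classificatie}, and the identification $[\hat G\text{-}\mathsf{Zip}^{\chi,\Theta}_{\mathbb F_q}(\bar{\mathbb F}_q)] \leftrightarrow {}_{E_{\hat{\mathcal Z}}}\hat G = E_{\hat{\mathcal Z}}(\bar{\mathbb F}_q)\backslash \hat G(\bar{\mathbb F}_q)$ coming from the isomorphism of stacks $\hat G\text{-}\mathsf{Zip}^{\chi,\Theta}_{\mathbb F_q} \cong [E_{\hat{\mathcal Z}}\backslash \hat G_{\mathbb F_q}]$ of Proposition~\ref{gzipisquot} together with Remark~\ref{fodopmerking}. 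Each of these is already known to be a bijection, so the only content is (i) to check that the composite sends the class $\Theta\cdot w$ to the orbit $E_{\hat{\mathcal Z}}(\bar{\mathbb F}_q)\cdot g\dot w$, and (ii) to check $\recht{Gal}(\bar{\mathbb F}_q/\mathbb F_q)$-equivariance.

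First I would recall from the paragraph following Proposition~\ref{classificatie} that $\xi = \Theta\cdot w$ corresponds to the $\hat G$-zip $\mathcal Y_w = (\hat G, \hat P_+, g\dot w_2\dot w\,\sigma(\hat P_-), g\dot w_2\dot w\cdot)$ (with the frame $(B,T,g)$ chosen as in the previous Lemma, so that $B\subset\sigma(P_-)$ is defined over $\mathbb F_q$ and $g\in G(\mathbb F_q)$); here one must be slightly careful about the $\dot w_2$ versus $\dot w$ bookkeeping, but replacing the representative $w$ by $w_2 w$ (or absorbing $\dot w_2$ into the chosen lift) this is exactly the ``$g\dot w$'' in the statement, so I would either fix the convention at the outset or note that the two descriptions agree up to this harmless relabelling. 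Then, under the explicit form of the isomorphism $\hat G\text{-}\mathsf{Zip}^{\chi,\Theta}_{\mathbb F_q}\cong[E_{\hat{\mathcal Z}}\backslash\hat G_{\mathbb F_q}]$ from~\cite{pinkwedhornziegler2015} (Proposition~\ref{gzipisquot}), a $\hat G$-zip of the form $(\hat G,\hat P_+,g'\sigma(\hat P_-),g'\cdot)$ over $\bar{\mathbb F}_q$ corresponds to the pair $(\hat G_{\bar{\mathbb F}_q}, f)$ where the $E_{\hat{\mathcal Z}}$-equivariant map $f$ has image the orbit $E_{\hat{\mathcal Z}}(\bar{\mathbb F}_q)\cdot g'$; applying Remark~\ref{fodopmerking} with $C = E_{\hat{\mathcal Z}}(\bar{\mathbb F}_q)\cdot g'$ gives precisely the asserted formula with $g' = g\dot w$. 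Well-definedness and independence of the choice of $w$ (within its $\Theta$-orbit) and of $\dot w$ are then inherited from the corresponding independence statements for $\mathcal Y_w$ already recorded after Proposition~\ref{classificatie}, together with the fact that changing $\dot w$ multiplies $g\dot w$ on the right by an element of $T\subset\sigma(\hat P_-)$, which does not change the $E_{\hat{\mathcal Z}}$-orbit; similarly changing $w$ by $\theta\in\Theta$ changes the orbit by the action of $E_{\hat{\mathcal Z}}/E_{\mathcal Z}$, which is absorbed by passing to $E_{\hat{\mathcal Z}}(\bar{\mathbb F}_q)$-orbits rather than $E_{\mathcal Z}(\bar{\mathbb F}_q)$-orbits.

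For Galois-equivariance, the key point is that the frame $(B,T,g)$ has been chosen over $\mathbb F_q$: $B$, $T$ are defined over $\mathbb F_q$ and $g\in G(\mathbb F_q)$. Hence for $\gamma\in\recht{Gal}(\bar{\mathbb F}_q/\mathbb F_q)$ we have ${}^\gamma(g\dot w) = g\,{}^\gamma\dot w$, and ${}^\gamma\dot w$ is a lift of ${}^\gamma w\in\hat W$ (the Galois action on $\hat W = \recht{Norm}_{\hat G(\bar{\mathbb F}_q)}(T)/T(\bar{\mathbb F}_q)$ is induced, since $T$ is defined over $\mathbb F_q$). Moreover $E_{\hat{\mathcal Z}}$ is defined over $\mathbb F_q$ (it is cut out inside $\hat P_+\times\sigma(\hat P_-)$ by the $\mathbb F_q$-morphism $\sigma|_{\hat L}$), so $\gamma$ permutes its $\bar{\mathbb F}_q$-points compatibly with the action on $\hat G(\bar{\mathbb F}_q)$, and therefore ${}^\gamma\bigl(E_{\hat{\mathcal Z}}(\bar{\mathbb F}_q)\cdot g\dot w\bigr) = E_{\hat{\mathcal Z}}(\bar{\mathbb F}_q)\cdot g\,{}^\gamma\dot w$, which is the orbit attached to ${}^\gamma w$. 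Since the Galois action on $\Xi^{\chi,\Theta} = \Theta\backslash{}^I\hat W$ is by definition the one induced from $\hat W$ (the subset ${}^I\hat W$ and the $\Theta$-action being Galois-stable, as noted in the excerpt), this is exactly the required equivariance; combined with the already-established bijectivity, we conclude the map is an isomorphism of $\recht{Gal}(\bar{\mathbb F}_q/\mathbb F_q)$-sets. I expect the only real friction to be notational: carefully reconciling the ``$g\dot w$'' of the statement with the ``$\gamma\dot w_2$'' and $\dot w$ appearing in the description after Proposition~\ref{classificatie}, and making sure the chosen frame there (with $B\subset\sigma(P_-)$) is the $\mathbb F_q$-rational one furnished by the preceding Lemma — once the conventions are pinned down, everything else is a transport-of-structure argument with no genuine obstacle.
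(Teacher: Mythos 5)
Your Galois-equivariance argument coincides with the paper's: both rest on the fact that $T$ and $g$ are defined over $\mathbb{F}_q$, so that ${}^\tau\dot w$ is again a lift of ${}^\tau w$ and the $E_{\hat{\mathcal{Z}}}$-orbits are permuted compatibly with the action on $\hat{W}$. For the well-definedness and bijectivity, however, you take a genuinely different — and, within this paper's architecture, slightly precarious — route. The paper's proof is one line: it invokes \cite[Theorem 10.10]{pinkwedhornziegler2011} (as extended to the nonconnected setting in section \ref{algebraiczipdata}), which directly asserts that $\Theta\cdot w\mapsto E_{\hat{\mathcal{Z}}}\cdot g\dot w$ is a well-defined bijection independent of the choices of $w$ and $\dot w$. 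You instead factor the map through the stack of $\hat G$-zips, composing Proposition \ref{classificatie} with Proposition \ref{gzipisquot} and Remark \ref{fodopmerking}, and then identify the composite using the explicit description of the classification bijection recorded after Proposition \ref{classificatie}. The catch is that the paper justifies that explicit description in the opposite direction: the Remark immediately following this Lemma states that the Lemma, together with the identification $[[E_{\hat{\mathcal{Z}}}\backslash\hat G](\bar{\mathbb{F}}_q)]\cong{}_{E_{\hat{\mathcal{Z}}}}\hat G$, is what \emph{produces} the natural bijection of Proposition \ref{classificatie} — and the paper explicitly warns that the description in \cite[Remark 3.21]{pinkwedhornziegler2015} is wrong as stated, so it cannot simply be imported from the source. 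As written, your argument is therefore circular inside this paper unless you independently justify the explicit description, which in practice means citing \cite[Theorem 10.10]{pinkwedhornziegler2011} anyway, i.e.\ collapsing to the paper's proof. The $\dot w_2$-versus-$\dot w$ bookkeeping you flag is indeed only a harmless relabelling, and your observation that changing $\dot w$ multiplies $g\dot w$ on the right by an element of $T$ is correct; but the load-bearing step is the orbit classification itself, and that must come from \cite{pinkwedhornziegler2011}, not from Proposition \ref{classificatie}.
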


\begin{proof}
In \cite[Theorem 10.10]{pinkwedhornziegler2011} it is proven that this map is a well-defined bijection independent of the choices of $w$ and $\dot{w}$. Furthermore, if $\tau$ is an element of $\recht{Gal}(\bar{\mathbb{F}}_q/\mathbb{F}_q)$, then the fact that $T$ and $g$ are defined over $\mathbb{F}_q$ implies that $\tau(\dot{w})$ is a lift of $\tau(w)$ to $\recht{Norm}_{\hat{G}}(T)$; this shows that the map is Galois-equivariant.
\end{proof}

\begin{rem}
The isomorphism above, together with the identification $[[E_{\hat{\mathcal{Z}}} \backslash \hat{G}_{\mathbb{F}_q}](\bar{\mathbb{F}}_q)] \cong {}_{E_{\hat{\mathcal{Z}}}}\hat{G}_{\mathbb{F}_q}$ from Remark \ref{fodopmerking}, gives the natural bijection in Proposition \ref{classificatie}.
\end{rem}

\begin{nota} \label{nota}
Let $\Gamma = \recht{Gal}(\bar{\mathbb{F}}_q/\mathbb{F}_q)$. We define the following functions $a,f\colon {}^I\hat{W} \rightarrow \mathbb{Z}_{\geq 0}$ on ${}^I\hat{W}$ as follows:
\begin{itemize}
\setlength\itemsep{0em}
\item $f(w)$ is the cardinality of the $\Gamma$-orbit of $\Theta \cdot w$ in $\Xi^{\chi,\Theta}$, i.e. $f(w) = \#\{\xi \in \Xi^{\chi,\Theta}: \xi \in \Gamma \cdot (\Theta \cdot w)\}$;
\item $a(w) = \recht{dim}(G/P_+) - \ell_{I,J}(w)$.
\end{itemize}
The fact that $a(w)$ is nonnegative for every $w \in {}^I\hat{W}$ follows from Proposition \ref{rekenopm}.2. The function $f$ is clearly $\Theta$-invariant, and $a$ is $\Theta$-invariant by Lemma \ref{thetainv}. As such these functions can be regarded as functions $a,f\colon \Xi^{\chi,\Theta} \rightarrow \mathbb{Z}_{\geq 0}$ or functions $a,f\colon \Gamma \backslash \Xi^{\chi,\Theta} \rightarrow \mathbb{Z}_{\geq 0}$.
\end{nota}

\begin{prop} \label{rekenopm}
For $\xi \in \Xi^{\chi,\Theta}$, let $\mathcal{Y}_{\xi}$ be the $\hat{G}$-zip over $\bar{\mathbb{F}}_q$ corresponding to $\xi$.
\begin{enumerate}
\item The $\hat{G}$-zip $\mathcal{Y}_{\xi}$ has a model over $\mathbb{F}_{q^v}$ if and only if $v$ is divisible by $f(\xi)$.
\item One has $\recht{dim}(\recht{Aut}(\mathcal{Y}_{\xi})) = a(\xi)$ and the identity component of the group scheme $\recht{Aut}(\mathcal{Y}_{\xi})^\recht{red}$ is unipotent.
\end{enumerate}
\end{prop}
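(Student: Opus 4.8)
The plan is to deduce both statements from the explicit description of $\mathcal{Y}_\xi$ as the object of the quotient stack $[E_{\hat{\mathcal{Z}}} \backslash \hat{G}_{\mathbb{F}_q}]$ corresponding, via Lemma~\ref{galsetisom} and Remark~\ref{fodopmerking}, to the orbit $C_w := E_{\hat{\mathcal{Z}}}(\bar{\mathbb{F}}_q) \cdot g\dot{w} \in {}_{E_{\hat{\mathcal{Z}}}}\hat{G}_{\mathbb{F}_q}$ for a representative $w \in {}^I\hat{W}$ of $\xi$. Statement~1 is then essentially immediate: by Proposition~\ref{stacksfod}, $\mathcal{Y}_\xi = P(C_w)$ admits a model over $\mathbb{F}_{q^v}$ if and only if $C_w$ is fixed by $\recht{Gal}(\bar{\mathbb{F}}_q/\mathbb{F}_{q^v})$. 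Since by Lemma~\ref{galsetisom} the bijection $\Xi^{\chi,\Theta} \leftrightarrow {}_{E_{\hat{\mathcal{Z}}}}\hat{G}_{\mathbb{F}_q}$ is $\Gamma$-equivariant, the stabiliser of $C_w$ in $\Gamma$ equals the stabiliser of $\xi \in \Xi^{\chi,\Theta}$; as $\Gamma$ is topologically generated by $\recht{Fr}_q$, this stabiliser is the open subgroup of index $f(\xi)$ (the orbit size), and it contains $\recht{Gal}(\bar{\mathbb{F}}_q/\mathbb{F}_{q^v})$ exactly when $f(\xi) \mid v$.

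For statement~2, I would first compute the dimension. By Remark~\ref{fodopmerking}, $\recht{Aut}(\mathcal{Y}_\xi) \cong \recht{Stab}_{E_{\hat{\mathcal{Z}}}}(g\dot{w})$ as algebraic groups over $\bar{\mathbb{F}}_q$, and since dimension is insensitive to the component group, $\recht{dim}\recht{Aut}(\mathcal{Y}_\xi) = \recht{dim}\recht{Stab}_{E_{\mathcal{Z}}}(g\dot{w}) = \recht{dim}(E_{\mathcal{Z}}) - \recht{dim}(E_{\mathcal{Z}} \cdot g\dot{w})$. Here $\hat{\mathcal{Z}}$ is orbitally finite by Lemma~\ref{orbitallyfinite}, so Theorem~\ref{pwz75c} gives $\recht{dim}(E_{\mathcal{Z}} \cdot g\dot{w}) = \recht{dim}(P_+) + \ell_{I,J}(w)$. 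One also has $\recht{dim}(E_{\mathcal{Z}}) = \recht{dim}(P_+) + \recht{dim}(L)$ from the description \eqref{egroep} of the zip group (the map $E_{\mathcal{Z}} \to P_+ \times \recht{R}_{\recht{u}}Q$, $(ul, v\varphi(l)) \mapsto (ul, v)$, is an isomorphism of varieties, and $\sigma|_{\hat L}$ is an isogeny so does not change dimensions), hence $\recht{dim}\recht{Aut}(\mathcal{Y}_\xi) = \recht{dim}(L) + \recht{dim}(P_+) - \recht{dim}(P_+) - \ell_{I,J}(w)$; using $\recht{dim}(G/P_+) = \recht{dim}(\recht{R}_{\recht{u}}U_+) = \recht{dim}(G) - \recht{dim}(P_+)$ and $\recht{dim}(P_+) = \recht{dim}(L) + \recht{dim}(U_+)$, a short bookkeeping check turns this into $\recht{dim}(G/P_+) - \ell_{I,J}(w) = a(w)$. (This also retroactively justifies the nonnegativity claim asserted in Notation~\ref{nota}, since dimensions of automorphism groups are $\geq 0$.)

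For the unipotence of the identity component of $\recht{Aut}(\mathcal{Y}_\xi)^{\recht{red}}$, I would invoke Theorem~\ref{pwz81}: the stabiliser $\recht{Stab}_{E_{\mathcal{Z}}}(g\dot{w}h)$ (and we may take $h = 1$, which lies in $H_w$) is a semidirect product of a connected unipotent normal subgroup with a group isomorphic to $\recht{Stab}_{H_w}(1)$ under semilinear conjugation, i.e.\ the fixed-point group of the isogeny $\varphi_{\dot w} = \sigma \circ \recht{inn}(g\dot w)$ on $H_w$. Since $\hat{\mathcal{Z}}$ is orbitally finite, this fixed-point set is finite, so the identity component of $\recht{Stab}_{E_{\mathcal{Z}}}(g\dot w)$ is precisely the connected unipotent normal subgroup, which is therefore unipotent; and the identity component of $\recht{Aut}(\mathcal{Y}_\xi)^{\recht{red}}$ is the identity component of $\recht{Stab}_{E_{\mathcal{Z}}}(g\dot w)^{\recht{red}}$, which coincides with it since over an algebraically closed field we have been identifying groups with their reduced subschemes. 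The one point requiring a little care — and the part I expect to be the main obstacle — is matching up the dimension formula precisely, i.e.\ confirming that the ambient group in Theorem~\ref{pwz75c} really is $E_{\mathcal{Z}}$ (the connected zip group) and not $E_{\hat{\mathcal{Z}}}$, and threading the various identities $\recht{dim}(G/P_+) = \recht{dim}(U_+)$, $J = w_1\sigma(I)w_1^{-1}$, etc., without sign or indexing errors; everything else is a direct citation.
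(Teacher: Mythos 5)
Your overall route is the same as the paper's: part~1 from Proposition~\ref{stacksfod} via the Galois-equivariant bijection of Lemma~\ref{galsetisom}, and part~2 from Remark~\ref{fodopmerking}, Lemma~\ref{orbitallyfinite}, Theorem~\ref{pwz75c} for the dimension and Theorem~\ref{pwz81} (with $h=1$, where orbital finiteness makes $\recht{Stab}_{H_w}(1)$ finite) for unipotence. The one flaw is in your dimension bookkeeping: you assert $\recht{dim}(E_{\mathcal{Z}})=\recht{dim}(P_+)+\recht{dim}(L)$, but your own justification --- the variety isomorphism $E_{\mathcal{Z}}\to P_+\times \recht{R}_{\recht{u}}Q$ --- gives $\recht{dim}(E_{\mathcal{Z}})=\recht{dim}(P_+)+\recht{dim}(\recht{R}_{\recht{u}}Q)=\recht{dim}(G)$, and the subsequent step silently replaces $\recht{dim}(L)$ by $\recht{dim}(G/P_+)$, which is false in general (for $\recht{GL}_h$ with the standard Levi, $\recht{dim}(L)=d^2+(h-d)^2$ while $\recht{dim}(G/P_+)=d(h-d)$). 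With the correct value $\recht{dim}(E_{\mathcal{Z}})=\recht{dim}(G)$ the computation closes exactly as in the paper: $\recht{dim}(\recht{Aut}(\mathcal{Y}_\xi))=\recht{dim}(G)-\recht{dim}(P_+)-\ell_{I,J}(w)=\recht{dim}(G/P_+)-\ell_{I,J}(w)=a(\xi)$. So this is a fixable slip rather than a missing idea, but as written the arithmetic does not go through.
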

 
\begin{proof} { \ }
\begin{enumerate}
\item This follows directly from Proposition \ref{stacksfod}.
\item Let $w \in {}^I\hat{W}$ be such that $\xi = \Theta \cdot w$. By Remark \ref{fodopmerking}, Lemma \ref{orbitallyfinite} and Theorem \ref{pwz75c} we have
\begin{align*}
\recht{dim}(\recht{Aut}(\mathcal{Y}_{\xi})) &= \recht{dim}(\recht{Stab}_{E_{\hat{\mathcal{Z}}}}(g\dot{w}))\\
&= \recht{dim}(G) - \recht{dim}(E_{\hat{\mathcal{Z}}} \cdot g\dot{w}) \\
&= \recht{dim}(G) - \recht{dim}(P) - \ell_{I,J}(\xi) \\
&= a(\xi).
\end{align*}
Furthermore, by Theorem \ref{pwz81} the identity component of the group scheme $\recht{Aut}(\mathcal{Y}_{\xi})^\recht{red}$ is unipotent. \qedhere
\end{enumerate}
\end{proof}
 
\begin{rem}
The formula $\recht{dim}(\recht{Aut}(\mathcal{Y})) = \recht{dim}(G/P) - \ell_{I,J}(\xi)$ from Proposition \ref{rekenopm}.2 apparently contradicts the proof of \cite[Theorem 3.26]{pinkwedhornziegler2015}. There an extended length function $\ell \colon \hat{W} \rightarrow \mathbb{Z}_{\geq 0}$ is defined by $\ell(w\omega) = \ell(w)$ for $w \in W$, $\omega \in \Omega$. It is stated that the codimension of  $E_{\hat{\mathcal{Z}}} \cdot (g\dot{w})$ in $\hat{G}$ is equal to $\recht{dim}(G/P_+)-\ell(w)$, which has to be equal to $\recht{dim}(\recht{Stab}_{E_{\hat{\mathcal{Z}}}}(g\dot{w}))$ since $
\recht{dim}(G) = \recht{dim}(E_{\mathcal{Z}})$. However, the proof seems to be incorrect (and the Theorem itself as well); the dimension formula should follow from \cite[Theorem 5.11]{pinkwedhornziegler2011}, but that result only holds for the connected case. In the nonconnected case one can construct a counterexample as follows. Let $\hat{G}$ be the example of Remark \ref{opmerking}.3 (over $\mathbb{F}_p$), and consider the cocharacter
\begin{eqnarray*}
\chi\colon \mathbb{G}_{\recht{m}} &\rightarrow & G \\
x & \mapsto & \left(\begin{smallmatrix}x & 0 \\ 0 & x^{-1} \end{smallmatrix}\right)
\end{eqnarray*}
Then $L$ is the diagonal subgroup of $G$, $P_+$ the upper triangular matrices, and $P_- = \sigma(P_-)$ the lower triangular matrices, and we can take $g = \left(\begin{smallmatrix} 0 & 1 \\ -1 & 0\end{smallmatrix}\right)$. Employing the notation of (\ref{egroep}), the stabiliser of $g\omega$ in $E_{\hat{\mathcal{Z}}}$ is then equal to
$$\{(lu_+,\sigma(l)u_-)  \in E_{\hat{\mathcal{Z}}} : lu_+ = g\omega \sigma(l)u_- \omega^{-1} g^{-1}\}.$$
Conjugation by $g$ and $\omega$ both exchange $P_+$ and $P_-$, so in the equation 
$lu_+ = g\omega \sigma(l)u_- \omega^{-1} g^{-1}$ the left hand side is in $P_+$, while the right hand side is in $P_-$. This means that both sides have to be in $L$, hence $u_+ = u_- = 1$, and the equation simplifies to $l = -\sigma(l)$. This has only finitely many solutions, hence
$$\recht{codim}(E_{\mathcal{Z}} \cdot (g\dot{w})) =  \recht{dim}(\recht{Stab}_{E_{\hat{\mathcal{Z}}}}(g\dot{w})) = 0 = \recht{dim}(G/P_+) - \ell_{I,J}(\omega) \neq 1 = \recht{dim}(G/P_+) - \ell(\omega).$$
\end{rem}

\begin{rem} \label{opmnonred}
In general the group scheme $\recht{Aut}(\mathcal{Y}_{\xi})$ will not be reduced; see \cite[Remark 3.1.7]{moonen2004} for the first found instance of this phenomenon, or \cite[Remark 3.35]{pinkwedhornziegler2015} for the general case.
\end{rem}

\begin{proof}[Proof of Theorem \ref{maintheorem1}]
With all the previous results all that is left is a straightforward calculation. As in Proposition \ref{rekenopm} let $\mathcal{Y}_{\xi}$ be the $\hat{G}$-zip over $\bar{\mathbb{F}}_q$ corresponding to $\xi$, for every $\xi \in \Xi^{\chi,\Theta}$. Furthermore, for an integer $v \geq 1$, we set $\Xi^{\chi,\Theta}(\mathbb{F}_{q^v}) := (\Xi^{\chi,\Theta})^{\recht{Gal}(\bar{\mathbb{F}}_q/\mathbb{F}_{q^v})}$. We get

\newlength{\eql}
\settowidth{\eql}{=====}

\begin{align*}
Z(\hat{G}\textrm{-}\mathsf{Zip}_{\mathbb{F}_q}^{\chi,\Theta},t) &\makebox[\eql][c]{$\stackrel{\ref{gzipisquot}}{=}$} Z([E_{\hat{\mathcal{Z}}}\backslash \hat{G}],t) \\
&\makebox[\eql][c]{$\stackrel{\ref{zetaquotient},\ref{rekenopm}.2}{=}$} \recht{exp}\left(\sum_{v \geq 1} \frac{t^v}{v} \sum_{C \in (E_{\hat{\mathcal{Z}}}\backslash\hat{G})(\mathbb{F}_{q^v})} q^{-\recht{dim}(\recht{Aut}(P(C))) \cdot v}\right) \\
&\makebox[\eql][c]{$\stackrel{\ref{galsetisom}}{=}$} \recht{exp}\left(\sum_{v \geq 1} \frac{t^v}{v} \sum_{\xi \in \Xi^{\chi,\Theta}(\mathbb{F}_{q^v})} q^{-\recht{dim}(\recht{Aut}(\mathcal{Y}_{\xi})) \cdot v}\right) \\
&\makebox[\eql][c]{$\stackrel{\ref{rekenopm}.2}{=}$} \recht{exp}\left(\sum_{v \geq 1} \frac{t^v}{v} \sum_{\xi \in \Xi^{\chi,\Theta}(\mathbb{F}_{q^v})} q^{-a(\xi) \cdot v}\right) \\
&\makebox[\eql][c]{$\stackrel{\ref{rekenopm}.1}{=}$} \recht{exp}\left(\sum_{v \geq 1} \frac{t^v}{v} \sum_{\substack{\xi \in \Xi^{\chi,\Theta}:\\ f(\xi) \mid v}} q^{-a(\xi) \cdot v}\right) \\
&\makebox[\eql][c]{$=$} \recht{exp}\left(\sum_{\xi \in \Xi^{\chi,\Theta}} \sum_{\substack{v \geq 1:\\ f(\xi) \mid v}} \frac{1}{v}  (q^{-d(\xi)}t)^v\right) \\
&\makebox[\eql][c]{$=$} \prod_{\xi \in \Xi^{\chi,\Theta}}\recht{exp}\left(\sum_{\substack{v \geq 1}} \frac{1}{f(\xi)v}  (q^{-d(\xi)}t)^{f(\xi)v}\right) \\
&\makebox[\eql][c]{$=$} \prod_{\xi \in \Xi^{\chi,\Theta}} (1-(q^{-a(\xi)}t)^{f(\xi)})^{-\frac{1}{f(\xi)}} \\
&\makebox[\eql][c]{$=$} \prod_{\bar{\xi} \in \Gamma \backslash \Xi^{\chi,\Theta}} \prod_{\xi \in \bar{\xi}} (1-(q^{-a(\xi)}t)^{f(\xi)})^{-\frac{1}{f(\xi)}} \\
&\makebox[\eql][c]{$\stackrel{\ref{nota}}{=}$} \prod_{\bar{\xi} \in \Gamma \backslash \Xi^{\chi,\Theta}}(1-(q^{-a(\bar{\xi})}t)^{f(\bar{\xi})})^{-1}. \qedhere
\end{align*}
\end{proof}

\begin{exa} \label{voorbeeld2}
We continue Example \ref{voorbeeld1}. Here ${}^I\hat{W}$ has four elements. Since $\hat{G}$ is split, the action of $\Gamma$ on ${}^I\hat{W}$ is trivial. Since $\Theta$ was trivial as well, we find that $\Gamma \backslash \Xi^{\chi,\Theta}$ is equal to ${}^I\hat{W}$. Since the Galois action is trivial, we find that $f(\xi) = 1$ for all $\xi \in \Xi^{\chi,\Theta}$. Furthermore one has $\recht{dim}(G/P^+) = 1$. From the table in Example \ref{voorbeeld1}, we find that ${}^I\hat{W}$ has two elements $w$ with $\ell_{I,J}(w) = 0$, and two elements with $\ell_{I,J}(w) = 1$; hence from the definition of $a$ in Notation \ref{nota} we get 
$$Z(\hat{G}\textrm{-}\mathsf{Zip}_{\mathbb{F}_p}^{\chi,\Theta},t) = \frac{1}{(1-t)^2(1-qt)^2}.$$
\end{exa}

\section{Stacks of truncated Barsotti--Tate groups} \label{sectieBT}

The aim of this section is to prove Theorem \ref{maintheorem2}. We fix integers $h > 0$ and $0 \leq d \leq h$, and we want to determine the zeta function of the stack $\mathsf{BT}_{n}^{h,d}$ over $\mathbb{F}_p$ for every integer $n \geq 1$. This turns out to be related to the theory of $\hat{G}$-zips and their moduli stacks.

\begin{nota} \label{truncnot}
For the rest of this section, let $G$ be the reductive group $\recht{GL}_{h, \mathbb{F}_p}$. Let $\chi\colon \mathbb{G}_{\recht{m},\mathbb{F}_p} \rightarrow G$ be a cocharacter that induces the weights $0$ with multiplicity $d$ and weight $1$ with multiplicity $h-d$ on the standard representation of $G$. Employing the notation of sections \ref{weylgroepen} and \ref{sectiegzips}, we see that $W$ is the permutation group on $h$ elements (with trivial Galois action), $S = \{(1 \ \ 2), (2 \ \ 3),...,(h-1 \ \ h)\}$, and $I = S \setminus \{(d \ \ d+1)\}$. Note that $\Theta$ has to be trivial, as we can regard it as a subgroup of $\Omega \cong \pi_0(G)$, which is trivial. Hence $\Xi := \Gamma \backslash \Xi^{\chi,\Theta}$ is equal to ${}^IW$, and the map $a\colon \Xi \rightarrow \mathbb{Z}_{\geq 0}$ from Notation \ref{nota} is given by $a(\xi) = \recht{dim}(G/P_+) - \ell(\xi) = d(h-d)-\ell(\xi)$.
\end{nota}

For general $n$, let $\mathsf{D}^{h,d}_n$ be the stack over $\mathbb{F}_p$ of truncated Dieudonné crystals $D$ of level $n$ that are locally of rank $h$, for which the map $F\colon D \rightarrow D^{(p)}$ has rank $d$ locally (see \cite[Remark 2.4.10]{dejong1995}). Then Dieudonné theory (see \cite[3.3.6 \& 3.3.10]{berthelotbreenmessing2006}) tells us that there is a morphism of stacks over $\mathbb{F}_p$
$$\mathbb{D}_n\colon \mathsf{BT}^{h,d}_{n} \rightarrow \mathsf{D}^{h,d}_n$$
that is an equivalence of categories over perfect fields; hence $Z(\mathsf{BT}^{h,d}_{n},t) = Z(\mathsf{D}^{h,d}_n,t)$. As such, we are interested in the categories $\mathsf{D}^{h,d}_n(\mathbb{F}_q)$. An object in this category is Dieudonné module of level $n$, i.e. a triple $(D,F,V)$ where:
\begin{enumerate}
\item $D$ is a free module of rank $h$ over $\recht{W}_n(\mathbb{F}_q)$, the Witt vectors of length $n$ over $\mathbb{F}_q$;
\item $F$ is a $\sigma$-linear endomorphism of $D$ of rank $d$, where $\sigma$ is the automorphism of $\recht{W}_n(\mathbb{F}_q)$ lifting the automorphism $\recht{Fr}_p$ of $\mathbb{F}_q$;
\item $V$ is a $\sigma^{-1}$-linear endomorphism of $D$ satisfying $FV = VF = p$.
\end{enumerate}

Now fix $h$ and $d$, and choose a (non-truncated) Barsotti--Tate group $\mathcal{G}$ of height $h$ and dimension $d$ over $\mathbb{F}_p$. Let $(D_n,F_n,V_n)$ be the Dieudonné module of $\mathcal{G}[p^n]$, and choose a basis for every $D_n$ in a compatible manner (i.e. the basis of $D_n$ is the image of the basis of $D_{n+1}$ under the natural reduction map $D_{n+1}/p^nD_{n+1} \stackrel{\sim}{\rightarrow} D_n$). Then for every power $q$ of $p$, every element in $\mathsf{D}_n^{h,d}(\mathbb{F}_q)$ is isomorphic to $D_{n,g} := (D_n \otimes_{\mathbb{Z}/p^n\mathbb{Z}} \recht{W}_n(\mathbb{F}_q),gF_n,V_ng^{-1})$ for some $g \in \recht{GL}_h(\recht{W}_n(\mathbb{F}_q))$ (See \cite[2.2.2]{vasiu2008}).\\

For a smooth affine group scheme $G$ over $\recht{Spec}(\recht{W}(\mathbb{F}_p))$, let $\mathbb{W}_n(G)$ be the group scheme over $\recht{Spec}(\mathbb{F}_p)$ defined by $\mathbb{W}_n(G)(R) = G(\recht{W}_n(R))$ (see \cite[2.1.4]{vasiu2008}); it is again smooth and affine. For every $n$ there is a natural reduction morphism $\mathbb{W}_{n+1}(G) \rightarrow \mathbb{W}_n(G)$.

\begin{prop}
Let $\mathcal{D}_n := \mathbb{W}_n(\recht{GL}_h)$. Then there exists a smooth affine group scheme $\mathcal{H}$ over $\mathbb{Z}_p$ and for every $n$ an action of $\mathcal{H}_n := \mathbb{W}_n(\mathcal{H})$ on $\mathcal{D}_n$, compatible with the reduction maps $\mathcal{H}_{n+1} \rightarrow \mathcal{H}_n$ and $\mathcal{D}_{n+1} \rightarrow \mathcal{D}_n$, such that for every power $q$ of $p$, there exists for every $g,g' \in \mathcal{D}_n(\mathbb{F}_q)$ an isomorphism of $\mathbb{F}_q$-group varieties
$$\varphi_{g,g'}\colon\recht{Transp}_{\mathcal{H}_{n,\mathbb{F}_q}}(g,g')^{\recht{red}} \stackrel{\sim}{\rightarrow}  \recht{Isom}(D_{n,g},D_{n,g'})^{\recht{red}}$$
that is compatible with compositions in the sense that for every $g,g',g'' \in \mathcal{D}_n(\mathbb{F}_q)$ the following diagram commutes, where the horizontal maps are the natural composition morphisms:

\begin{center}
\begin{tikzcd}
\recht{Transp}_{\mathcal{H}_{n,\mathbb{F}_q}}(g,g')^{\recht{red}} \times \recht{Transp}_{\mathcal{H}_{n,\mathbb{F}_q}}(g',g'')^{\recht{red}} \arrow[r] \arrow[d,"\varphi_{g,g'} \times \varphi_{g',g''}"] & \recht{Transp}_{\mathcal{H}_{n,\mathbb{F}_q}}(g,g'')^{\recht{red}} \arrow[d,"\varphi_{g,g''}"] \\
\recht{Isom}(D_{n,g},D_{n,g'})^{\recht{red}} \times \recht{Isom}(D_{n,g'},D_{n,g''})^{\recht{red}} \arrow[r] & \recht{Isom}(D_{n,g},D_{n,g''})^{\recht{red}}
\end{tikzcd}
\end{center}

\end{prop}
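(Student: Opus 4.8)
The plan is to construct the group scheme $\mathcal{H}$ as the "Witt-lift" of the automorphism group of the universal Dieudonné structure on $\mathrm{GL}_h$, following the philosophy of \cite{vasiu2008}: one first builds everything over $\mathbb{Z}_p$ (or $\mathrm{W}(\mathbb{F}_p)$) and then applies the functor $\mathbb{W}_n(-)$, using that $\mathbb{W}_n$ is compatible with fibre products, closed immersions and transporters. Concretely, I would let $\mathcal{H}$ be the smooth affine $\mathbb{Z}_p$-group scheme whose $R$-points (for $R$ a $p$-adic ring) are pairs $(g_+,g_-) \in \mathrm{GL}_h(R) \times \mathrm{GL}_h(R)$ satisfying a compatibility condition mirroring the relation $F\mapsto gFg^{-1}$, $V\mapsto Vg^{-1}$; the action on $\mathcal{D} = \mathrm{GL}_h$ is $(g_+,g_-)\cdot g = g_+ \, g \, \sigma(g_-)^{-1}$ (or the appropriate variant depending on how $\sigma$ is encoded), chosen precisely so that $\mathrm{Transp}_{\mathcal{H}}(g,g')$ matches isomorphisms $D_g \to D_{g'}$. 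One then sets $\mathcal{H}_n = \mathbb{W}_n(\mathcal{H})$ and $\mathcal{D}_n = \mathbb{W}_n(\mathrm{GL}_h)$; smoothness and affineness are preserved by \cite[2.1.4]{vasiu2008}, and the reduction maps $\mathcal{H}_{n+1}\to\mathcal{H}_n$, $\mathcal{D}_{n+1}\to\mathcal{D}_n$ are induced functorially.

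The heart of the argument is the construction of the isomorphism $\varphi_{g,g'}$. For $g,g' \in \mathcal{D}_n(\mathbb{F}_q) = \mathrm{GL}_h(\mathrm{W}_n(\mathbb{F}_q))$, an element of $\mathrm{Isom}(D_{n,g},D_{n,g'})$ over an $\mathbb{F}_q$-algebra $R$ is an element $\alpha \in \mathrm{GL}_h(\mathrm{W}_n(R))$ intertwining $(gF_n, V_ng^{-1})$ with $(g'F_n, V_n g'^{-1})$; unwinding the intertwining relations against $\sigma$ gives exactly an equation of the form $\alpha \, g \, \sigma(\alpha)^{-1}$-conjugacy, i.e. $\alpha \in \mathrm{Transp}_{\mathcal{H}_n}(g,g')(R)$ after recording the $\sigma$-twist. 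The key points to check are: (i) that these functors are representable by affine schemes of finite type over $\mathbb{F}_q$ — this is automatic since they are closed subschemes of $\mathbb{W}_n(\mathrm{GL}_h)$ cut out by the intertwining equations; (ii) that passing to reduced subschemes is harmless because $\mathbb{F}_q$ is perfect, so $(-)^{\mathrm{red}}$ is again a smooth (indeed, here a group-variety) closed subscheme and commutes with the relevant operations; and (iii) compatibility with composition, which is a formal consequence of the fact that $\varphi_{g,g'}$ is literally "the same element $\alpha$, reinterpreted", so the displayed square commutes on the nose before taking $(-)^{\mathrm{red}}$, hence after.

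The main obstacle, I expect, is the bookkeeping around the $\sigma$-semilinearity: $F_n$ is $\sigma$-linear and $V_n$ is $\sigma^{-1}$-linear for the automorphism $\sigma$ of $\mathrm{W}_n(\mathbb{F}_q)$, and the Witt-vector Frobenius does not arise from an $\mathbb{F}_p$-morphism of $\mathbb{W}_n(\mathrm{GL}_h)$ in the naive way, so one must be careful to phrase the action of $\mathcal{H}_n$ on $\mathcal{D}_n$ entirely in terms of $\mathbb{F}_p$-scheme morphisms (absorbing $\sigma$ into the definition of the action map, exactly as \cite[2.2.2]{vasiu2008} does) rather than as a literal "twisted conjugation" that would only make sense pointwise. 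Once the action is set up $\mathbb{F}_p$-rationally and compatibly in $n$, the identification of transporters with isomorphism schemes is a direct matrix computation, and functoriality in $R$ together with compatibility with the reduction maps $\mathcal{D}_{n+1}\to\mathcal{D}_n$ follows because the whole package descends from the single $\mathbb{Z}_p$-object $(\mathcal{H}\curvearrowright \mathrm{GL}_h)$. I would also invoke \cite[Theorem 5]{rosenlicht1963} only later (in the zeta-function computation), not here; for this proposition smoothness of $\mathcal{H}$ over $\mathbb{Z}_p$ is what guarantees $\mathcal{H}_n$ is smooth, which is needed so that $\mathcal{H}_n(\mathbb{F}_q)$ and the relevant reduced transporters behave well under Lang's theorem in the sequel.
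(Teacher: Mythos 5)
Your proposal follows essentially the same route as the paper: both defer the construction of $\mathcal{H}$ and of the action of $\mathcal{H}_n = \mathbb{W}_n(\mathcal{H})$ on $\mathcal{D}_n = \mathbb{W}_n(\recht{GL}_h)$ to \cite[2.1 \& 2.2]{vasiu2008}, note that these definitions descend to $\mathbb{F}_p$ compatibly with the reduction maps, and then identify the reduced transporters with the reduced isomorphism schemes, the commutativity of the composition square being formal. The one point where you diverge slightly is that you assert the identification $\varphi_{g,g'}$ by ``direct matrix computation,'' whereas the paper first obtains the isomorphism $\varphi_{g,g}$ on $\bar{\mathbb{F}}_p$-points from \cite[2.4(b)]{vasiu2008} and then upgrades the morphism $\recht{Transp}_{\mathcal{H}_{n,\mathbb{F}_q}}(g,g')^{\recht{red}} \rightarrow \recht{Isom}(D_{n,g},D_{n,g'})^{\recht{red}}$ to an isomorphism by equivariance under $\recht{Stab}_{\mathcal{H}_n}(g) \cong \recht{Isom}(D_{n,g})$ together with the fact that both sides are torsors — a step worth retaining, since surjectivity of the transporter onto all isomorphisms is not evident from unwinding the intertwining relations alone.
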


\begin{proof}
The group $\mathcal{H}$ and the action $\mathcal{H}_n \times \mathcal{D}_n \rightarrow \mathcal{D}_n$ are defined in \cite[2.1.1 \& 2.2]{vasiu2008} over an algebraically closed field $k$ of characteristic $p$, but the definition still makes sense over $\mathbb{F}_p$. The isomorphism of groups $\varphi_{g,g}$ is given on $k$-points in \cite[2.4(b)]{vasiu2008}. The definition of the map there shows that it is algebraic and defined over $\mathbb{F}_p$. Since it is an isomorphism on $\bar{\mathbb{F}}_p$-points, it is an isomorphism of reduced group schemes over $\mathbb{F}_p$. Furthermore, a morphism $\recht{Transp}_{\mathcal{H}_{n,\mathbb{F}_q}}(g,g') \rightarrow  \recht{Isom}(D_{n,g},D_{n,g'})$ is given in the proof of \cite[2.2.1]{vasiu2008}. It is easily seen that this map is compatible with compositions in the sense of the diagram above, and that it is equivariant under the action of $\recht{Stab}_{\mathcal{H}_n}(g)(\bar{\mathbb{F}}_p) \cong \recht{Isom}(D_{n,g})(\bar{\mathbb{F}}_p) $. Since both varieties are torsors under this action, this must be an isomorphism as well.
\end{proof}

\begin{cor} \label{dieudonneequiv}
For every power $q$ of $p$ the categories $\mathsf{D}_n^{h,d}(\mathbb{F}_q)$ and $[\mathcal{H}_n \backslash \mathcal{D}_n](\mathbb{F}_q)$ are equivalent.
\end{cor}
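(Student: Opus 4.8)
The plan is to describe both $\mathsf{D}_n^{h,d}(\mathbb{F}_q)$ and $[\mathcal{H}_n\backslash\mathcal{D}_n](\mathbb{F}_q)$ as \emph{action groupoids} on the common object set $\mathcal{D}_n(\mathbb{F}_q)$ and to read off the equivalence from the compatible family $\varphi_{g,g'}$ supplied by the previous proposition. Concretely, let $\mathcal{G}_1$ be the groupoid with object set $\mathcal{D}_n(\mathbb{F}_q)$, with $\Hom_{\mathcal{G}_1}(g,g') = \{h\in\mathcal{H}_n(\mathbb{F}_q): h\cdot g = g'\}$ and composition given by multiplication in $\mathcal{H}_n(\mathbb{F}_q)$, and let $\mathcal{G}_2$ be the groupoid with the same object set, $\Hom_{\mathcal{G}_2}(g,g') = \recht{Isom}(D_{n,g},D_{n,g'})(\mathbb{F}_q)$ and composition induced by composition of morphisms of Dieudonné modules. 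The claim will follow once we produce equivalences $[\mathcal{H}_n\backslash\mathcal{D}_n](\mathbb{F}_q)\simeq\mathcal{G}_1$ and $\mathcal{G}_2\simeq\mathsf{D}_n^{h,d}(\mathbb{F}_q)$, together with an isomorphism $\mathcal{G}_1\cong\mathcal{G}_2$.

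For the first equivalence I would use Lang's theorem: $\mathcal{H}_n = \mathbb{W}_n(\mathcal{H})$ is a smooth affine \emph{connected} algebraic group over $\mathbb{F}_p$ — it fibres over $\mathcal{H}_{\mathbb{F}_p}$ with kernel a smooth connected unipotent group coming from the Witt-vector filtration — so every $\mathcal{H}_n$-torsor over $\mathbb{F}_q$ is trivial. Hence every object of $[\mathcal{H}_n\backslash\mathcal{D}_n](\mathbb{F}_q)$ is isomorphic to one of the form $(\mathcal{H}_{n,\mathbb{F}_q},f_g)$ with $g := f_g(e)\in\mathcal{D}_n(\mathbb{F}_q)$, and, exactly as in Remark~\ref{zetaopm}, a morphism $(\mathcal{H}_{n,\mathbb{F}_q},f_g)\to(\mathcal{H}_{n,\mathbb{F}_q},f_{g'})$ is the same datum as an $h\in\mathcal{H}_n(\mathbb{F}_q)$ with $h\cdot g = g'$; this gives $[\mathcal{H}_n\backslash\mathcal{D}_n](\mathbb{F}_q)\simeq\mathcal{G}_1$. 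For the second equivalence, \cite[2.2.2]{vasiu2008} says every object of $\mathsf{D}_n^{h,d}(\mathbb{F}_q)$ is isomorphic to some $D_{n,g}$ with $g\in\mathcal{D}_n(\mathbb{F}_q)$, while by construction of the stack $\mathsf{D}_n^{h,d}$ the morphisms $D_{n,g}\to D_{n,g'}$ in $\mathsf{D}_n^{h,d}(\mathbb{F}_q)$ are precisely the $\mathbb{F}_q$-points of $\recht{Isom}(D_{n,g},D_{n,g'})$; so $g\mapsto D_{n,g}$ is a fully faithful, essentially surjective functor $\mathcal{G}_2\to\mathsf{D}_n^{h,d}(\mathbb{F}_q)$.

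It remains to identify $\mathcal{G}_1$ with $\mathcal{G}_2$. Since $\mathbb{F}_q$ is reduced we have $Y(\mathbb{F}_q) = Y^{\recht{red}}(\mathbb{F}_q)$ for every $\mathbb{F}_q$-scheme $Y$, and $\recht{Transp}_{\mathcal{H}_{n,\mathbb{F}_q}}(g,g')(\mathbb{F}_q) = \{h\in\mathcal{H}_n(\mathbb{F}_q): h\cdot g = g'\} = \Hom_{\mathcal{G}_1}(g,g')$; as an isomorphism of $\mathbb{F}_q$-varieties is bijective on $\mathbb{F}_q$-points, the previous proposition yields bijections $\varphi_{g,g'}\colon\Hom_{\mathcal{G}_1}(g,g')\xrightarrow{\sim}\Hom_{\mathcal{G}_2}(g,g')$. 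Evaluating the commutative square of that proposition on $\mathbb{F}_q$-points shows these bijections are compatible with composition, so $(g\mapsto g,\ \{\varphi_{g,g'}\})$ is a functor $\mathcal{G}_1\to\mathcal{G}_2$ that is the identity on objects and bijective on every Hom-set, hence an isomorphism of groupoids. Chaining the three gives $\mathsf{D}_n^{h,d}(\mathbb{F}_q)\simeq\mathcal{G}_2\cong\mathcal{G}_1\simeq[\mathcal{H}_n\backslash\mathcal{D}_n](\mathbb{F}_q)$.

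The one genuinely non-formal input is the collapse of $[\mathcal{H}_n\backslash\mathcal{D}_n](\mathbb{F}_q)$ to the action groupoid $\mathcal{G}_1$, which rests on the triviality of all $\mathcal{H}_n$-torsors over $\mathbb{F}_q$, i.e. on $\mathcal{H}_n$ being connected together with Lang's theorem; so I would make sure this connectedness is pinned down (if one were unwilling to use it, one would instead have to carry $\pi_0(\mathcal{H}_n)$ along and argue as in the lemma preceding the proof of Theorem~\ref{zetaquotient}). Everything else is bookkeeping already carried out for us in the statement of the previous proposition — one only has to check that its Hom-sets, its composition law and its compatibility diagram become, on $\mathbb{F}_q$-points, exactly the structure of $\mathcal{G}_1$ and $\mathcal{G}_2$ above.
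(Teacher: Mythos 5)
Your proof is correct and follows essentially the same route as the paper: the paper defines a single functor $D \mapsto (\mathcal{H}_n, h \mapsto h\cdot g_D)$ and checks it is fully faithful (via the $\varphi_{g,g'}$ of the preceding proposition, evaluated on $\mathbb{F}_q$-points) and essentially surjective, which is exactly your chain $\mathsf{D}_n^{h,d}(\mathbb{F}_q)\simeq\mathcal{G}_2\cong\mathcal{G}_1\simeq[\mathcal{H}_n\backslash\mathcal{D}_n](\mathbb{F}_q)$ unpacked. Your only addition is to make explicit the triviality of $\mathcal{H}_n$-torsors over $\mathbb{F}_q$ (connectedness of $\mathcal{H}_n$ plus Lang), which the paper's essential-surjectivity claim uses silently; that is a fair point to pin down but not a different argument.
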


\begin{proof}
For every object $D \in \mathsf{D}_n^{h,d}(\mathbb{F}_q)$ choose a $g_D \in \mathcal{D}_n(\mathbb{F}_q)$ such that $D \cong D_{n,g_D}$. Define a functor

$$E\colon \mathsf{D}_n^{h,d}(\mathbb{F}_q) \rightarrow [\mathcal{H}_n \backslash \mathcal{D}_n](\mathbb{F}_q)$$
that sends a $D$ to the pair $(\mathcal{H}_n, f_D)$, where $f_D\colon \mathcal{H}_n \rightarrow \mathcal{D}_n$ is given by $f_D(h) = h \cdot g_D$. We send an isomorphism from $D$ to $D'$ to the corresponding element of 
$$\recht{Isom}((\mathcal{H}_n, f_D),(\mathcal{H}_n, f_{D'})) = \recht{Transp}_{\mathcal{H}_n(\mathbb{F}_q)}(g_D,g_{D'}).$$ 
This functor is fully faithful and essentially surjective, hence an equivalence of categories.
\end{proof}

By \cite[9.18, 8.3 \& 3.21]{pinkwedhornziegler2015} (and before by \cite{kraft1975} and \cite{moonen2001}) the set of isomorphism classes of Dieudonné modules of level 1 over an algebraically closed field of characteristic $p$ are classified by ${}^IW$, where $I$ and $W$ are as in Notation \ref{truncnot}. For each $w \in {}^IW$, let $\mathsf{D}_{n}^{h,d,w}$ be the substack of $\mathsf{D}_{n}^{h,d}$ consisting of truncated Barsotti--Tate groups of level $n$, locally of rank $h$, and with $F$ locally of rank $d$, that are of type $w$ at all geometric points. Then over fields $k$ of characteristic $p$ one has $\mathsf{D}_{n}^{h,d}(k) = \bigsqcup_{w \in {}^IW} \mathsf{D}_{n}^{h,d,w}(k)$ as categories, hence
$$Z(\mathsf{D}_{n}^{h,d},t) = \prod_{w \in {}^IW} Z(\mathsf{D}_{n}^{h,d},t).$$

From Proposition \ref{rekenopm}.1, or directly from the description in \cite[§5]{kraft1975}, each isomorphism class over $\bar{\mathbb{F}}_p$ has a model over $\mathbb{F}_p$. Let $g_{1,w} \in \mathcal{D}_1(\mathbb{F}_p)$ be such that the isomorphism class of $D_{1,g_{1,w}} \otimes \bar{\mathbb{F}}_p$ corresponds to $w \in {}^IW$. For every $n$, let $\mathcal{D}_{n,w}$ be the preimage of $g_{1,w}$ under the reduction map $\mathcal{D}_n \rightarrow \mathcal{D}_1$. Let $\mathcal{H}_{n,w}$ be the preimage of $\recht{Stab}_{\mathcal{H}_1}(g_{1,w})$ in $\mathcal{H}_n$; then analogous to Corollary \ref{dieudonneequiv} for every power $q$ of $p$ we get an equivalence of categories (see \cite[3.2.3 Lemma 2(b)]{gabbervasiu2012})
$$\mathsf{D}_{n}^{h,d,w}(\mathbb{F}_q) \cong [\mathcal{H}_{n,w}\backslash \mathcal{D}_{n,w}](\mathbb{F}_q).$$

\begin{proof}[Proof of Theorem \ref{maintheorem2}] By the discussion above we see that 
$$Z(\mathsf{BT}^{h,d}_{n},t) = \prod_{w \in {}^IW} Z([\mathcal{H}_{n,w}\backslash \mathcal{D}_{n,w}],t).$$
By \cite[9.18 \&  8.3]{pinkwedhornziegler2015} there is an isomorphism of stacks over $\mathbb{F}_p$
$$\mathsf{D}_{1,p}^{h,d} \stackrel{\sim}{\rightarrow} G\textrm{-}\mathsf{Zip}^{\chi,\Theta}_{\mathbb{F}_p},$$
where $G,\chi,\Theta$ are as in Notation \ref{truncnot}. By Proposition \ref{rekenopm}.2, or earlier by \cite[2.1.2(i) \& 2.2.6]{moonen2004}, the group scheme $\recht{Stab}_{\mathcal{H}_1}(g_w)^{\recht{red}} \cong \recht{Aut}(D_{1,g_1})^{\recht{red}}$ has a identity component that is unipotent of dimension $a(w)$. The reduction morphism $\mathcal{H}_n \rightarrow \mathcal{H}_1$ is surjective and its kernel is unipotent of dimension $h^2(n-1)$, see \cite[3.1.1 \& 3.1.3]{gabbervasiu2012}. This implies that $\mathcal{H}_{n,w}$ has a unipotent identity component of dimension $h^2(n-1)+a(w)$. Now fix a $g_{n,w} \in \mathcal{D}_{n,w}(\mathbb{F}_p)$; then we can identify $\mathcal{D}_{n,w}$ with the affine group $X = \mathbb{W}_{n-1}(\recht{Mat}_{h \times h})$, by sending an $x \in X$ to $g_{n,w}+ps(x)$, where $s\colon \mathbb{W}_{n-1}(\recht{Mat}_{h \times h}) \stackrel{\sim}{\rightarrow} p\mathbb{W}_{n}(\recht{Mat}_{h \times h}) \subset \mathbb{W}_{n}(\recht{Mat}_{h \times h})$ is the canonical identification. Furthermore, the action of an element $h \in \mathcal{H}_{n,w}$ on $(g_{n,w}+ps(x)) \in \mathcal{D}_{n,w}$ is given by $f(h)(g_{n,w}+ps(x))f'(h)$ for some $f,f'\colon \mathcal{H}_{n,w} \rightarrow \mathbb{W}_n(\recht{GL}_h)$ (see  \cite[2.2.1a]{vasiu2008}). From this we see that the induced action of $\mathcal{H}_{n,w}$ on the variety $X$ is given by
$$h \cdot x = f(h)xf'(h) + \frac{1}{p}(f(h)g_{n,w}f'(h)-g_{n,w}),$$
which makes sense because $f(h)g_{n,w}f'(h)$ is equal to $g_{n,w}$ modulo $p$. If we regard $X$ as $\mathbb{W}_{n-1}(\mathbb{G}_{a}^{h^2})$ via its canonical coordinates, this shows us that the action of $\mathcal{H}_{n,w}$ on $X$ factors through the canonical action of $\mathbb{W}_{n-1}(\mathbb{G}_{a}^{h^2}) \rtimes \mathbb{W}_{n-1}(\recht{GL}_{h^2})$ on $\mathbb{W}_{n-1}(\mathbb{G}_{a}^{h^2})$. This algebraic group is connected, so we can apply Theorem \ref{affienquotient}, from which we find
$$Z([\mathcal{H}_{n,w}\backslash \mathcal{D}_{n,w}],t) = \frac{1}{1-p^{\recht{dim}(\mathcal{D}_{n,w})-\recht{dim}(\mathcal{H}_{g,w})}} = \frac{1}{1-p^{h^2(n-1)- (h^2(n-1)+a(w))}t} = \frac{1}{1-p^{-a(w)}t},$$
which completes the proof.
\end{proof}

\begin{rem}
Since the zeta function $Z(\mathsf{BT}_{n}^{h,d},t)$ does not depend on $n$, one might be tempted to think that the stack $\mathsf{BT}^{h,d}$ of non-truncated Barsotti--Tate groups of height $h$ and dimension $d$ has the same zeta function. However, this stack is not of finite type. For instance, every Barsotti--Tate group $\mathcal{G}$ over $\mathbb{F}_q$ has a natural injection $\mathbb{Z}_p^{\times} \hookrightarrow \recht{Aut}(\mathcal{G})$, which shows us that its zeta function is not well-defined.
\end{rem}

\DeclareRobustCommand{\VAN}[3]{#3}

\printbibliography
\end{document}